\def\restrict#1#2
\def\labelledto#1
\newcommand\id{\mathop{\mathrm{id}}}
\newcommand\GSL{\mathbf{SL}}		
\newcommand\Grass{\mathbf{Gr}}		
\newcommand\GP{\mathbf{P}}
\newcommand\Orbit{\mathcal{O}}		
\newcommand\Nilpotent{\mathcal{N}}	
\newcommand\Regular{\mathcal{O}}	
\newcommand\Bouquet{\mathcal{X}/(F,h)}
\newcommand\Ga{\mathfrak{a}}
\newcommand\Gb{\mathfrak{b}}
\newcommand\Gm{\mathfrak{m}}		
\newcommand\Gg{\mathfrak{g}}
\newcommand\Gh{\mathfrak{h}}
\newcommand\Gs{\mathfrak{s}}
\newcommand\Gu{\mathfrak{u}}
\newcommand\Lie{\mathop{\mathcal{L}}}
\newcommand\SGrp{\Gamma}		
\newcommand\SGrs{\Grass(k,\Gg)}		
\newcommand\SLie{\Lambda}		
\newcommand\SAlg{\Alpha}		
\newcommand\UFam{\mathfrak{H}}		
\newcommand\Ufam{\mathfrak{h}}		
\newcommand\UMod{\mathfrak{P}}		
\newcommand\FFam{\mathcal{H}}
\newcommand\FMod{P}
\newcommand\VFam{\mathcal{V}}		
\newcommand\Tautological{\mathcal{T}}
\begin{document}
\author{Michaël Le Barbier Grünewald}
\address{%
Michaël Le Barbier Grünewald\\
Max-Planck-Institut für Mathematik\\
Vivatsgasse 7\\
$53\,111$ Bonn
}
\email{michi@mpim-bonn.mpg.de}
\urladdr{http://guests.mpim-bonn.mpg.de/michi}
\keywords{Moduli Space, Complex Algebraic Groups}
\title{Universal family of the subgroups of an algebraic group}
\date{30 October 2008}
\begin{abstract}
We construct a moduli space for the connected subgroups of an
algebraic group and the corresponding universal family. Morphisms from
an algebraic variety to this moduli space correspond to flat families
of connected algebraic subgroups parametrised by this variety. This
moduli space is obtained by gluing together infinitely many
irreducible projective varieties of bounded dimension along closed subvarieties.

Regarding families of non-connected subgroups of an algebraic group,
we show that, given sich a family, the corresponding family of
identity components is an irreducible component of the former, and the
quotient of a family of groups group by the family of their identity
components exists.
\end{abstract}

\maketitle
\section{Introduction}
Let~$G$ be a complex connected algebraic group,
a flat family of subgroups of~$G$ parametrised by a variety~$\FMod$ is a
subvariety~$\FFam\subset G \times \FMod$ such that the
projection~$\FFam\to\FMod$ is a flat morphism and whose scheme-theoretic
fiber~$\FFam_p$ at any~$p\in\FMod$ is a group subscheme of~$G$.
We construct a moduli space and a universal family for the connected subgroups
of a complex algebraic group and discuss a few examples: tori, abelian
varieties, $\GSL_2$ and~$\GSL_3$.

\subsubsection{Notations}
Let~$G$ be a complex connected algebraic group, and~$\Gg$ its Lie algebra. We
fix an integer~$k$ and denote by~$\SGrp$ the set of~$k$-dimensional connected
subgroups of~$G$. A connected subgroup~$H$ of~$G$ is determined by its Lie
algebra~$\Lie(H)$, as a subalgebra of~$\Gg$, so we denote by~$\SGrs$ the
Grassmann variety of~$k$-planes in~$\Gg$, by~$\SLie \subset \SGrs$ the variety
of $k$-dimensional Lie subalgebras of~$\Gg$ and by~$\SAlg$ the set
\[
\SAlg = \set{ \Lie(H) \mid H \in \SGrp }
\]
of \emph{algebraic} $k$-dimensional Lie subalgebras of~$G$. The terminology
\emph{algebraic} was introduced by~Chevalley~\cite{CHEVALLEYLIE2}. We will take
advantage in our construction of the natural operation of~$G$ on these sets.

\subsubsection{Main results and structure}
It belongs to moduli space problems that
we have to allow more general objects
than algebraic varieties in order to obtain a satisfying construction. Namely,
we introduce the category of \emph{bouquets of algebraic
varieties}~(\ref{ss:mobouquet}), that contains all algebraic varieties and
objects obtained by gluing together possibly infinitely many algebraic
varieties along closed subvarieties. These bouquets are similar to
\emph{ind-varieties} introduced by~Shafarevich~\cite{IND1} and~Kumar~\cite{IND2}.

We describe in Section~\ref{se:mo} a structure~$\UMod$ of bouquet on~$\SGrp$ and
show that the set-theoretic universal family
\[
\UFam = \set{ (H,g) \in G\times\UMod \mid g \in H }
\]
is actually a bouquet-theoretic universal
family~(\ref{th:main2}). The irreducible components of~$\UMod$ are projective
varieties and the ones of~$\UFam$ are quasi-projective.

\begin{theorem*}[\ref{th:main2}]
Let~$\FMod$ be an algebraic variety, and denote by
\begin{enumerate}
\item
$F$ the set of flat families of $k$-dimensional connected subgroups of~$G$ parametrised
by~$\FMod$;
\item
$M$ the set of morphisms from~$\FMod$ to~$\UMod$.
\end{enumerate}
Then the map~$F\to M$ sending a family~$\FFam$ to the morphism to~$\SLie$
mapping~$p\in\FFam$ to~$\Lie(\FFam_p)$ and the map from~$M \to F$
sending a morphism~$\psi$ to the family~$\psi^*\UFam$ define a natural
correspondance between~$F$ and~$M$.
\end{theorem*}

The bouquet structure on~$\UMod$ is \emph{a posteriori} defined
by the fact that~$\UFam$ is a final object in the category of flat
families of~$k$-dimensional subgroups of~$G$. The Lie functor~$\Lie$
puts~$\SGrp$ and~$\SAlg$ in a one-to-one correspondence, but
the topology induced by~$\SGrs$ on~$\SGrp$ is
much coarser than it needs to be, with respect to the universal
property of~$\UFam$. From the three stages we need to go through
before get to the proof of Theorem~\ref{th:main2}---the construction of the moduli
space~$\UMod$, of the universal family~$\UFam$ and the proof of the
universal property---the second one draws most of our efforts. For this we
need to show the

\begin{theorem*}[\ref{th:main1}]
Let~$\FMod$ be an irreducible constructible subset of~$\SLie$. If~$\FMod$ is
contained in~$\SAlg$, then the closure~$\bar\FMod$ of~$\FMod$ in~$\SLie$ is
also contained in~$\SAlg$ and~$\UFam(\bar\FMod)$ equals~$\bar\UFam(\FMod)$.
\end{theorem*}

Note that restriction of the
tautological family~$\UFam(\bar\FMod)$ mentioned in the statement of our
Theorem~\ref{th:main1} is flat. The tautological family can be described
through to the exponential map of~$G$:

\begin{corollary*}[\ref{co:main1}]
The set~$\exp\times\id_{\SLie}(\Ufam(\bar\FMod))$ is
actually~$\UFam(\bar\FMod)$, it is in particular algebraic.
\end{corollary*}

This Corollary is actually equivalent to Theorem~\ref{th:main1}, and we first
thought at it as an intermediary result on the road to
Theorem~\ref{th:main1}. However we were not able to overcome the difficulties
tied to the analytic nature of the exponential map, and had to put things
the other way around. Section~\ref{se:di} is devoted to its proof,
that is built on two principles: First we do not need to show that all
of~$\UFam(\bar\FMod)$ is an algebraic variety but we are allowed to
work on a smaller set of parameters (see~\ref{ss:intgr}). Second we
observe that some discrete invariants of groups behave
semi-continuously in families (see~\ref{ss:malcev}
and~\ref{ss:resolv}). This observation allows us to find the suitable
small subset of parameters over which we are able to prove the
algebraicity of the set-theoretic family.

\medskip
While representation theory of algebraic groups provide us with a
wealth of examples of families of algebraic groups (see
Example~\ref{ex:gform}) it is unwise to expect them having irreducible
fibres (see Example~\ref{ex:reduced}) even in the case of the family
is a connected variety. In Section~\ref{se:ic}, we first show that the
set-theoretic family of subgroups of~$G$ deduced from an algebraic one
by replacing each fibre by its identity component is algebraic as
well~\ref{co:clcc}. Second, we show how to construct a quotient of a
family of subgroups by the family of its identity
components~\ref{co:stein}.

\subsubsection*{Acknowledgement}
The work presented here was done during my stay at the Max-Planck-Institut für
Mathematik. It is to me a pleasure to thank this kind institution.


\tableofcontents
\section{Elementary properties of families of subgroups}
\subsubsection{Definitions and examples}

The definition of family of subgroups that we introduce is a close variation
on the definition of families of subvarieties
(see~\cite[III.9.10]{H}). However, we allow disconnected fibres. We then show
two results allowing us to construct new families of subgroups, by pushing a
family forward or backward through a morphism or intersecting its fibres with a fixed
group. Less elementary construction theorems must be deferred until we have
gathered the necessary prerequisites~(section~\ref{se:di}).

\begin{definition}
Let~$X$ and~$\FMod$ be two algebraic varieties.
A family of subschemes of~$X$ parametrised by~$\FMod$ is a
subvariety~$\VFam$ of~$X \times \FMod$ such that the morphism~$\pi: \VFam \to
\FMod$ obtained by restricting the projection of~$X\times \FMod$
on~$\FMod$ to~$\VFam$ is surjective. The fibre~$\VFam_p$ of the family at~$p$
is the subscheme~$\inv\pi(p)$ of~$X$.

The family is \emph{irreducible} when~$\VFam$ is irreducible.
The family is~\emph{reduced} at~$p\in\FMod$ if the maximal ideal~$\Gm_p$
of~$p\in\FMod$ generates the ideal of~$\VFam_p$. The family is reduced when all
its fibre are reduced. The family is~\emph{flat} when the morphism~$\pi$ is
flat.
\end{definition}

We often refer to a subset~$\VFam$ of~$X \times\FMod$ under the term
``set-theoretic family'' when it needs not to be a
subvariety of~$G\times\FMod$. In this context we refer to the families defined above under
the term ``algebraic family.''

\begin{definition}
Let~$G$ be an algebraic group and~$\FMod$ a variety. A family~$\FFam$ of
loose subgroups of~$G$ is a family of subschemes of~$G$ such that each
fibre~$\FFam_p$ is supported by an algebraic subgroup of~$G$. A family of
subgroups of~$G$ is a loose family whose fibres are reduced.
\end{definition}

A family~$\FFam$ of loose subgroups of~$G$ parametrised by~$\FMod$ is trivial
when it is of the form~$H \times P$, where~$H$ is a subgroup of~$G$. Such a
family is reduced.

We denote by~$\iota: G \to G$ the inverse map of~$G$, and by~$\mu:G\times G \to G$
its product map. These morphisms extend to families: we define
\[
\fun \iota_\FMod\:
G \times \FMod \to G\times\FMod\\
(g,p)\mapsto(\iota(g), p)\\
\qquad
\text{and}
\qquad
\fun \mu_\FMod\:
G \times G \times \FMod\to G\times\FMod\\
(g_1,g_2,p)\mapsto(\mu(g_1,g_2), p)
.
\\
\]
If~$\FFam$ is a family of
loose subgroups of~$G$ parametrised by a variety~$\FMod$, we denote by~$\FFam
\times_\FMod \FFam$ the fibred Cartesian square of~$\FFam$, it is a family of
loose subgroups of~$G \times G$ parametrised by~$\FMod$ whose fibre at~$p$
is~$\FFam_p\times\FFam_p$. The morphism~$\iota_\FMod$ restricts
to~$\FFam\to\FFam$ and the morphism~$\mu_\FMod$
to~$\FFam\times_\FMod\FFam\to\FFam$.

\begin{example}[General form of families]
\label{ex:gform}
Let~$X$ and~$\FMod$ be algebraic varieties and~$\VFam$ a family of subschemes
of~$X$ parametrised by~$\FMod$. The stabiliser~$G_\VFam$ of~$\VFam$
in~$G$ defined by
\[
G_\VFam = \set {
 (g,p)\in G \times \FMod
 \mid
   \forall x\in \VFam_p \quad
   gx\in \VFam_p
}
\]
is a family of subgroups of~$G$ parametrised by~$\FMod$ and each family
of subgroups of~$G$ can be presented as a stabiliser.
\end{example}

\begin{example}[A connected and reducible family]
\label{ex:reduced}
Let~$T$ be a torus in~$G = \SL_2$ and~$U$ a unipotent subgroup of~$\SL_2$ such
that~$TU$ is a Borel subgroup of~$\SL_2$. Let also
\[
\P^1 = U \cup \set{0,\infty}
\]
be a projective line compactifying~$U$. Consider the operation of~$\SL_2$ on
itself defined by conjugation and construct the closure~$\VFam$ in~$G \times
\P^1$ of
\[
\VFam^o = \set { (u t\inv u, u) \in G \times \P^1 \mid (t,u)\in T \times U}
. 
\]
For each~$u\in U$ the fiber~$\VFam_u$ is a torus of~$G$ and
at~$u\in\set{0,\infty}$ the fibre is a Borel subgroup of~$G$. Now let~$\FFam =
G_\VFam $ be the stabiliser of~$\VFam$ in~$G$. At each~$u\in U$ the
fibre~$\FFam_u$ is the normaliser of the torus~$\VFam_u$ in~$G$, that is an
extension of the Weyl group of~$G$ by~$\VFam_u$. It has two connected
components. At~$u\in\set{0,\infty}$ the fibre~$\FFam_u$ is the normaliser of
the Borel subgroup~$\VFam_u$ of~$G$, thus~$\FFam_u = \VFam_u$. Hence~$\VFam$
has two irreducible components that dominates~$\P^1$. We show that is possible
to do the quotient of a group by its identity component in
families~(\ref{co:stein}). Here the quotient space consists of two copies
of~$\P^1$ glued together along~$\set{0,\infty}$.
\end{example}

\begin{example}[A loose family]
We now give an example of a family of loose subgroups of~$\C^2$ that is not a
family of subgroups. Let~$C$ be the plane cuspidal cubic, whose algebra of
regular functions is
\[
\C[p_1,p_2]/(p_1^2 - p_2^3)
,
\]
and consider the family~$\FFam_0$ of lines in~$\C^2$
meeting~$C$ in~$(0,0)$ and a distinct point:
\[
\FFam_0 = \set{
  (x,y,p) \in \C^2\times C\setminus\set{(0,0)}
  \mid p_1 y - p_2 x = 0
}
.
\]
The closure~$\FFam$ of~$\FFam_0$ in~$\C^2 \times C$ is defined by the ideal
generated by
\[
p_1 y - p_2 x
\quad\text{and}\quad
y^2 - p_1 x^2
.
\]
In this family, each fibre is supported by a line in~$\C^2$, but the fibre
at~$(0,0)$ fails to be reduced. Hence this family of loose subgroups of~$\C^2$
is not a family of subgroups of~$\C^2$.
\end{example}


\section{Differentiation and integration of families}
\label{se:di}
\subsubsection{Differentiation of families of groups}
\label{ss:didiff}

Let~$G$ be an algebraic group,~$\FMod$ an algebraic variety and let~$\FFam$ be
a family of subgroups of~$G$ parametrised by~$\FMod$. We construct the
corresponding family of Lie algebras. For this, we consider 
the trivial family~$\Gg \times \FMod$ as a subvariety of the tangent bundle to~$G
\times \FMod$.

\begin{definition}
The family~$\Lie(\FFam)$ of Lie algebras associated to~$\FFam$ is
\[
\Lie(\FFam) = T\FFam \cap (\Gg\times \FMod)
.
\]
\end{definition}

Serre~\cite{GAGA} studied the analytic structure of algebraic
varieties. We recall some of his results in
Appendix~\ref{se:flat}. They allow us to relate the flatness
of~$\FFam$ and of~$\Lie(\FFam)$ through the exponential map of~$G$:

\begin{proposition}
\label{pr:diff}
Let~$\FFam$ be a family of connected loose algebraic subgroups of~$G$
parametrised by~$\FMod$. This family is flat if, and only if,~$\Lie(\FFam)$ is
flat.
\end{proposition}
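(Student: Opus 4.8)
The plan is to show that each of the two flatness conditions is governed by the behaviour of the family near a distinguished section --- the unit section $\{e\}\times\FMod$ for $\FFam$ and the zero section $\{0\}\times\FMod$ for $\Lie(\FFam)$ --- and that the exponential map identifies these two local pictures over $\FMod$, so that flatness transfers from one to the other.

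First I would record that $\pi\colon\FFam\to\FMod$ is a group scheme over $\FMod$: the restrictions of $\iota_\FMod$ and $\mu_\FMod$ supply the inverse and the multiplication, and $p\mapsto(e,p)$ is a unit section. For such a group scheme flatness is a homogeneous condition along the fibres; the isomorphism $c\colon\FFam\times_\FMod\FFam\to\FFam\times_\FMod\FFam$, $(x,y)\mapsto(xy,y)$, over $\FFam$ via the second factor exhibits the fibrewise translations, so that flatness of $\pi$ along the unit section propagates to all of $\FFam$, and hence $\FFam$ is flat if and only if it is flat in a neighbourhood of $\{e\}\times\FMod$. Dually, $\Lie(\FFam)\subset\Gg\times\FMod$ is fibrewise a linear subspace, hence a cone stable under the scaling $\lambda\cdot(v,p)=(\lambda v,p)$; since this $\C^\times$-action contracts $\Gg\times\FMod$ onto the zero section, $\Lie(\FFam)$ is flat if and only if it is flat in a neighbourhood of $\{0\}\times\FMod$, and there flatness amounts to local constancy of the rank $\dim\Lie(\FFam_p)=\dim\FFam_p$.

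Next I would bring in the exponential map. The morphism $\exp\times\id_\FMod\colon\Gg\times\FMod\to G\times\FMod$ is analytic and, because $d\exp_0=\id$, restricts to a local analytic isomorphism of a neighbourhood of the zero section onto a neighbourhood of the unit section which commutes with the two projections to $\FMod$. Since $\exp$ sends each subalgebra $\Lie(\FFam_p)$ into the connected group $\FFam_p$ and is a local diffeomorphism there, $\exp\times\id_\FMod$ carries the germ of $\Lie(\FFam)$ along the zero section onto the germ of $\FFam$ along the unit section. Granting that this is an isomorphism of analytic germs over $\FMod$ --- and not merely a set-theoretic bijection of supports --- flatness of the analytic family $\Lie(\FFam)$ near the zero section and flatness of the analytic family $\FFam$ near the unit section become equivalent. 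I would then invoke Serre's comparison results recalled in Appendix~\ref{se:flat} to pass between analytic and algebraic flatness on each side, and combine this with the two homogeneity reductions above to remove the ``near a section'' qualifiers, obtaining the asserted equivalence.

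The hard part will be the step asserting that $\exp\times\id_\FMod$ induces an isomorphism of analytic germs respecting the scheme structures, rather than merely matching supports: this is the local form of Corollary~\ref{co:main1}, and it is precisely where the analytic nature of $\exp$ bites, all the more so because the fibres of a loose family may be non-reduced, so that the germ of $\FFam$ along the unit section need not coincide with the germ of the reduced subgroup. Marrying this analytic identification with the algebraic--analytic flatness dictionary of the appendix, so that the equivalence is genuinely scheme-theoretic and holds over a possibly singular base $\FMod$, is the crux of the argument.
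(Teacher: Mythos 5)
Your proposal follows essentially the same route as the paper: its proof consists precisely of the biholomorphism induced by $\exp$ between a neighbourhood $W_0$ of $0$ in $\Gg$ and a neighbourhood $W_1$ of $e$ in $G$, combined with the analytic--algebraic flatness comparison of Corollary~\ref{co:appflat}. The homogeneity reductions (translation to the unit section, $\C^\times$-contraction to the zero section) and the scheme-theoretic germ identification that you flag as the crux are left implicit in the paper's two-line argument, so your write-up is if anything more explicit than the original.
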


\begin{proof}
The exponential~$\exp:\Gg\to G$ induces a biholomorphism from a
neighbourhood~$W_0$ of~$0$ in~$\Gg$ and a neighbourhood~$W_1$ of the identity~$e$
element in~$G$. It follows from~\ref{co:appflat} that the flatness of~$\FFam$ at
each point of~$\FFam\cap W_1\times\FMod$ is equivalent to the flatness
of~$\Lie(\FFam)$ at each point of~$\Lie(\FFam)\cap W_0\times\FMod$.
\end{proof}

\begin{corollary}
\label{co:diff}
Let~$\FFam$ be a family of connected $k$-dimensional algebraic subgroups of~$G$
parametrised by~$\FMod$. If~$\FFam$ is flat, then there is a
morphism~$\psi:\FMod\to\SGrs$ such that~$\Lie(\FFam)$ is obtained by
pulling back the tautological bundle~$\Tautological\to\SGrs$
through~$\psi$.
\end{corollary}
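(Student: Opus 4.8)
The plan is to identify $\Lie(\FFam)$ with a rank-$k$ subbundle of the trivial bundle $\Gg\times\FMod$ and then invoke the universal property of the Grassmannian, which represents the functor sending a variety~$S$ to the set of rank-$k$ subbundles of $\Gg\times S$; this produces the morphism $\psi\colon\FMod\to\SGrs$ together with the identification $\Lie(\FFam)=\psi^{*}\Tautological$, since the tautological bundle $\Tautological\subset\Gg\times\SGrs$ is by construction the universal subbundle. The input from Proposition~\ref{pr:diff} is that flatness of~$\FFam$ forces $\Lie(\FFam)$ to be flat over~$\FMod$, so the whole task reduces to upgrading this flat family of subschemes of~$\Gg$ into a genuine vector subbundle.

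The first thing to pin down is the shape of the fibres. Over~$\C$ every connected algebraic subgroup is smooth, so each fibre~$\FFam_p$ is a smooth connected $k$-dimensional group, and its tangent space at the identity~$e$ is exactly the Lie algebra $\Lie(\FFam_p)$, a reduced $k$-dimensional linear subspace of~$\Gg$. By the definition $\Lie(\FFam)=T\FFam\cap(\Gg\times\FMod)$ the fibre of $\Lie(\FFam)$ at~$p$ is this subspace, so $\Lie(\FFam)$ is a flat family all of whose fibres are $k$-dimensional linear subspaces of~$\Gg$; in particular it is stable under the scaling action of~$\C^{*}$ on the $\Gg$-factor and is a family of linear subspaces of constant dimension~$k$.

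Next I would promote this to a subbundle. Working over an affine chart of~$\FMod$, let $\mathcal{E}\subset\Gg\otimes\mathcal{O}_{\FMod}$ be the subsheaf of sections of $\Lie(\FFam)$ and $\mathcal{Q}=(\Gg\otimes\mathcal{O}_{\FMod})/\mathcal{E}$ the quotient. Setting $n=\dim\Gg$, the fibre $\mathcal{Q}\otimes\kappa(p)$ has constant dimension $n-k$ at every point~$p$, because the fibres of $\Lie(\FFam)$ are linear of dimension~$k$; combined with the flatness of $\Lie(\FFam)$ this forces $\mathcal{Q}$ to be locally free of rank $n-k$ and $\mathcal{E}$ to be a locally split subsheaf of rank~$k$, which is precisely a rank-$k$ subbundle. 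Feeding this subbundle into the Grassmannian's universal property then delivers $\psi$ and the equality $\Lie(\FFam)=\psi^{*}\Tautological$ at once, the representability of the functor guaranteeing that $\psi$ is a morphism of varieties and not merely a set map.

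The delicate step is the last passage: translating flatness of $\Lie(\FFam)$ as a family of \emph{schemes} into local freeness of the quotient \emph{sheaf}. What makes it go through is exactly the homogeneity observed above---each fibre is cut out by linear equations, so the module of those equations is a conormal datum of the constant rank~$n-k$, and flatness together with constancy of fibre dimension yields the locally free quotient. The scaling invariance is what further guarantees that the subbundle $\psi^{*}\Tautological$ recovers the full subscheme $\Lie(\FFam)$ rather than only its reduction. Once the subbundle structure is in hand I expect the remaining verifications---that the identification is canonical and compatible with the construction of~$\Tautological$---to be routine.
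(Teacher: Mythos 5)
Your proposal is correct and follows exactly the route the paper intends: the paper states this corollary without further argument, treating it as immediate from Proposition~\ref{pr:diff} (flatness of~$\FFam$ gives flatness of~$\Lie(\FFam)$) together with the universal property of the Grassmann variety, which is the same classifying-morphism argument the author spells out later in the tori example. Your middle paragraph merely fills in the standard detail---that a flat family of $k$-dimensional linear subspaces of~$\Gg$ over a reduced base has locally free quotient of constant rank $n-k$ and is therefore a rank-$k$ subbundle---which the paper takes for granted.
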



\subsubsection{Integration of families of Lie algebras}
\label{ss:intgr}

We study the possibility of going backwards in the process of differentiating
a flat family of connected subgroups of~$G$. With the notations of the
previous section,~$\psi(\FMod)$ is a constructible subset of~$\SLie$
contained into the set~$\SAlg$ of algebraic subalgebras of~$\Gg$. We show that the
integration of a flat family of algebraic Lie algebras is possible, let us
introduce some notations before we state our main theorem.

For any subset~$\FMod$ of~$\SAlg$ we write~$\Ufam(\FMod)$ for the set-theoretic
family of subalgebras of~$\Gg$ obtained by restricting to~$\FMod$ the tautological
bundle~$\Tautological\to\SGrs$. For all~$p\in \SAlg$ we denote by~$\UFam_p$ the
connected subgroup of~$G$ whose Lie algebra is~$\Ufam_p$ and by~$\UFam(\FMod)$ the
set-theoretic family of connected subgroups of~$G$ defined by
\[
\UFam(\FMod) = \set{
  (g,p)\in G\times \FMod
  \mid
  g\in\UFam_p
}
.
\]
We can now state our main

\begin{theorem}
\label{th:main1}
Let~$\FMod$ be an irreducible constructible subset of~$\SLie$. If~$\FMod$ is
contained in~$\SAlg$, then the closure~$\bar\FMod$ of~$\FMod$ in~$\SLie$ is
also contained in~$\SAlg$ and~$\UFam(\bar\FMod)$ equals~$\bar\UFam(\FMod)$.
\end{theorem}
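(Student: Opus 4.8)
The plan is to realise the integrated family as a single Zariski closure and then to identify its boundary fibres one point at a time. Set $Z=\overline{\UFam(\FMod)}$, the closure taken inside $G\times\SLie$, with projection $\pi\colon Z\to\bar\FMod$. Both assertions of the theorem follow from the single \emph{Main claim:} for every $p_0\in\bar\FMod$ the fibre $Z_{p_0}$ is a connected $k$-dimensional subgroup of $G$ whose Lie algebra is $\Ufam_{p_0}$. Indeed, the claim forces $\Ufam_{p_0}=\Lie(Z_{p_0})\in\SAlg$, so $\bar\FMod\subseteq\SAlg$; and it identifies $Z_{p_0}$ with the connected subgroup $\UFam_{p_0}$, whence $Z=\UFam(\bar\FMod)$, which is exactly the equality $\overline{\UFam(\FMod)}=\UFam(\bar\FMod)$. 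Since $\FMod$ is constructible and dense in the irreducible variety $\bar\FMod$, it contains a dense open subset, so the locus where the claim can fail is a proper closed subset. To reach a prescribed boundary point $p_0$, I would choose an irreducible curve through $p_0$ meeting this open subset, normalise it, and replace $\bar\FMod$ by the resulting smooth curve $C$ and $\FMod$ by the preimage of the good open set. This is the reduction to a small parameter set, and it is what makes the flatness arguments below available.

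Over the smooth curve $C$ the tautological family $\Ufam$ is the restriction of the rank-$k$ bundle $\Tautological\to\SGrs$, hence is flat; by Proposition~\ref{pr:diff} the group family $\UFam(\FMod)$ is flat as well. Being flat it is an open map, and the standard criterion for an open surjection onto an irreducible base with irreducible fibres of constant dimension shows that $\UFam(\FMod)$ is irreducible. Its closure $Z$ is therefore irreducible and dominates $C$, so $\pi\colon Z\to C$ is flat and every fibre, in particular $Z_{p_0}$, has pure dimension $k$. Passing to fibres in the inclusions $\mu_{C}(Z\times_{C}Z)\subseteq Z$ and $\iota_{C}(Z)\subseteq Z$, together with $(e,p)\in Z$ — each valid because it holds over the dense open $C\setminus\{p_0\}$ and $Z$ is closed — shows that the scheme-theoretic fibre $Z_{p_0}$ is a group subscheme of $G$. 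As we work in characteristic zero, Cartier's theorem makes this group scheme reduced, so $Z_{p_0}$ is a genuine algebraic subgroup of pure dimension $k$. Finally $\exp(\Ufam_{p_0})\subseteq Z_{p_0}$ by continuity of $\exp$, since the inclusion holds fibrewise over $C\setminus\{p_0\}$ and $Z$ is closed; as $\exp$ is a local diffeomorphism at $0$ this gives $\Ufam_{p_0}\subseteq\Lie(Z_{p_0})$, and a dimension count yields $\Ufam_{p_0}=\Lie(Z_{p_0})$. This already proves $\bar\FMod\subseteq\SAlg$.

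It remains to prove that $Z_{p_0}$ is connected, and I expect this to be the main obstacle. The difficulty is genuine: the flat limit of a family of connected groups need not be connected for purely formal reasons, because the fibres are affine and the usual connectedness theorems require properness, which is unavailable here. This is precisely where the semicontinuity of discrete invariants must enter. The strategy is to bound the finite component group $Z_{p_0}/Z_{p_0}^{\circ}$ by an invariant that cannot increase at $p_0$. Concretely, I would first use the solvable radical to separate a reductive part and reduce the question to a unipotent piece and a toral piece; on the unipotent part Malcev coordinates turn $\exp$ into an \emph{algebraic} isomorphism with polynomial group law, so there $Z_{p_0}$ is visibly the closure of the exponential of its Lie algebra and hence connected, while the reductive and toral contributions are pinned down by their rank, which behaves semicontinuously along the family. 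Combining these forces $Z_{p_0}=Z_{p_0}^{\circ}$.

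With connectedness in hand, $Z_{p_0}$ is the connected subgroup $\UFam_{p_0}$ with Lie algebra $\Ufam_{p_0}$, which establishes the Main claim and therefore both assertions of the theorem. Note finally that flatness of the integrated family $\UFam(\bar\FMod)=Z$ is then automatic: its family of Lie algebras is the flat bundle $\Ufam(\bar\FMod)$, so Proposition~\ref{pr:diff} applies and gives the flatness asserted in the surrounding discussion.
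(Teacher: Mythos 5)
There is a genuine gap, and it sits at the very first step. You treat~$\UFam(\FMod)$ as an algebraic --- or at least flat --- family over a dense open subset of~$\bar\FMod$, but nothing in your argument establishes this, and it is precisely the hard content of the theorem. The appeal to Proposition~\ref{pr:diff} is circular: that proposition relates the flatness of~$\FFam$ and of~$\Lie(\FFam)$ only when~$\FFam$ is already an \emph{algebraic} family, i.e.\ a subvariety of~$G\times\FMod$, whereas for the set-theoretic union~$\UFam(\FMod)$ no such structure is known a priori: the degrees of the fibres~$\UFam_p$ in a compactification of~$G$ could be unbounded as~$p$ varies, in which case~$\UFam(\FMod)$ is not even constructible and the fibres of its closure~$Z$ would strictly contain~$\UFam_p$ over every dense open set. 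Your assertion that ``the locus where the Main claim can fail is a proper closed subset'' is therefore not a soft consequence of constructibility of~$\FMod$; it is exactly Proposition~\ref{pr:digeneral} of the paper (algebraicity of the set-theoretic family for~$\FMod\subset\SAlg$ quasi-projective), whose proof is the bulk of Section~\ref{se:di}: Richardson's rigidity making the isomorphism class of the semi-simple factor semi-continuous (\ref{co:dimalcev}), hence generically constant and conjugate into a fixed~$\Gs$; rank semi-continuity (\ref{pr:ranksc}) freezing the toral part; algebraicity of~$\exp$ on the unipotent radical (\ref{th:diresolv}); and the product construction~(\ref{co:fconstr3}). You do invoke this very machinery, but only as a closing sketch to get connectedness of the special fibre --- it is needed much earlier, and for more than connectedness.

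A secondary defect would remain even if algebraicity over a dense open subset~$U$ were granted: your curve reduction proves the Main claim only for~$\overline{\UFam(\FMod\cap C)}$, whose fibre at~$p_0$ may be strictly smaller than~$Z_{p_0}$, so you obtain~$p_0\in\SAlg$ and~$\UFam_{p_0}\subseteq Z_{p_0}$ but not the upper bound~$Z_{p_0}\subseteq\UFam_{p_0}$ needed for~$\bar\UFam(\FMod)=\UFam(\bar\FMod)$. To bound~$Z_{p_0}$ from above one must run a curve through an \emph{arbitrary} point~$x\in Z_{p_0}$ that meets~$\UFam(\FMod)$, which again uses its constructibility; this is how the paper's Proposition~\ref{pr:fconstr2} proceeds, taking a curve inside the algebraic family~$\UFam(U)$ through~$x$, transporting it by~$\exp$ and comparing with the locally trivial bundle~$\Ufam(\bar\FMod)$. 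Once~\ref{pr:digeneral} is in place, your remaining steps (flatness over the normalised curve, the group structure of the limit fibre via closures of the~$\mu$ and~$\iota$ inclusions --- the paper's~\ref{pr:fgroup} --- reducedness in characteristic zero, and~$\exp(\Ufam_{p_0})\subseteq Z_{p_0}$ by continuity) essentially reproduce~\ref{pr:fconstr2}; so the proposal has a sound second half but assumes its way past the first, which is where the theorem actually lives.
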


This theorem follows from Proposition~\ref{pr:fconstr2} and
Proposition~\ref{pr:digeneral} below. For now, we
give two corollaries of~\ref{th:main1}:

\begin{corollary}
With the notations of~\ref{th:main1}, $\UFam(\bar\FMod)$ is a flat family of
connected subgroups of~$G$ whose corresponding family of Lie subalgebras
of~$\Gg$ is~$\Lie(\UFam(\bar\FMod)) = \Ufam(\bar\FMod)$.
\end{corollary}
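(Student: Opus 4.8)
The plan is to extract the statement from Theorem~\ref{th:main1} and then run it through the differentiation calculus of Section~\ref{ss:didiff}. To begin, Theorem~\ref{th:main1} already supplies two inputs. Since~$\bar\FMod\subset\SAlg$, every fibre~$\UFam_p$ with~$p\in\bar\FMod$ is a connected algebraic subgroup of~$G$, smooth of dimension~$k$, with~$\Lie(\UFam_p)=\Ufam_p$; and since~$\UFam(\bar\FMod)=\overline{\UFam(\FMod)}$ is a Zariski closure, it is a closed subvariety of~$G\times\bar\FMod$. Hence~$\UFam(\bar\FMod)$ is a genuine algebraic family whose fibres are the reduced connected subgroups~$\UFam_p$, so it is a family of connected subgroups of~$G$, and in particular one of connected loose algebraic subgroups, to which Proposition~\ref{pr:diff} applies.

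The crux is to show~$\Lie(\UFam(\bar\FMod))=\Ufam(\bar\FMod)$. Both are subvarieties of~$\Gg\times\bar\FMod$ with the same fibre~$\Ufam_p=\Lie(\UFam_p)$ over each~$p$, so set-theoretically they agree; the point is to check this at the scheme level, especially over the boundary~$\bar\FMod\setminus\FMod$ where~$\UFam(\bar\FMod)$ may be singular. I would argue near the identity section, exactly as in the proof of Proposition~\ref{pr:diff}. The exponential induces a biholomorphism from a neighbourhood~$W_0$ of~$0$ in~$\Gg$ onto a neighbourhood~$W_1$ of~$e$, and for a connected subgroup one has~$\exp^{-1}(\UFam_p)\cap W_0=\Ufam_p\cap W_0$. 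Fibre by fibre this says that~$\exp\times\id_{\bar\FMod}$ carries~$\Ufam(\bar\FMod)\cap(W_0\times\bar\FMod)$ onto~$\UFam(\bar\FMod)\cap(W_1\times\bar\FMod)$. Thus near the identity section~$\UFam(\bar\FMod)$ is the biholomorphic image of the locally free subbundle~$\Ufam(\bar\FMod)$; its only singularities there are \emph{horizontal}, inherited from~$\bar\FMod$, and the \emph{vertical} tangent space stays equal to~$\Ufam_p$. Since~$d\exp_0=\id$, I would conclude that the fibre of~$\Lie(\UFam(\bar\FMod))=T\UFam(\bar\FMod)\cap(\Gg\times\bar\FMod)$ over~$p$ is exactly~$\Ufam_p$, with no spurious vertical directions, so the two families coincide.

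Flatness then comes for free. The family~$\Ufam(\bar\FMod)$ is the restriction to~$\bar\FMod$ of the tautological subbundle~$\Tautological\to\SGrs$, hence locally free of rank~$k$ and in particular flat over~$\bar\FMod$. By Proposition~\ref{pr:diff} a family of connected loose algebraic subgroups is flat precisely when its family of Lie algebras is flat; applying this to~$\UFam(\bar\FMod)$ and using~$\Lie(\UFam(\bar\FMod))=\Ufam(\bar\FMod)$, the flatness of the tautological subbundle forces the flatness of~$\UFam(\bar\FMod)$. Together with the first step, this exhibits~$\UFam(\bar\FMod)$ as a flat family of connected subgroups of~$G$ with the announced Lie algebra family.

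I expect the second step to be the only real obstacle. The naive fibrewise identification is immediate, but the definition of~$\Lie(\FFam)$ through the tangent variety~$T\FFam$ could a priori let the fibre of~$\Lie(\UFam(\bar\FMod))$ jump in dimension over the singular locus of~$\bar\FMod$. What rescues the argument is that Theorem~\ref{th:main1} forces~$\bar\FMod\subset\SAlg$, so every fibre is a smooth~$k$-dimensional group, and that the exponential comparison confines all singularities of the total space to the horizontal directions; the vertical slice, which is what~$\Lie$ records, remains the smooth~$k$-plane~$\Ufam_p$.
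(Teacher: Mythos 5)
Your proof is correct and takes essentially the same route as the paper: the identity $\Lie(\UFam(\bar\FMod))=\Ufam(\bar\FMod)$, which the paper asserts in one line (its substance being the exponential-chart argument inside the proof of Proposition~\ref{pr:fconstr2}, exactly the local-triviality analysis you spell out), reduces everything to the flatness of the restricted tautological bundle. Your appeal to Proposition~\ref{pr:diff} is the same step as the paper's direct citation of Corollary~\ref{co:appflat}, since the former is proved by the latter.
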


\begin{proof}
We have~$\Lie(\UFam(\bar\FMod)) = \Ufam(\bar\FMod)$, but the right hand side
is the restriction to~$\bar\FMod$ of the tautological
bundle~$\Tautological\to\SGrs$, so it is a flat family of linear subspaces
of~$\Gg$. By~\ref{co:appflat}, the corresponding family of groups is flat.
\end{proof}

\begin{corollary}
\label{co:main1}
The set~$\exp\times\id_{\SLie}(\Ufam(\bar\FMod))$ is
actually~$\UFam(\bar\FMod)$, it is in particular algebraic.
\end{corollary}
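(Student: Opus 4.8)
The plan is to deduce the statement from Theorem~\ref{th:main1}, which identifies the tautological family of groups~$\UFam(\bar\FMod)$ with the closure~$\overline{\UFam(\FMod)}$ and in particular exhibits it as an algebraic subset of~$G\times\bar\FMod$. Write~$\Phi=\exp\times\id_{\SLie}$ and~$E=\Phi(\Ufam(\bar\FMod))$ for the set under study. One inclusion is immediate and purely formal: for~$(X,p)\in\Ufam(\bar\FMod)$ we have~$X\in\Ufam_p=\Lie(\UFam_p)$, so~$\exp(X)$ lies in the connected subgroup~$\UFam_p$ and~$\Phi(X,p)\in\UFam(\bar\FMod)$. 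Hence~$E\subseteq\UFam(\bar\FMod)$, so the content of the Corollary is the reverse inclusion, after which algebraicity is inherited from~$\UFam(\bar\FMod)$ through Theorem~\ref{th:main1}.

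For the reverse direction I would first record that both~$\Ufam(\bar\FMod)$ and~$\UFam(\bar\FMod)$ are irreducible of dimension~$\dim\bar\FMod+k$: the base~$\bar\FMod$ is irreducible as the closure of the irreducible set~$\FMod$, the fibres of the first family are the~$k$-planes~$\Ufam_p$ and those of the second are the~$k$-dimensional connected, hence irreducible, groups~$\UFam_p$, while Theorem~\ref{th:main1} presents~$\UFam(\bar\FMod)$ as the closure of the total space over~$\FMod$. The map~$\Phi$ is the analytic exponential in the first factor and the identity in the second; by the local biholomorphism used in the proof of Proposition~\ref{pr:diff} there are neighbourhoods~$W_0\ni 0$ in~$\Gg$ and~$W_1\ni e$ in~$G$ with~$\exp\colon W_0\to W_1$ biholomorphic, and fibrewise~$\exp$ carries~$\Ufam_p\cap W_0$ onto~$\UFam_p\cap W_1$. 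Thus~$\Phi$ maps~$\Ufam(\bar\FMod)\cap(W_0\times\bar\FMod)$ onto~$\UFam(\bar\FMod)\cap(W_1\times\bar\FMod)$, so~$E$ contains this slice, which is a nonempty open subset of the irreducible variety~$\UFam(\bar\FMod)$ and is therefore dense. This already yields~$\overline{E}=\UFam(\bar\FMod)$ and pins down the dimension and irreducible component containing~$E$.

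The remaining, genuinely delicate, step is to pass from density to the equality~$E=\UFam(\bar\FMod)$. The exponential is \emph{not} globally surjective onto a connected group---already~$\exp\colon\Gsl_2\to\GSL_2$ omits elements such as a non-trivial unipotent times~$-\id$---so one cannot hope for~$\exp(\Ufam_p)=\UFam_p$ at every individual~$p$, and this is exactly the analytic obstruction flagged in the introduction. I expect this reconciliation of the analytically defined image~$E$ with the algebraic variety~$\UFam(\bar\FMod)$ to be the main obstacle: it is where the defect of surjectivity of~$\exp$ on non-exponential fibres must be absorbed. The point is that such a fibre lies in the closure of nearby fibres on which~$\exp$ does fill out the group, so the missing points of a single fibre are recovered from neighbouring ones; the closure identification of Theorem~\ref{th:main1}, rather than any fibrewise statement, is what makes~$E$ coincide with~$\UFam(\bar\FMod)$ and forces the image to be algebraic. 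This is precisely why the Corollary is proved \emph{after}, and as a consequence of, Theorem~\ref{th:main1}, and not the other way around.
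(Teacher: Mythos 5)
The paper's own proof is a single line: it asserts that the exponential map of a complex connected algebraic group is always surjective, applies this fibrewise to get $\exp(\Ufam_p)=\UFam_p$ for every $p\in\bar\FMod$, and then invokes Theorem~\ref{th:main1} for the algebraicity of $\UFam(\bar\FMod)$. Your proposal departs from this at the decisive point: you deny the surjectivity of $\exp$ (correctly, as a classical fact --- $-u$ with $u$ a nontrivial unipotent is not an exponential in $\GSL_2(\C)$) and try to substitute a density argument. What you actually establish is sound but insufficient: the trivial inclusion $E=\exp\times\id_{\SLie}(\Ufam(\bar\FMod))\subseteq\UFam(\bar\FMod)$, and, via the local biholomorphism of Proposition~\ref{pr:diff}, that $E$ contains the open slice $\UFam(\bar\FMod)\cap(W_1\times\bar\FMod)$, whence $\bar E=\UFam(\bar\FMod)$. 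Density is not equality, and your closing step is not an argument but a hope.

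The gap is concrete: $\exp\times\id_{\SLie}$ preserves the parameter, so the fibre of $E$ over $p$ is \emph{exactly} $\exp(\Ufam_p)$. A point $g\in\UFam_p\setminus\exp(\Ufam_p)$ cannot be ``recovered from neighbouring fibres'': exponentiating some $X\in\Ufam_q$ with $q\neq p$ produces the point $(\exp X,q)$, never $(g,p)$. Hence the Corollary is precisely equivalent to the fibrewise surjectivity $\exp(\Ufam_p)=\UFam_p$ for all $p\in\bar\FMod$ --- the very statement you declared false --- and no amount of closure-taking in the total space can bridge that. Indeed, taken at face value your $\GSL_2$ observation refutes the Corollary itself rather than proving it: for $G=\GSL_2$, $k=3$ and the one-point parameter set $\bar\FMod=\set{\Gsl_2}\subset\SLie$, the claimed equality reads $\exp(\Gsl_2)=\GSL_2(\C)$, which fails. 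So the tension you flagged is real, but it lies between your (correct) premise and the paper's proof, whose sole ingredient is the surjectivity claim (valid for tori, unipotent groups and $\GGL_n$, not for $\GSL_2(\C)$); your proposal neither adopts that ingredient nor supplies a working replacement, so the reverse inclusion is never established.
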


\begin{proof}
For complex connected algebraic groups, the exponential map is always surjective, so
this result follows from~\ref{th:main1}.
\end{proof}


\subsubsection{Proof of~\ref{th:main1} ---  Construction of families}
We present some elementary constructions of families of groups.

\begin{proposition}
\label{pr:fgroup}
Let~$\FMod$ be an irreducible algebraic variety and~$\FFam \subset G
\times\FMod$ a family of subschemes of~$G$ parametrised by~$\FFam$.
If there exists an open
subset~$V$ of~$\FMod$ such that the restriction of~$\FFam$ to~$V$ is a
family of loose subgroups of~$G$, then~$\FFam$ itself is a family of
loose subgroups of~$G$.
\end{proposition}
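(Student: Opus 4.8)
The plan is to check, for every $p\in\FMod$, that the reduced fibre $(\FFam_p)_{\mathrm{red}}$ satisfies the three axioms of an algebraic subgroup of $G$: it contains the identity, it is stable under $\iota$, and it is stable under $\mu$. Each of these can be phrased globally. Writing $\pi\colon\FFam\to\FMod$ for the structural projection and $\sigma\colon\FMod\to G\times\FMod$, $p\mapsto(e,p)$, for the identity section, the three conditions read, on the level of supports, as the inclusions $\sigma(\FMod)\subseteq\FFam$, $\iota_\FMod(\FFam)\subseteq\FFam$ and $\mu_\FMod(\FFam\times_\FMod\FFam)\subseteq\FFam$. By hypothesis all three hold over $V$, and since $\FMod$ is irreducible the open set $V$ is dense; the whole proof amounts to propagating these inclusions from $V$ to the whole of $\FMod$. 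I will use throughout that every irreducible component of $\FFam$ dominates $\FMod$ --- automatic when $\FFam$ is irreducible, which is the case of interest --- so that $\pi^{-1}(V)$ is a dense open subset of $\FFam$.

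The identity and the inverse are then immediate. The set $\sigma^{-1}(\FFam)$ is closed in $\FMod$ and contains the dense set $V$, hence equals $\FMod$, so $(e,p)\in\FFam$ for every $p$. For the inverse, $\iota_\FMod$ is an involutive automorphism of $G\times\FMod$, so $\iota_\FMod(\FFam)$ is closed; it contains $\iota_\FMod(\pi^{-1}(V))=\pi^{-1}(V)$, hence its closure $\FFam$. Applying the involution to $\FFam\subseteq\iota_\FMod(\FFam)$ yields the reverse inclusion, and therefore $\iota_\FMod(\FFam)=\FFam$, which is the stability of every fibre under $\iota$.

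I expect the multiplication to be the real obstacle, for two reasons: $G$ need not be proper, so the image $\mu_\FMod(\FFam\times_\FMod\FFam)$ is merely constructible and not obviously closed, and the fibred product $\FFam\times_\FMod\FFam$ need not be irreducible. To avoid reasoning on the image I would argue on the source and consider the closed subset
\[
Z=(\FFam\times_\FMod\FFam)\cap\mu_\FMod^{-1}(\FFam).
\]
Over $V$ one has $Z=\FFam\times_\FMod\FFam$, because the fibres $\FFam_p$ are subgroups there; hence $Z$ contains the restriction of $\FFam\times_\FMod\FFam$ to $V$. If every irreducible component of $\FFam\times_\FMod\FFam$ dominates $\FMod$, that restriction is dense, so the closed set $Z$ is everything, which is exactly the stability of each fibre under $\mu$. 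Combining the three axioms then shows that each $(\FFam_p)_{\mathrm{red}}$ is a closed subgroup, so that $\FFam$ is a family of loose subgroups.

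The delicate point is thus the density of $\pi^{-1}(V)$ inside $\FFam\times_\FMod\FFam$, equivalently the absence of components of the fibred product lying over a proper closed subset of $\FMod$. Such components appear precisely where the fibre dimension of $\pi$ jumps, and this is where I would concentrate the effort: one shows that over a dense open subset of $\FMod$ the morphism $\pi$ is flat, hence equidimensional, so that the generic fibre of $\FFam\times_\FMod\FFam$ spreads out over a dense open and accounts for every component. With this in hand the component argument above goes through and finishes the proof.
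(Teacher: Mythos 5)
Your reduction to three closure conditions and your treatment of the identity and the inverse are sound, and they run in the same spirit as the paper's own proof, which compresses all three conditions into the single map $\theta(g_1,g_2,p)=(g_1^{-1}g_2,p)$ and propagates the inclusion $\theta(\FFam\times_\FMod\FFam)\subset\FFam$ by closure from the dense open set $V$. (Both you and the paper tacitly need every irreducible component of~$\FFam$ to dominate~$\FMod$ --- the paper simply asserts ``$\FFam$ is irreducible,'' which is not among the stated hypotheses; you at least flag this.) More importantly, you have correctly located the one genuine difficulty, which the paper's one-line continuity argument passes over in silence: closure from~$V$ only yields that $\mu_\FMod$ maps $\overline{(\FFam\times_\FMod\FFam)\vert_V}$ into~$\FFam$, and one still needs the restriction of the fibred square over~$V$ to be dense in $\FFam\times_\FMod\FFam$, i.e.\ the absence of components of the fibred square lying over the boundary.

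But your proposed way of securing that density fails, and the gap cannot be closed by such means. Generic flatness makes $\pi$ equidimensional only over some dense open subset and says nothing over the jump locus; it is precisely over that locus that extra components of $\FFam\times_\FMod\FFam$ arise, and no ``spreading out'' of the generic fibre accounts for them. In fact the density --- and the proposition as literally stated --- can fail. Take $G=(\C^3,+)$, $\FMod=\C^2$ with coordinates $(s,t)$, $V=\C^2\setminus\set{0}$, and let $\FFam$ be the closure in $\C^3\times\C^2$ of
\[
\set{ (\lambda s^2,\lambda st,\lambda t^2,s,t) \mid \lambda\in\C,\ (s,t)\in V }
.
\]
Over~$V$ each fibre is the line $\C\cdot(s^2,st,t^2)$, a subgroup, and $\pi$ is flat there; but the fibre of~$\FFam$ at the origin is the quadric cone $\set{xz=y^2}$, which is not a subgroup, since $(1,1,1)+(1,-1,1)=(2,0,2)$ does not lie on it. Correspondingly, $\FFam\times_\FMod\FFam$ acquires the four-dimensional component $(\text{cone})\times(\text{cone})$ over the origin, which cannot lie in the four-dimensional closure of the part over~$V$, since that closure dominates~$\FMod$. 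So the missing step is not a technicality: some equidimensionality (no-jump) hypothesis on the fibres of $\FFam\to\FMod$ must be added for your argument --- and equally for the paper's continuity argument, which implicitly assumes the same density --- to go through. Under such a hypothesis (with $\FMod$ smooth, say) the diagonal trick $\FFam\times_\FMod\FFam=(\FFam\times\FFam)\times_{\FMod\times\FMod}\Delta_\FMod$ bounds every component of the fibred square below by $2d+\dim\FMod$, a dimension count excludes components over proper closed subsets, and your argument with $Z=(\FFam\times_\FMod\FFam)\cap\mu_\FMod^{-1}(\FFam)$ then does finish the proof.
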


\begin{proof}
\newcommand\FFamV{\FFam(V)}
\newcommand\FS[2]{{#2}\times_{#1}{#2}}
\newcommand\FSFFam{\FS \FMod\FFam}%
\newcommand\FSFFamV{\FS \FMod\FFamV}%
Let~$\theta: G \times G \times \FMod \to G \times \FMod$ be the morphism
defined by~$\theta(g_1, g_2, p) = (g_1 \inv g_2, p)$. Note that a
family~$\FFam \subset G \times \FMod$ of subvarieties of~$G$ is a family
of subgroups of~$G$ if, and only if,~$\theta(\FSFFam) \subset\FFam$.

Since~$\FFam$ is an irreducible subvariety of~$G\times\FMod$, it is
the closure in~$G\times\FMod$ of the
restriction~$\FFam(V)$ of~$\FFam$ to~$V$. The
relation~$\theta(\FSFFamV) \subset \FFamV$ and the continuity of~$\theta$
imply~$\theta(\FSFFam) \subset \FSFFam$. Hence~$\FFam$ is a family
of loose subgroups of~$G$.
\end{proof}

\begin{proposition}
\label{pr:fconstr}
Let~$\FMod$ be an irreducible algebraic variety and let~$\FFam\subset
G\times\FMod$ be a set-theoretic family of subgroups of~$G$. If~$\FFam$ is an
irreducible constructible subset of~$G\times\FMod$, then there is a dense
open subset~$V$ of~$\FMod$, such that the restriction of~$\FFam$ to~$V$ is an
algebraic family of loose subgroups of~$G$ parametrised by~$V$.
\end{proposition}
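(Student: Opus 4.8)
The plan is to reduce the whole statement to comparing~$\FFam$ with its closure. Let~$\bar\FFam$ denote the closure of~$\FFam$ in~$G\times\FMod$; since~$\FFam$ is irreducible and constructible,~$\bar\FFam$ is an irreducible closed subvariety and~$\FFam$ contains a dense open subset~$U$ of~$\bar\FFam$. As each fibre~$\FFam_p$ is a subgroup, hence nonempty, the projection~$\pi\colon\bar\FFam\to\FMod$ is dominant. The proposition will then follow once I produce a dense open~$V\subseteq\FMod$ over which~$\FFam$ and~$\bar\FFam$ coincide: indeed~$\FFam\cap(G\times V)=\bar\FFam\cap(G\times V)$ would be closed in~$G\times V$, hence a subvariety, its projection to~$V$ would be surjective because every fibre is a nonempty group, and each fibre would by hypothesis be an algebraic subgroup of~$G$, so the restriction would be an algebraic family of loose subgroups parametrised by~$V$.

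To find such a~$V$ I will compare the fibres of~$U$ with those of~$\bar\FFam$. First, by generic equidimensionality of the dominant morphism~$\pi$ there is a dense open~$V_1\subseteq\FMod$ such that for every~$p\in V_1$ the fibre~$\bar\FFam_p$ is equidimensional of dimension~$\dim\bar\FFam-\dim\FMod$. Next, set~$Z=\bar\FFam\setminus U$, a closed subset of~$\bar\FFam$ of dimension strictly less than~$\dim\bar\FFam$. Applying the fibre-dimension theorem to each irreducible component of~$Z$---those dominating~$\FMod$ have generic fibre dimension at most~$\dim Z-\dim\FMod$, those not dominating contribute empty fibres over a dense open set---I obtain a dense open~$V_2\subseteq\FMod$ with~$\dim Z_p<\dim\bar\FFam-\dim\FMod$ for all~$p\in V_2$. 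Put~$V=V_1\cap V_2$.

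Over this~$V$ the dimension of~$Z_p$ is smaller than that of every irreducible component of~$\bar\FFam_p$, so~$Z_p$ contains no component of~$\bar\FFam_p$ and therefore~$U_p=\bar\FFam_p\setminus Z_p$ is dense in~$\bar\FFam_p$. Now fix~$p\in V$: we have~$U_p\subseteq\FFam_p\subseteq\bar\FFam_p$, the set~$U_p$ is dense in~$\bar\FFam_p$, and~$\FFam_p$ is closed in~$G$ since it is an algebraic subgroup, so taking closures in~$G$ forces~$\FFam_p=\bar\FFam_p$. As this holds for every~$p\in V$ and a subset of~$G\times V$ is determined by its fibres, I conclude~$\FFam\cap(G\times V)=\bar\FFam\cap(G\times V)$, which finishes the argument as explained above.

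The main obstacle I anticipate is the final density step: controlling~$\bar\FFam_p\setminus U_p$ genuinely requires the \emph{equidimensionality} of the generic fibres of~$\bar\FFam\to\FMod$, not merely a bound on their dimension, since otherwise a low-dimensional component of some fibre could be swallowed by~$Z_p$ and the identity~$\FFam_p=\bar\FFam_p$ would break down. Everything else---passing to the closure, the surjectivity of the projection, and recognising the restriction as a family of loose subgroups---is formal; if one prefers, the last point can alternatively be deduced from Proposition~\ref{pr:fgroup} applied to~$\bar\FFam$ together with the open set~$V$.
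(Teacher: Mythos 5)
Your proof is correct, and it takes a genuinely different route from the paper's. The paper's proof takes the \emph{largest} open subset~$U$ of~$\bar\FFam$ contained in~$\FFam$ and exploits the group law fibrewise: a dense open subset~$U_p$ of the group~$\FFam_p$ satisfies~$\mu(U_p\times U_p)=\FFam_p$, and since~$\mu_\FMod$ is flat, hence open, the set~$\mu_\FMod(U\times_\FMod U)$ is open in~$\bar\FFam$, contained in~$\FFam$ and contains~$U$, so maximality forces~$U=\mu_\FMod(U\times_\FMod U)$; thus every nonempty fibre of~$U$ is the whole group~$\FFam_p$, and the restriction of~$\FFam$ to a suitable dense open~$V$ is the locally closed subvariety~$U\cap(G\times V)$. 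You instead use the group structure only through two soft facts---fibres are nonempty, so the projection is surjective, and fibres are closed in~$G$ (they are algebraic subgroups; even under a weaker reading of ``subgroup'' this is automatic, since a constructible subgroup of an algebraic group is closed)---and you run a purely dimension-theoretic argument: generic equidimensionality of the fibres of~$\bar\FFam\to\FMod$ together with the bound~$\dim Z_p<\dim\bar\FFam-\dim\FMod$ for the boundary~$Z=\bar\FFam\setminus U$ yields fibrewise density of~$U_p$ in~$\bar\FFam_p$, whence~$\FFam_p=\bar\FFam_p$ over~$V$. Your flagged worry is exactly the right one, and your~$V_1$ disposes of it correctly: every component of every fibre of the dominant morphism~$\bar\FFam\to\FMod$ has dimension at least~$\dim\bar\FFam-\dim\FMod$, while generically the total fibre dimension equals it, so generic fibres are equidimensional and no component can be swallowed by~$Z_p$. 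What each approach buys: yours proves slightly more, namely that the restriction to~$V$ is \emph{closed} in~$G\times V$ and coincides there with~$\bar\FFam$, and it generalises verbatim to irreducible constructible families whose fibres are merely nonempty closed subvarieties; the paper's multiplication trick never compares~$\FFam_p$ with~$\bar\FFam_p$ and is the mechanism the paper reuses in the neighbouring statements~\ref{pr:fgroup} and~\ref{pr:fconstr2}. One small caveat: your closing aside about invoking Proposition~\ref{pr:fgroup} is backwards---that proposition passes from a restriction already known to be a family of loose subgroups to the whole family, so it cannot be used to establish the restriction itself; but your direct argument at that step is complete, so nothing is lost.
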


\begin{proof}
\newcommand\Vpg{V_{(p,g)}}%
\newcommand\Upg{U_{(p,g)}}%
Let~$U$ be the largest open subset of the closure of~$\FFam$ in~$\FMod
\times G$ contained in~$\FFam$. Since~$\FFam$ is constructible,~$U$ is
dense in the closure of~$\FFam$.

Take any fibre~$\FFam_p$ meeting~$U$. The set~$U_p = U \cap \FFam_p$ is
open and dense in~$\FFam_p$, thus for any~$g$ in~$\FFam_p$ the open dense
subsets~$\iota(U_p)g$ and~$U_p$ have a common point. This shows that~$g$
belongs to~$\mu(U_p\times U_p)$, that must be equal to~$ \FFam_p$. It implies
that~$\mu_\FMod(U \times_\FMod U)$ contains~$U$. But the morphism~$\mu_\FMod$
is flat, hence~$\mu_\FMod(U \times_\FMod U)$ is open
in~$\bar\FFam$~(\cite[III, ex.~9.1]{H}) and
contained in~$\FFam$. From the maximality of~$U$
follows that~$U = \mu_\FMod(U \times_\FMod U)$.

The set of parameters~$p$ corresponding to a fibre meeting~$U$ is
constructible and dense in~$\FMod$, hence it contains a dense open
subset~$V$ of~$\FMod$. The restriction of the set-theoretic family~$\FFam$
to~$V$ equals~$U \cap (G \times V)$, hence this family is algebraic.
\end{proof}

\begin{proposition}
\label{pr:fconstr2}
Let~$\FMod$ be a constructible subset of~$\SLie$ contained in~$\SAlg$.
If there is an open subset~$U$ of~$\bar\FMod$ contained in~$\FMod$ such
that~$\UFam(U)$ is algebraic, then~$\bar\FMod$ is contained in~$\SAlg$
and~$\UFam(\bar\FMod) = \bar \UFam(\FMod)$, in particular it is algebraic.
\end{proposition}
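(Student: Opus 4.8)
The plan is to set $Y=\bar\UFam(\FMod)$ and show that it is an algebraic family of loose subgroups over $\bar\FMod$ whose fibres are exactly the groups $\UFam_p$. First I would reduce to the closure of the good open set $U$. Working on one irreducible component of $\FMod$ at a time, each fibre $\UFam_q$ with $q\in\FMod\subseteq\SAlg$ is a connected $k$-dimensional subgroup, so $\UFam(\FMod)$ is irreducible of dimension $\dim\bar\FMod+k$; since $U$ is dense in $\bar\FMod$ and contained in $\FMod$, the restriction $\UFam(U)$ is dense in $\UFam(\FMod)$, whence $Y=\bar\UFam(U)$. As $\UFam(U)$ is by hypothesis a closed subvariety of $G\times U$, its closure $Y$ is a genuine subvariety of $G\times\bar\FMod$, so $Y$ is algebraic. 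The identity section $\set{e}\times\bar\FMod$ lies in $Y$, hence $Y\to\bar\FMod$ is surjective, and Proposition~\ref{pr:fgroup}, applied over $\bar\FMod$ with the open set $U$ over which $Y$ restricts to the family of subgroups $\UFam(U)$, shows that $Y$ is a family of loose subgroups of $G$: every fibre $Y_p$ is an algebraic subgroup.

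Next I would identify the fibres near the identity by transporting everything through the exponential. Let $\exp\colon W_0\to W_1$ be the biholomorphism between a neighbourhood $W_0$ of $0$ in $\Gg$ and a neighbourhood $W_1$ of $e$ in $G$ from the proof of Proposition~\ref{pr:diff}. Since each $Y_p$ is an algebraic subgroup, one has $Y_p\cap W_1=\exp(\Lie(Y_p)\cap W_0)$, so $\exp\times\id$ carries $\Lie(Y)\cap(W_0\times\bar\FMod)$ homeomorphically onto $Y\cap(W_1\times\bar\FMod)$, and likewise carries $\Ufam(U)\cap(W_0\times U)$ onto $\UFam(U)\cap(W_1\times U)$. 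Because a homeomorphism preserves closures, these meet the open sets $W_0\times\bar\FMod$ and $W_1\times\bar\FMod$ in the expected way (and analytic and Zariski closures agree by the comparison results recalled for Corollary~\ref{co:appflat}), and because $\Ufam(U)$ is dense in the tautological family $\Ufam(\bar\FMod)$ while $\UFam(U)$ is dense in $Y$, I obtain the equality $Y\cap(W_1\times\bar\FMod)=\exp\times\id\big(\Ufam(\bar\FMod)\cap(W_0\times\bar\FMod)\big)$. Taking the fibre at $p$ and pulling back through the biholomorphism gives $\Lie(Y_p)\cap W_0=\Ufam_p\cap W_0$; as both are linear subspaces they coincide, so $\Lie(Y_p)=\Ufam_p$ and $\dim Y_p=k$ for every $p\in\bar\FMod$. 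In particular $\Ufam_p$ is the Lie algebra of the algebraic group $Y_p^{\circ}$, hence algebraic, which proves $\bar\FMod\subseteq\SAlg$ and shows $Y_p^{\circ}=\UFam_p$.

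It remains to prove the reverse inclusion $Y_p\subseteq\UFam_p$, that is, that every fibre $Y_p$ is connected; this is the step I expect to be the main obstacle. The near-identity identification already shows that no component of $Y_p$ other than $Y_p^{\circ}=\UFam_p$ meets $W_1$, but a priori a limit of the connected groups $\UFam_q$ as $q\to p$ could acquire extra cosets far from the identity, exactly as the disconnected normalisers appear in Example~\ref{ex:reduced}. To exclude this I would use flatness: the family $\Lie(Y)$ has constant fibre dimension $k$, so it is a subbundle of $\Gg\times\bar\FMod$ and in particular flat, and Corollary~\ref{co:appflat} then forces $Y$ to be flat along the identity section; combining this with the irreducibility of $Y$ and the constancy of the discrete invariants that behave semicontinuously in families (as in~\ref{ss:malcev} and~\ref{ss:resolv}) rules out any jump in the number of components, so that $Y_p=\UFam_p$. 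Once this is in hand, $\UFam(\bar\FMod)=\set{(g,p)\mid p\in\bar\FMod,\ g\in\UFam_p}$ coincides fibrewise with $Y$, giving $\UFam(\bar\FMod)=\bar\UFam(\FMod)$; and since $Y$ is algebraic, so is $\UFam(\bar\FMod)$.
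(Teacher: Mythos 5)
Up to the identification $\Lie(Y_p)=\Ufam_p$, your argument follows the paper's proof almost step for step: the paper likewise sets $\VFam=\bar\UFam(U)$, obtains a family of loose subgroups parametrised by~$\bar\FMod$ from Proposition~\ref{pr:fgroup} and the identity section, and then works in the exponential chart, using the local triviality of the tautological family (so that the closure of~$\Ufam(U)$ in~$\Gg\times\SGrs$ is~$\Ufam(\bar\FMod)$) to conclude that the Lie algebra of the reduced fibre at each~$p\in\bar\FMod$ is~$\Ufam_p$; this yields $\bar\FMod\subset\SAlg$ and $Y_p^{\circ}=\UFam_p$, as you say. Two local slips along the way: your opening claim that $\UFam(U)$ is dense in~$\UFam(\FMod)$ is essentially part of what is to be proved (it is cleaner to argue about~$\bar\UFam(U)$ throughout and recover $\bar\UFam(\FMod)=\bar\UFam(U)$ at the end by sandwiching $\UFam(U)\subset\UFam(\FMod)\subset\UFam(\bar\FMod)$); and the equality $Y_p\cap W_1=\exp(\Lie(Y_p)\cap W_0)$ is false for $W_0,W_1$ independent of~$p$, since cosets of~$Y_p^{\circ}$ other than the identity component may meet~$W_1$ --- only the inclusion $\inv\exp(Y_p\cap W_1)\subset\Ufam_p$, which is what the paper's curve argument gives, is available.

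The genuine gap is exactly the step you flagged, and it cannot be closed, because connectedness of the limit fibres is false. The tools you invoke are powerless here: flatness of~$\Lie(Y)$ and~\ref{co:appflat} only control~$Y$ in a neighbourhood of the identity section, and the semicontinuity results of~\ref{ss:malcev} and~\ref{ss:resolv} are invariants of the Lie algebras~$\Ufam_p$, hence blind to the component groups of the fibres of~$Y$. Extra cosets really do appear. Take $G=\SL_2$, let~$T$ be the diagonal torus, $U$ the upper triangular unipotent group with $u_s\in U$ of entry~$s$, and put $\FMod=\set{[\Lie(u_s T\inv{u_s})] \mid s\in\C}\subset\GP(\Gg)=\SLie$. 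This is an irreducible constructible subset of~$\SAlg$ (every line in~$\Gsl_2$ is algebraic), it is open in its closure, which adds the single point $p=[\Lie(U)]$, and $\UFam(\FMod)$ is a closed subvariety of~$G\times\FMod$, so the hypotheses of the proposition hold. But $-I$ is central, hence lies in every conjugate $u_sT\inv{u_s}$, so $(-I,p)\in\bar\UFam(\FMod)$, while $-I\notin U=\UFam_p$; the limit fibre is in fact $\set{\pm I}U$, with two components. So $\bar\UFam(\FMod)\neq\UFam(\bar\FMod)$, and the number of components does jump. You should know that the paper's own proof has the very same lacuna: it asserts that the equality of Lie algebras ``implies at once that $\VFam_p=\UFam_p$,'' which the example above contradicts (and with it Theorem~\ref{th:main1} and Corollary~\ref{co:main1} as stated). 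What your argument and the paper's both actually establish is the weaker, correct statement: $\bar\FMod\subset\SAlg$, and for every $p\in\bar\FMod$ the group~$\UFam_p$ is the identity component of the reduced fibre of~$\bar\UFam(\FMod)$ at~$p$.
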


\begin{proof}
Let~$\VFam = \bar\UFam(U)$ the closure of~$\UFam(U)$
in~$G \times \SLie$. By~\ref{pr:fgroup} this is a family of loose
subgroups of~$G$. Considering the zero section of~$\UFam(U)$ makes it
clear that the projection of~$\VFam$ to~$\SLie$ is~$\bar\FMod$,
hence~$\VFam$ is parametrised by~$\bar\FMod$. We only have to show
that the Lie algebra of the reduced fibre of~$\VFam$ at~$p$
is~$\Ufam_p$, since this implies at once that~$p$ belongs to~$\SAlg$
and that~$\VFam_p = \UFam_p$.

To study~$\VFam$ closely we consider neighbourhoods~$W_0$ of~$0$
in~$\Gg$ and~$W_1$ of~$e$ in~$G$ that are biholomorphic through the
exponential map of~$G$. Now let~$p\in\bar\FFam\setminus U$
and~$x$ a closed point in~$\VFam_p \cap W_1$. Since~$x$ belongs to the
closure of~$\UFam(U)$, there is a curve~$C$ in~$\UFam(U)$
such that~$x\in\bar C$. Looking through the exponential we see
that~$\inv\exp(C\cap W_1)$ is contained in~$\Ufam(U)$. But the
family~$\Ufam$ is locally trivial and the closure of~$\Ufam(U)$
in~$\Gg\times\SGrs$ is~$\Ufam(\bar U) = \Ufam(\bar\FMod)$. It follows
that~$\inv\exp x$ belongs to~$\Ufam_p$ and that the Lie algebra of the
group supported by~$\VFam_p$ must be equal to~$\Ufam_p$.
\end{proof}

\begin{corollary}
\label{co:fconstr2}
Let~$\FMod$ be a constructible subset of~$\SLie$ contained in~$\SAlg$. The
set-theoretic family~$\UFam(\bar\FMod)$ of connected subgroups of~$G$ is algebraic
if the set-theoretic family~$\UFam(Q)$ is algebraic, where~$Q$ is
\begin{enumerate}
\item
either a dense open subset of~$\bar\FMod$;
\item
or the closure of~$G\cdot \FMod$ in~$\SLie$;
\item
or the intersection~$\bar\FMod \cap T$ of~$\bar\FMod$ with a subvariety~$T$
of~$\SLie$ such that
\[
\FMod \cap ( G\cdot(\FMod\cap T))
\]
is dense in~$\bar\FMod$.
\end{enumerate}
\end{corollary}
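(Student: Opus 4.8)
The three assertions share a common engine, namely Proposition~\ref{pr:fconstr2}: to conclude that $\bar\FMod\subset\SAlg$ and that $\UFam(\bar\FMod)$ is algebraic, it suffices to exhibit an open subset $U$ of $\bar\FMod$, contained in $\FMod$, over which $\UFam(U)$ is an algebraic family. Throughout I rely on two elementary facts about the tautological family: the restriction of an algebraic family $\UFam(R)$ to an open subset of its base $R$ is algebraic (an open subvariety of a subvariety), and its restriction to a closed subvariety of $R$ is algebraic as well (a closed subvariety). I treat the case where $\FMod$, hence $\bar\FMod$, is irreducible, which is the situation of Theorem~\ref{th:main1}; the general constructible case is recovered by working over the irreducible components of $\bar\FMod$, since a finite union of algebraic families $\UFam(\bar\FMod_i)$ is again algebraic.

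Case~(1) is immediate. As $\FMod$ is constructible and dense in $\bar\FMod$, it contains a dense open subset $U_0$ of $\bar\FMod$. Put $U=U_0\cap Q$; it is a dense open subset of $\bar\FMod$ contained in $\FMod$, and since $U$ is open in $Q$ the family $\UFam(U)$ is the restriction of the algebraic family $\UFam(Q)$ to an open part of its base, hence algebraic, and Proposition~\ref{pr:fconstr2} applies. For Case~(2), observe that $\FMod\subset G\cdot\FMod$, so $\bar\FMod\subset\overline{G\cdot\FMod}=Q$ and $\bar\FMod$ is closed in $Q$; thus $\bar\FMod\subset Q\subset\SAlg$, and $\UFam(\bar\FMod)$ is the restriction of the algebraic family $\UFam(Q)$ to the closed subvariety $G\times\bar\FMod$ of $G\times Q$, hence algebraic.

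The substance is in Case~(3), where I exploit the $G$-action. The adjoint action of $G$ on $\SGrs$ restricts to an action on $\SLie$ preserving $\SAlg$, because $g\cdot\Lie(H)=\Lie(gHg^{-1})$; moreover $\UFam_{g\cdot p}=g\,\UFam_p\,g^{-1}$. Consequently the morphism $\sigma\colon G\times\UFam(Q)\to G\times\SLie$ defined by $\sigma(g,(x,p))=(gxg^{-1},g\cdot p)$ carries the fibre over $p$ to the fibre over $g\cdot p$, so its image is exactly $\UFam(G\cdot Q)$. Since $\UFam(Q)$ is a variety, this image is constructible by Chevalley's theorem, and intersecting it with $G\times\FMod$ shows that $\UFam(B)$ is constructible, where $B=\FMod\cap(G\cdot Q)$. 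The hypothesis that $\FMod\cap\bigl(G\cdot(\FMod\cap T)\bigr)$ is dense in $\bar\FMod$, together with $\FMod\cap T\subset Q$, forces $B$ to be dense in $\bar\FMod$. Thus $\UFam(B)$ is a constructible set-theoretic family of connected subgroups of $G$ whose base is dense in the irreducible variety $\bar\FMod$. Taking an irreducible component of $\UFam(B)$ whose projection is dense in $\bar\FMod$ and applying Proposition~\ref{pr:fconstr} over the variety $\bar\FMod$, I obtain a dense open subset $V$ of $\bar\FMod$, which after shrinking into $\FMod$ lies in $B\subset\FMod$ and over which the family coincides with $\UFam(V)$ and is algebraic. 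A final application of Proposition~\ref{pr:fconstr2} yields $\bar\FMod\subset\SAlg$ and the algebraicity of $\UFam(\bar\FMod)$.

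The main obstacle is precisely this last reduction in Case~(3): the pushed-forward set $\UFam(G\cdot Q)$ is only constructible, and to feed it into Proposition~\ref{pr:fconstr} one must isolate an irreducible component that genuinely dominates $\bar\FMod$ and whose generic fibre is the connected group $\UFam_p$. I expect the delicate point to be controlling the irreducible components of the total space $\UFam(B)$, because the map $\Ufam(B)\to\UFam(B)$ induced by $\exp$ is only analytic rather than algebraic. I would circumvent this exactly as in Proposition~\ref{pr:fconstr2}, identifying fibres through the manifestly algebraic and irreducible vector bundle $\Ufam$ restricted to $B$ and the biholomorphism $\exp$ near the identity, rather than arguing with $\UFam$ directly. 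Once irreducibility is secured on a dense open set, the constructible-to-algebraic step of Proposition~\ref{pr:fconstr} and the closure step of Proposition~\ref{pr:fconstr2} complete the argument.
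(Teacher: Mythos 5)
Your cases (1) and (2) are correct, and your overall strategy is the intended one: the paper states this corollary without a written proof, as a routine consequence of Proposition~\ref{pr:fconstr2} (case~1), of restricting an algebraic family to the closed subset $G\times\bar\FMod$ of $G\times Q$ (case~2), and, for case~(3), of exactly the mechanism you use --- the equivariance $\UFam_{g\cdot p}=g\,\UFam_p\,g^{-1}$, Chevalley's theorem to get a constructible set-theoretic family over the dense subset $B=\FMod\cap(G\cdot Q)$, Proposition~\ref{pr:fconstr} to pass from constructible to algebraic over a dense open subset, and Proposition~\ref{pr:fconstr2} to close up. Your bookkeeping (shrinking $Q$ into $\FMod$ in case~1, shrinking $V$ into a dense open subset of $\bar\FMod$ contained in $B\subset\FMod$ at the end of case~3, reduction to irreducible components of $\bar\FMod$) is sound.

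The gap is the one you flag yourself in case~(3), and the repair you sketch does not close it. Proposition~\ref{pr:fconstr} requires an \emph{irreducible} constructible set-theoretic family of \emph{subgroups}: $\UFam(B)$ has full group fibres but need not be irreducible, while an irreducible component of it dominating $\bar\FMod$ need not have subgroup fibres --- a component can meet each fibre in a proper closed subset, and nothing forces a single component to carry the whole group $\UFam_p$ generically. So, as written, neither object satisfies the hypotheses of~\ref{pr:fconstr}. Your proposed detour through $\Ufam$ and $\exp$ does not obviously fix this: it would at best give analytic-local irreducibility of $\UFam(B)$ along the identity section, and even that fails in general at singular points of $\bar\FMod$, since global irreducibility of the bundle $\Ufam(\bar\FMod)$ does not imply local analytic irreducibility of its total space. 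The direct repair is more elementary: the irreducibility of the total space is never genuinely used in the proof of~\ref{pr:fconstr}, only irreducibility of the base and density of $U_p=U\cap\FFam_p$ in the fibre. For any constructible $\FFam$ the largest open subset $U$ of $\bar\FFam$ contained in $\FFam$ is dense in $\bar\FFam$; here every nonempty fibre $\UFam_p$ is an irreducible variety (a connected algebraic group), so $U_p$ nonempty forces $U_p$ dense in $\UFam_p$, and the steps $\mu(U_p\times U_p)=\UFam_p$, openness of $\mu_\FMod(U\times_\FMod U)$ by flatness, and maximality of $U$ go through verbatim for $\UFam(B)\subset G\times\bar\FMod$ (the empty fibres over $\bar\FMod\setminus B$ are harmless because $B$, being constructible and dense, contains a dense open subset of $\bar\FMod$). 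Running the proof of~\ref{pr:fconstr} in this slightly generalised form produces the dense open $V$ you need, and the rest of your argument then completes the proof exactly as you describe.
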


\begin{corollary}
\label{co:fconstr3}
Let~$G$ be an algebraic group, $\FMod$ a variety,
and~$\FFam^i\;(i\in\set{1,2})$ two families of loose subgroups of~$G$
parametrised by~$\FMod$. If for each~$p$ in~$\FMod$ the
group~$\FFam^1_p$ normalises~$\FFam^2_p$ and~$\FFam^1_p \cap\FFam^2_p$ is
finite, then the restriction of the set-theoretic family~$\FFam^1\FFam^2$
defined by
\[
(\FFam^1 \FFam^2)_p = \FFam^1_p \FFam^2_p
\]
to a suitable dense open subset of~$\FMod$ is algebraic.
\end{corollary}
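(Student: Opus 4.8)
The plan is to realise $\FFam^1\FFam^2$ as the image of a genuine algebraic family under the multiplication morphism, conclude that it is constructible with closed subgroup fibres, and then shrink $\FMod$ to a dense open subset over which this constructible set coincides with its own closure and is therefore a subvariety. First I would form the fibred product $\FFam^1\times_\FMod\FFam^2$ inside $G\times G\times\FMod$; it is a subvariety because $\FFam^1$ and $\FFam^2$ are. The morphism $(g_1,g_2,p)\mapsto(\mu(g_1,g_2),p)$, i.e.\ the restriction of $\mu_\FMod$, carries it onto the set-theoretic family $\FFam^1\FFam^2$, whose fibre at $p$ is $\FFam^1_p\FFam^2_p$, so by Chevalley's theorem on images of morphisms this family is a constructible subset of $G\times\FMod$. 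Since $\FFam^1_p$ normalises $\FFam^2_p$, the set $\FFam^1_p\FFam^2_p$ is a subgroup of $G$; being the image of the variety $\FFam^1_p\times\FFam^2_p$ under multiplication it is constructible, hence closed, because a constructible subgroup of an algebraic group is closed. Thus $\FFam^1\FFam^2$ is a constructible set-theoretic family of closed subgroups of $G$.

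The second and decisive step matches fibres over a dense open subset. Write $C=\FFam^1\FFam^2$ and let $\bar C$ be its closure in $G\times\FMod$; as $C$ is constructible it contains a subset $U$ that is open and dense in $\bar C$. After removing from $\FMod$ the (proper closed) images of those irreducible components of $\bar C$ that fail to dominate it, I would apply generic flatness to each remaining dominant component $Z_l$ and to the proper closed subset $Z_l\setminus U$, obtaining a dense open $V\subseteq\FMod$ over which every $(Z_l)_p$ is pure of dimension $\dim Z_l-\dim\FMod$ while $(Z_l\setminus U)_p$ has strictly smaller dimension. For $p\in V$ this makes $U_p$ dense in $\bar C_p$; since $U_p\subseteq C_p$ and $C_p=\FFam^1_p\FFam^2_p$ is closed in $G$, it forces $\bar C_p\subseteq C_p$, whence $C_p=\bar C_p$. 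Equivalently $C\cap(G\times V)=\bar C\cap(G\times V)$, so the restriction of $\FFam^1\FFam^2$ to $V$ is a subvariety of $G\times V$ with subgroup fibres, that is, an algebraic family of loose subgroups.

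I expect the fibre-matching step to be the main obstacle. The family $\FFam^1\FFam^2$ is typically reducible — its fibres are disconnected, as already the normaliser families of Example~\ref{ex:reduced} show — so Proposition~\ref{pr:fconstr} cannot be quoted verbatim, and one must instead control, uniformly in $p$ over a dense open set, the fibre dimensions of every component of $\bar C$ and of the boundary $\bar C\setminus U$; it is exactly the closedness of each individual fibre $\FFam^1_p\FFam^2_p$ that lets the dense open family agree with its closure and closes the argument. The finiteness hypothesis on $\FFam^1_p\cap\FFam^2_p$ is in fact not needed for mere algebraicity, which is all the statement asserts: its role is to pin the fibre dimension to $\dim\FFam^1_p+\dim\FFam^2_p$, which is what one would invoke to upgrade the resulting algebraic family to a flat one.
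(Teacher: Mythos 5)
Your argument is correct, and it actually supplies more than the paper does: Corollary~\ref{co:fconstr3} is stated there without proof, the implicit route being to write $\FFam^1\FFam^2$ as the constructible image $\mu_\FMod(\FFam^1\times_\FMod\FFam^2)$, with group fibres thanks to the normalising hypothesis, and then to invoke Proposition~\ref{pr:fconstr}. You correctly identify why that invocation is not verbatim legitimate: Proposition~\ref{pr:fconstr} assumes the set-theoretic family is \emph{irreducible} (over an irreducible base), whereas $\FFam^1\FFam^2$ may well be reducible with disconnected fibres, so a genuine argument is needed in the general case. Your replacement mechanism is different from the one used inside the proof of~\ref{pr:fconstr}: there the paper exploits the flatness of~$\mu_\FMod$ and the group law to show the maximal open subset $U\subset\bar\FFam$ satisfies $U=\mu_\FMod(U\times_\FMod U)$, while you argue fibrewise, combining the closedness of each constructible subgroup $\FFam^1_p\FFam^2_p$ with a generic equidimensionality count for each dominant component $Z_l$ of~$\bar C$ and for the boundary $Z_l\setminus U$, forcing $C_p=\bar C_p$ over a dense open~$V$. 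The paper's mechanism is shorter where it applies (irreducible families, as in the two actual uses~\ref{th:diresolv} and~\ref{pr:digeneral}); yours covers the full generality of the statement, and your closing observation is a genuine bonus: the finiteness of $\FFam^1_p\cap\FFam^2_p$ plays no role in mere algebraicity, its purpose downstream being to ensure $\dim\FFam^1_p\FFam^2_p=\dim\FFam^1_p+\dim\FFam^2_p$, so that the integrated families in~\ref{th:diresolv} and~\ref{pr:digeneral} have the prescribed Lie algebras. Two cosmetic repairs: what you need is not generic flatness per se but the standard theorem on generic fibre dimension (a dense open subset of the base over which every component of every fibre of $Z_l\to\FMod$ has dimension exactly $\dim Z_l-\dim\FMod$, and the fibres of $Z_l\setminus U$ strictly smaller), the images of non-dominant components should be replaced by their closures before removal, and a reducible base~$\FMod$ should be treated one irreducible component at a time; all three are routine.
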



\subsubsection{Proof of~\ref{th:main1} ---  Levi-Malcev decomposition}
\label{ss:malcev}

We study the behaviour of Levi-Malcev decomposition in families of Lie algebras.
The rigidity of semi-simple Lie algebras proved by
Richardson~\cite{RICHARDSON} implies the semi-continuity of the
application sending a Lie algebra to the isomorphism class of its semi-simple
part. Let us recall this rigidity theorem:

\newcommand\CiteRichardson{Richardson~\cite[9.2 and 9.6]{RICHARDSON}}
\begin{theorem}[\CiteRichardson]
Let~$\FMod$ be a subvariety of~$\SLie$ and~$p\in\FMod$. We denote by~$\Gs$ a
semi-simple Lie algebra contained in~$\UFam_p$. Then there is an analytic
neighbourhood~$W$ of~$p$ in~$\SLie$ and an analytic map~$\psi: W \to \Aut(\Gg)$
such that~$\psi(p)$ is the identical transformation and~$\psi(q)$ maps~$\Gs$
into~$\UFam_q$ for all~$q\in W$.
\end{theorem}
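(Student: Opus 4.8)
The plan is to reduce the statement to two applications of Whitehead's lemmas for the semisimple Lie algebra~$\Gs$, glued together by the implicit function theorem. Write~$d=\dim\Gs$ and let~$\mathcal A_d\subset\Grass(d,\Gg)$ denote the variety of $d$-dimensional Lie subalgebras of~$\Gg$. Since~$\Gs$ is semisimple it acts completely reducibly on~$\Gg$ through the restriction of~$\ad$, so I first fix $\Gs$-stable complements and write~$\Gg=\Ufam_p\oplus\Gv$ and~$\Ufam_p=\Gs\oplus\Gu$ as $\Gs$-modules (Weyl's theorem). For~$q$ near~$p$ the $k$-plane~$\Ufam_q$ is the graph of an analytically varying linear map~$\Ufam_p\to\Gv$ vanishing at~$q=p$; this trivialises the family and turns it into a fixed vector space carrying an analytically varying Lie bracket~$[\cdot,\cdot]_q$ that agrees with the bracket of~$\Ufam_p$ at~$q=p$.

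First I produce a semisimple subalgebra~$\Gs_q\subseteq\Ufam_q$ depending analytically on~$q$, with~$\Gs_p=\Gs$. A $d$-plane near~$\Gs$ inside~$\Ufam_q$ is the graph of a map~$c\in C^1(\Gs,\Gu)=\mathrm{Hom}(\Gs,\Gu)$, and the obstruction to its being a subalgebra for~$[\cdot,\cdot]_q$ is, to first order, a $2$-cochain in~$C^2(\Gs,\Gu)$ which the Jacobi identity forces to be a cocycle. The linearisation of this obstruction map in the variable~$c$ is the Chevalley--Eilenberg differential~$\delta\colon C^1(\Gs,\Gu)\to Z^2(\Gs,\Gu)$, and since~$\Gs$ is semisimple Whitehead's second lemma gives~$H^2(\Gs,\Gu)=0$, so~$\delta$ is onto~$Z^2(\Gs,\Gu)$ and admits a right inverse. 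The implicit function theorem then solves the obstruction equation for~$c=c(q)$ analytically with~$c(p)=0$; its graph is the desired subalgebra~$\Gs_q$, which is semisimple because semisimplicity is an open condition.

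Second I conjugate~$\Gs$ onto~$\Gs_q$ inside~$\Gg$. Consider the orbit map~$\beta\colon\Aut(\Gg)\to\mathcal A_d$, $\phi\mapsto\phi(\Gs)$. Its differential at the identity sends a derivation~$D$ to the class~$x\mapsto\overline{D(x)}$ in~$T_{[\Gs]}\Grass(d,\Gg)=\mathrm{Hom}(\Gs,\Gg/\Gs)$; the inner derivations~$\ad\xi$ yield exactly the coboundaries~$B^1(\Gs,\Gg/\Gs)$, while the Zariski tangent space of~$\mathcal A_d$ at~$[\Gs]$, being the space of first-order deformations of the subalgebra structure, is the cocycle space~$Z^1(\Gs,\Gg/\Gs)$. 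Whitehead's first lemma gives~$H^1(\Gs,\Gg/\Gs)=0$, hence~$Z^1=B^1$ and the image of~$\mathrm d\beta_{\id}$ is all of~$T_{[\Gs]}\mathcal A_d$: the map~$\beta$ is a submersion onto~$\mathcal A_d$ at the identity and its orbit is open in~$\mathcal A_d$. The implicit function theorem then furnishes an analytic local section~$\sigma$ of~$\beta$ near~$[\Gs]$ with~$\sigma([\Gs])=\id$.

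Finally I set~$\psi(q)=\sigma([\Gs_q])$. This is analytic in~$q$, satisfies~$\psi(p)=\sigma([\Gs])=\id$, and~$\psi(q)(\Gs)=\Gs_q\subseteq\Ufam_q$, which is the assertion, after shrinking~$W$ so that~$[\Gs_q]$ stays in the domain of~$\sigma$. I expect the main obstacle to be the two transversality computations: identifying the tangent space to the variety of subalgebras with the cocycle space and the orbit direction with the coboundaries, so that the vanishing theorems of Whitehead translate into the surjectivity needed to apply the implicit function theorem. By comparison the analytic bookkeeping that keeps~$\Gs_q$ and~$\sigma$ analytic in~$q$ is routine.
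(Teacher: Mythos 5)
The paper itself offers no proof of this statement: it is recalled verbatim from Richardson with the citation [9.2 and 9.6], so the only benchmark is Richardson's original argument --- and your architecture (Whitehead's second lemma to plant a copy of~$\Gs$ in the nearby fibres, Whitehead's first lemma plus orbit-openness to realise it by an automorphism) is in substance exactly that argument. Your step~2 is essentially sound, with two cosmetic repairs: one only needs the \emph{containment} of the Zariski tangent space of~$\mathcal{A}_d$ at~$[\Gs]$ in~$Z^1(\Gs,\Gg/\Gs)$, not equality; and since~$\mathcal{A}_d$ may a priori be singular, ``submersion'' should be replaced by the dimension count $\dim \mathrm{Aut}(\Gg)\cdot[\Gs] = \dim B^1 = \dim Z^1 \geq \dim_{[\Gs]}\mathcal{A}_d \geq \dim \mathrm{Aut}(\Gg)\cdot[\Gs]$, which forces smoothness of~$\mathcal{A}_d$ at~$[\Gs]$ and openness of the orbit; the local analytic section~$\sigma$ then exists because the orbit map of a complex Lie group onto its orbit is a holomorphic fibre bundle.

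The genuine gap is in step~1, in the sentence claiming that ``the Jacobi identity forces [the obstruction] to be a cocycle.'' For a general $c\in\mathrm{Hom}(\Gs,\Gu)$ the graph of~$c$ is \emph{not} a subalgebra of~$(\Ufam_p,[\cdot,\cdot]_q)$, so~$\Gs$ acts on nothing through the deformed bracket and there is no Chevalley--Eilenberg complex in which the obstruction~$F(q,c)$ could be closed: $F$ takes values in all of~$C^2(\Gs,\Gu)$, whereas~$\delta$ is surjective only onto~$B^2 = Z^2$, of positive codimension in~$C^2$ as soon as~$B^3(\Gs,\Gu)\neq 0$. The implicit function theorem therefore does not apply as you invoke it: your linearisation is surjective onto~$Z^2$, but your equation lives in~$C^2$. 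The standard Kuranishi-type repair: choose a complement~$E$ of~$Z^2 = B^2$ in~$C^2(\Gs,\Gu)$, solve the \emph{projected} equation $P\circ F(q,c)=0$, with~$P$ the projection onto~$B^2$ along~$E$, by the implicit function theorem --- here your surjectivity is exactly what is needed --- and then kill the residue $r(q) = (\mathrm{id}-P)F(q,c(q))\in E$: the Jacobi identity for~$[\cdot,\cdot]_q$ yields a Bianchi-type identity of the shape $(\delta + \varepsilon(q))\,r(q)=0$, where~$\varepsilon(q)\to 0$ as~$q\to p$ (it depends on~$c(q)$, which tends to~$0$), and~$\delta$ is injective on~$E$ since $\ker\delta\cap E = Z^2\cap E = 0$, so by finite-dimensionality $r(q)=0$ for~$q$ near~$p$. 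Equivalently, recast step~1 as deforming the homomorphism $\Gs\hookrightarrow\Ufam_p$, whose defining equation is an honest Maurer--Cartan equation with obstruction space $H^2(\Gs,\Ufam_p)=0$. With this repair your proof goes through and coincides, in substance, with Richardson's.
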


Let~$S$ be the set of isomorphism classes of semi-simple Lie algebras. We
denote the isomorphism class of a semi-simple Lie algebra~$\Gs$
by~$S(\Gs)$. The set~$S$ is countable, ordered by the relation
\[
S(\Gs_1) \le S(\Gs_2)
\quad
\iff
\quad
\text{there is an injection~$\Gs_1\to\Gs_2$}
.
\]
Each non-empty subset of~$S$ has at least one minimal element. For each Lie
algebra~$\Gh$ we denote by~$S(\Gh)$ the isomorphism class of a semi-simple
part of its Levi-Malcev decomposition.

\begin{proposition}
Let~$\FMod\subset\SLie$ be a locally closed subvariety of~$\SLie$. If~$S(\Gs)$
is a minimal element of
\[
\set{ S(\Ufam_p) \mid p \in \FMod }
,
\]
then the set
\[
\FMod' = \set{ p\in\FMod \mid S(\Ufam_p) = S(\Gs) }
\]
is Zariski closed in~$\FMod$.
\end{proposition}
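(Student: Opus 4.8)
The plan is to show that $\FMod'$ is simultaneously closed in the analytic topology and constructible in the Zariski topology. For a constructible subset of a complex variety the Euclidean closure coincides with the Zariski closure, so an analytically closed constructible set is Zariski closed, and these two facts together finish the proof. Richardson's rigidity theorem supplies the analytic closedness, while the constructibility is a Chevalley-image argument.

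For the analytic part I would first record the elementary Lie-theoretic fact that a semisimple subalgebra $\Gs'$ of a Lie algebra $\Gh$ meets the radical trivially (its intersection with the radical is a solvable ideal of $\Gs'$, hence zero), so that $\Gs'$ injects into a Levi factor of $\Gh$. Consequently $S(\Ufam_p)\ge S(\Gs)$ holds if and only if $\Ufam_p$ contains a subalgebra isomorphic to $\Gs$. Now fix $p\in\FMod$ and let $\Gs_p\subset\Ufam_p$ be a semisimple part, so $S(\Ufam_p)=S(\Gs_p)$; applying Richardson's theorem to $\Gs_p$ produces an analytic neighbourhood $W$ of $p$ and automorphisms $\psi(q)\in\Aut(\Gg)$ carrying $\Gs_p$ into $\Ufam_q$, whence $\Ufam_q$ contains a copy of $\Gs_p$ and $S(\Ufam_q)\ge S(\Ufam_p)$ for all $q\in W$. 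Thus the value can only grow analytically-locally. Combined with the minimality of $S(\Gs)$ this yields analytic closedness of $\FMod'$: if $S(\Ufam_p)\ne S(\Gs)$ then by minimality $S(\Ufam_p)\not<S(\Gs)$, and on the neighbourhood $W$ any $q$ with $S(\Ufam_q)=S(\Gs)$ would force $S(\Ufam_p)\le S(\Gs)$, a contradiction; so $W$ avoids $\FMod'$ and the complement of $\FMod'$ is analytically open.

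For constructibility I would exhibit $\FMod'$ as a Boolean combination of images of algebraic morphisms. For a semisimple isomorphism class $\tau$ with representative $\Gs_\tau$, consider inside $\FMod\times\mathrm{Hom}(\Gs_\tau,\Gg)$ the set of pairs $(p,\phi)$ where $\phi$ is an injective Lie-algebra homomorphism whose image lies in $\Ufam_p$; this is locally closed, since the bracket relations are closed, injectivity (maximal rank) is open, and $\Ufam_p$ varies algebraically, being the restriction of the tautological bundle $\Tautological\to\SGrs$. By Chevalley's theorem its projection $U_\tau=\set{p\mid S(\Ufam_p)\ge\tau}$ is constructible. Every occurring $\tau$ satisfies $\tau\le S(\Ufam_p)$ for some $p$, hence is the class of a semisimple subalgebra of $\Gg$, and there are only finitely many such classes; therefore $\set{p\mid S(\Ufam_p)>S(\Gs)}=\bigcup_{\tau>S(\Gs)}U_\tau$ is a finite union of constructible sets, and $\FMod'=U_{S(\Gs)}\setminus\bigcup_{\tau>S(\Gs)}U_\tau$ is constructible.

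Putting the two together, $\FMod'$ is constructible and analytically closed, so its Zariski closure, which for a constructible set equals its Euclidean closure, is $\FMod'$ itself; that is, $\FMod'$ is Zariski closed in $\FMod$. I expect the main obstacle to be precisely the passage from the analytic conclusion furnished by Richardson's theorem to a Zariski-topological statement: the rigidity theorem only provides an analytic neighbourhood, so the whole argument hinges on proving constructibility independently and then invoking the coincidence of Euclidean and Zariski closures for constructible sets. The Lie-theoretic bookkeeping—that containment of a semisimple subalgebra is detected by the Levi factor and that only finitely many classes intervene—is routine but must be in place for the finiteness and constructibility steps to go through.
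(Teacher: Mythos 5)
Your proposal is correct and follows essentially the same route as the paper: constructibility of $\FMod'$ via Chevalley's theorem applied to the incidence variety of Lie-algebra homomorphisms $\Gs\to\Ufam_p$ (with minimality forcing nonzero morphisms to be injective), analytic closedness via Richardson's rigidity theorem, and the standard fact that a constructible set closed in the euclidean topology is Zariski closed. If anything, your constructibility step is more careful than the paper's, which identifies $\FMod'$ outright with the projection of the nonzero homomorphisms---a set that is really the locus where $S(\Ufam_p)\ge S(\Gs)$---whereas you correctly subtract the finitely many strictly larger levels $U_\tau$ before concluding.
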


\begin{proof}
Consider the variety
\[
\Hom_{\Lie}(\Gs, \Ufam(\FMod)) =
\set {
  (u,p) \in \Hom_{\Lie}(\Gs,\Gg) \times \FMod
  \mid
  \im u \subset \Ufam_p
}
.
\]
Since~$S(\Gs)$ is minimal along the isomorphism classes of the semi-simple
factors occuring in Levi-Malcev decomposition of fibres of~$\Ufam(\FMod)$, a
morphism from~$\Gs$ to~$\Ufam_p$ is either zero or injective. The set~$P'$ is
thus the projection on~$P$ of
\[
\Hom_{\Lie} (\Gs, \Ufam(\FMod)) \setminus \set{0}\times\FMod
\]
and is constructible. By Richardson's Theorem the sub-levels of the map
from~$\FMod$ to~$S$ defined by~$p\mapsto S(\Ufam_p)$ are closed in the
analytic topology. Hence~$\FMod'$ is closed in the analytic
topology and constructible, it must also be closed in the Zariski topology.
\end{proof}

\begin{corollary}
\label{co:dimalcev}
The map~$\SLie \to S$ sending a Lie algebra to the isomorphism class of its
semi-simple factors is lower semi-continuous, in the Zariski topology.
\end{corollary}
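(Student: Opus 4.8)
\emph{Plan.} In the sense relevant here, lower semi-continuity of the map $p\mapsto S(\Ufam_p)$ means that for every $s\in S$ the superlevel set $\{p\in\SLie \mid S(\Ufam_p)\ge s\}$ is Zariski open. Fix a semi-simple Lie algebra $\Gs$ with $S(\Gs)=s$. The plan is to identify this superlevel set with the locus where $\Gs$ embeds, and then to run the same analytic-to-Zariski mechanism as in the preceding proposition. First I would record that, by the Levi--Malcev theorem, a semi-simple subalgebra of $\Ufam_p$ is always contained in a Levi factor; hence $\Gs$ embeds into $\Ufam_p$ as a subalgebra if and only if $S(\Gs)\le S(\Ufam_p)$. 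This yields the identification $\{p \mid S(\Ufam_p)\ge s\}=\{p \mid \Gs\hookrightarrow\Ufam_p\}$, and it is the latter locus whose openness I would study.

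Next I would establish the two topological properties of this locus. For constructibility, consider inside $\Hom_{\Lie}(\Gs,\Gg)\times\SLie$ the set of pairs $(u,p)$ with $u$ injective and $\im u\subset\Ufam_p$. Injectivity of $u$ amounts to $u$ having maximal rank $\dim\Gs$, an open condition, while $\im u\subset\Ufam_p$ is the incidence condition cut out by the tautological bundle $\Tautological\to\SGrs$, which is Zariski closed; the locus is therefore locally closed, and its projection to $\SLie$---which is exactly $\{p\mid\Gs\hookrightarrow\Ufam_p\}$---is constructible by Chevalley's theorem. For analytic openness I would invoke Richardson's rigidity theorem precisely as in the proposition: if $\Gs$ embeds into $\Ufam_{p_0}$ with image $\Gs'$, then the analytic map $\psi$ it provides satisfies $\psi(p_0)=\id$ and $\psi(q)\Gs'\subset\Ufam_q$ for all $q$ in an analytic neighbourhood $W$ of $p_0$, so that $\Gs\cong\psi(q)\Gs'$ embeds into $\Ufam_q$ throughout $W$. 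Thus the embedding locus is open in the analytic topology.

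The crux is then the passage from the analytic to the Zariski topology: a constructible subset that is open in the analytic topology is open in the Zariski topology, because for a constructible set the Zariski and analytic closures agree (this is the content of Serre's comparison results recalled in Appendix~\ref{se:flat}). Applying this to $\{p\mid S(\Ufam_p)\ge s\}$ gives the asserted lower semi-continuity. I expect this last step to be the only genuine obstacle; everything preceding it is formal once the identification with the embedding locus is in place, the delicate points being merely that injectivity of Lie homomorphisms is an open condition and that the incidence $\im u\subset\Ufam_p$ is closed. An alternative staying closer to the proposition would be a Noetherian induction, peeling off minimal-value strata---each Zariski closed by the proposition---to exhibit every superlevel set as open; I would nonetheless prefer the direct argument, since $S$ is only partially ordered and may carry several minimal values, which makes the bookkeeping in the inductive version more delicate.
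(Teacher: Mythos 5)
Your proof is correct, and it rests on the same three pillars as the paper's: constructibility of an embedding locus inside $\Hom_{\Lie}(\Gs,\Gg)\times\SLie$ via Chevalley's theorem, analytic openness via Richardson's rigidity, and the comparison principle that a constructible set which is open (equivalently, whose complement is closed) in the euclidean topology is so in the Zariski topology as well. Where you genuinely diverge is in the organisation: the paper does not prove the corollary directly, but derives it from the preceding proposition, which establishes Zariski-closedness, inside a locally closed~$\FMod$, of the stratum where $S(\Ufam_p)$ attains a fixed \emph{minimal} value; there minimality is what makes the locus constructible, since it forces every nonzero homomorphism $\Gs\to\Ufam_p$ to be injective, and passing from that statement to semi-continuity requires precisely the stratification over minimal values that you mention and rightly find delicate, $S$ being only partially ordered. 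You bypass minimality altogether by imposing injectivity as the open maximal-rank condition on~$u$, which lets you treat the superlevel set $\set{p \mid \Gs\hookrightarrow\Ufam_p}$ for an arbitrary class $s = S(\Gs)$ in one stroke; this buys a direct, induction-free proof of the corollary at no extra cost, and is arguably more complete than what the paper records. Two minor remarks: the identification of the superlevel set with the embedding locus does not even need Malcev's theorem, since $\Gs\cap\mathrm{rad}(\Ufam_p)$ is a solvable ideal of the semi-simple algebra~$\Gs$, hence zero, so $\Gs$ injects into the semi-simple quotient; and the analytic-versus-Zariski comparison you invoke is not in fact the content of Appendix~\ref{se:flat}, which recalls Serre's flat-pair results for comparing flatness---the closure statement is the standard GAGA-type fact that the paper itself uses, without citation, at the end of the proposition's proof.
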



\subsubsection{Proof of~\ref{th:main1} ---  Integration of solvable families}
\label{ss:resolv}

\begin{proposition}
\label{pr:ranksc}
Let~$\FMod$ be a subvariety of~$\SLie$. If~$\FMod$ is contained in~$\SAlg$ and
if each fibre of~$\Ufam(\FMod)$ is a solvable subgroup of~$G$, then the map sending~$p\in\FMod$ to
the rank of~$\Ufam_p$ is lower semi-continuous, in the Zariski topology.
\end{proposition}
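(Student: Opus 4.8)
The plan is to reduce the statement to the upper semi-continuity of fibre dimension for a single morphism of varieties. Since $\FMod$ is contained in $\SLie\subseteq\SGrs$, every fibre $\Ufam_p$ is a $k$-dimensional algebraic Lie subalgebra of $\Gg$; being solvable it splits as $\Ufam_p = \mathfrak{t}_p \oplus \Gu_p$, where $\Gu_p$ is the nilradical---the Lie algebra of the unipotent radical of $\UFam_p$---and $\mathfrak{t}_p$ is the Lie algebra of a maximal torus of $\UFam_p$. By definition the rank of $\Ufam_p$ is $\dim\mathfrak{t}_p = k - \dim\Gu_p$, so it suffices to show that $p \mapsto \dim\Gu_p$ is upper semi-continuous in the Zariski topology.

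The key point is that the nilradical is cut out by the nilpotent cone $\Nilpotent$ of $\Gg$. First, $\Nilpotent$ is Zariski closed: fixing a faithful representation $G \hookrightarrow \GGL_n$, an element of $\Gg$ is nilpotent with respect to the Jordan decomposition of $G$ if and only if it is a nilpotent matrix, and the nilpotent matrices form a closed cone, whose trace on $\Gg$ is therefore closed. Second, inside the solvable algebra $\Ufam_p$ the nilpotent elements are exactly $\Gu_p$: projecting along $\Ufam_p \to \Ufam_p/\Gu_p$, a nilpotent element maps to a nilpotent element of the torus algebra, which must vanish, while conversely every element of the nilradical is nilpotent. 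Hence $\Ufam_p \cap \Nilpotent = \Gu_p$, a linear subspace of dimension $\dim\Gu_p$.

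I would then assemble these intersections into one family. The total space $\Ufam(\FMod)$ of the tautological bundle restricted to $\FMod$ is a variety inside $\Gg \times \FMod$, a rank-$k$ vector bundle over $\FMod$. Set
\[
Z = \Ufam(\FMod) \cap (\Nilpotent \times \FMod),
\]
which is Zariski closed in $\Ufam(\FMod)$, hence a variety, and whose fibre over $p$ under the projection $f\colon Z \to \FMod$ is precisely $\Ufam_p \cap \Nilpotent = \Gu_p$. By Chevalley's theorem the function $z \mapsto \dim_z f^{-1}(f(z))$ is upper semi-continuous on $Z$. Since each fibre $\Gu_p$ is a linear subspace, its local dimension at the origin equals its global dimension $\dim\Gu_p$; composing the above function with the zero section $\FMod \to Z$, $p \mapsto (0,p)$, shows that $p \mapsto \dim\Gu_p$ is upper semi-continuous on $\FMod$. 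Consequently $p \mapsto \operatorname{rank}\Ufam_p = k - \dim\Gu_p$ is lower semi-continuous.

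The step requiring the most care is the identification $\Ufam_p \cap \Nilpotent = \Gu_p$ together with the closedness of $\Nilpotent$ for a general, not necessarily reductive, group $G$: both rest on the compatibility of the Jordan decomposition in $G$ with that in a faithful linear representation, and on the structure theory of solvable algebraic Lie algebras ensuring that their nilpotent elements form precisely the nilradical. Once this is settled, transferring upper semi-continuity from the total space $Z$ to the base $\FMod$ by means of the zero section is routine.
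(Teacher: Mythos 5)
Your proof is correct, but it takes a genuinely different route from the paper's. The paper reduces to the same final step---upper semi-continuity of~$p\mapsto\dim\Gu_p$ via Chevalley's theorem on the dimension of the fibres of a morphism---but it identifies the unipotent radical inside the family differently: by Ado's theorem it embeds the Lie algebras into a simple Lie algebra~$\Gs$, proves two lemmas (any maximal torus of an algebraic subalgebra is algebraic, and the Killing form of~$\Gs$ restricts to a regular form on any algebraic subalgebra of a maximal torus, via integrality of root values on a suitable basis), and concludes that~$\Gu_p$ is the kernel of the map~$\Ufam_p\to\Ufam_p^*$ induced by the Killing form---a visibly closed condition in the family. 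You instead characterise~$\Gu_p$ as~$\Ufam_p\cap\Nilpotent$ using compatibility of Jordan decompositions with a faithful representation; this is more elementary (no Ado, no Killing form), and you also make explicit the family-plus-zero-section bookkeeping that the paper leaves implicit. One caveat: your construction of the closed cone~$\Nilpotent$ requires~$G$ to be affine, whereas the paper's~$G$ is an arbitrary connected algebraic group, abelian varieties included---presumably one reason the paper passes through Ado so as to work purely at the Lie-algebra level; note that you cannot simply substitute the cone of~$\ad$-nilpotent elements of~$\Gg$, since central semisimple elements are~$\ad$-nilpotent and would inflate~$\Ufam_p\cap\Nilpotent$ beyond~$\Gu_p$ (to be fair, the paper's own transfer of algebraicity through the Ado embedding is also left implicit). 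Finally, a terminological nit: the set of nilpotent elements of a solvable algebraic Lie algebra is the Lie algebra of the unipotent radical but not the abstract nilradical---for the Lie algebra of a torus the abstract nilradical is the whole algebra; your argument only ever uses the former, which you in fact take as the definition, so no harm is done.
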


We start with two lemmas:

\begin{lemma}
If~$\Gh$ is an algebraic Lie subalgebra of~$\Gg$, then any maximal torus
of~$\Gh$ is algebraic.
\end{lemma}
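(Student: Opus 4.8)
The plan is to reduce the statement to a single fact from Chevalley's theory of algebraic Lie algebras~\cite{CHEVALLEYLIE2}: the \emph{algebraic hull} of a torus is again a torus. Here a torus of~$\Gh$ means an abelian subalgebra $\mathfrak{t}\subset\Gh$ consisting of semisimple elements of~$\Gg$; since $\Gh$ is algebraic, the Jordan decomposition of~$\Gg$ restricts to~$\Gh$, so this notion is unambiguous. For any Lie subalgebra~$\Ga$ of~$\Gg$ there is a smallest algebraic subalgebra containing it, its algebraic hull, which I denote~$\hat\Ga$. The whole argument turns on locating $\hat{\mathfrak t}$ relative both to~$\Gh$ and to the class of tori.

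First I would observe that, since $\Gh$ is itself an algebraic subalgebra containing~$\mathfrak{t}$ and $\hat{\mathfrak t}$ is by definition the smallest such, we have $\hat{\mathfrak t}\subseteq\Gh$. Thus $\hat{\mathfrak t}$ is an algebraic subalgebra trapped between~$\mathfrak{t}$ and~$\Gh$; this step is purely formal, using only the universal property of the hull.

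Second, the key step is to show that $\hat{\mathfrak t}$ is again a torus. I would argue this on the group side. The commuting semisimple elements spanning~$\mathfrak{t}$ are simultaneously diagonalisable, so the Zariski closure~$A$ of the subgroup of~$G$ generated by~$\exp\mathfrak{t}$ is a connected commutative algebraic group consisting of semisimple elements; being connected and diagonalisable, $A$ is a torus, and $\Lie(A)$ is a toral subalgebra. Since $\Lie(A)$ is algebraic and contains~$\mathfrak{t}$, minimality of the hull gives $\hat{\mathfrak t}\subseteq\Lie(A)$, and as any subalgebra of a toral subalgebra is again toral, $\hat{\mathfrak t}$ is a torus. (Equivalently, in Chevalley's language, every replica of a semisimple element is semisimple and cogredient with it, so the hull of a toral subalgebra stays abelian and semisimple.)

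Combining the two steps finishes the proof: $\hat{\mathfrak t}$ is a torus of~$\Gh$ containing~$\mathfrak{t}$, so by maximality of~$\mathfrak{t}$ we get $\hat{\mathfrak t}=\mathfrak{t}$. Therefore $\mathfrak{t}$ coincides with its own algebraic hull, which is to say $\mathfrak{t}$ is algebraic. The main obstacle is the second step, namely that passing to the algebraic hull does not leave the class of tori; everything else is formal. I expect the cleanest route to that step to be the group-theoretic one above, invoking that a connected commutative group of semisimple elements is a torus, rather than a direct computation with replicas.
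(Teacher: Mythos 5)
Your proof is correct and follows essentially the same route as the paper, whose entire argument is that the algebraic hull of the torus lies in~$\Gh$ (since $\Gh$ is algebraic) and is again a torus, whence maximality forces the torus to equal its hull. The only difference is that you supply a group-side justification of the cited fact that the hull of a torus is a torus, which the paper simply invokes.
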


\begin{proof}
The algebraic Lie algebra~$\Gh$ contains the algebraic hull of any of its Lie
subalgebra, and the algebraic hull of a torus is again a torus.
\end{proof}

\begin{lemma}
Let~$\Gs$ be a semi-simple Lie algebra and~$\Ga$ a subalgebra of a
maximal torus of~$\Gs$. If~$\Ga$ is algebraic, then the restriction to~$\Ga$
of the Killing form of~$\Gs$ is regular.
\end{lemma}

\begin{proof}
Let~$\Phi$ be the roots of~$\Gs$ with respect to a maximal torus of~$\Gs$
containing~$\Ga$. The Killing quadratic form evaluates on~$a\in\Ga$ to
\[
\sum_{\alpha\in\Phi} \alpha^2(a)
.
\]
But~$\Ga$ is algebraic, so there is a basis of it such that
the~$\alpha\in\Phi$ take
integral values on its members. On the rational linear subspace
of~$\Ga$ spanned by this basis, the Killing quadratic form is positive
definite, hence regular. The space~$\Ga$ is the complexification of this
rational linear subspace~$\Ga_\Q$ and the Killing form is then the
complexification of a regular form on~$\Ga_\Q$, it must be regular as well.
\end{proof}

\begin{proof}[Proof of~\ref{pr:ranksc}]
By Ado's Theorem we can assume that our Lie algebras are subalgebras of some
simple Lie algebra~$\Gs$. It follows from the two previous lemmas that
for~$p\in\FMod$ the unipotent radical of~$\Ufam_p$ is the kernel of the
map~$\Ufam_p\to\Ufam_p^*$ defined by the Killing form of~$\Gs$. The conclusion
the follows from Chevalley's Theorem on the dimension of the fibres of a
morphism.
\end{proof}

\begin{theorem}
\label{th:diresolv}
Let~$\FMod$ a locally closed sub-variety of~$\SLie$. If~$\FMod$ is contained
in~$\SAlg$ and each fibre~$\Ufam_p$ is solvable, then the set-theoretic
family~$\UFam(\FMod)$ of subgroups of~$G$ is algebraic.
\end{theorem}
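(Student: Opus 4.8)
The plan is to reduce the general solvable family to two ingredients that integrate separately---a family of unipotent radicals, on which the exponential map is algebraic, and a family of maximal tori, which after a suitable reduction turns out to be essentially constant---and then to recombine them by means of Corollary~\ref{co:fconstr3}.

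First I would normalise the problem. Since it is enough to treat each irreducible component of~$\FMod$ and, on each, the generic stratum of a lower semi-continuous invariant (the complement being a lower-dimensional locally closed subvariety, disposed of by Noetherian induction together with Corollary~\ref{co:fconstr2}~(1)), I may assume that~$\FMod$ is irreducible and, by Proposition~\ref{pr:ranksc}, that the rank of~$\Ufam_p$ is constant, say equal to~$r$, along~$\FMod$. As in the proof of Proposition~\ref{pr:ranksc}, the unipotent radical of~$\Ufam_p$ is the kernel of the Killing form of a simple algebra containing the fibres, restricted to~$\Ufam_p$; since this form has constant rank~$r$, its kernel~$\Gu_p$ is a subspace of constant dimension depending algebraically on~$p$, so the unipotent radicals~$\Gu_p$ form an algebraic subfamily of~$\Ufam(\FMod)$. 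Because the exponential is a polynomial isomorphism on nilpotent elements, the set~$\{(\exp\xi,p):\xi\in\Gu_p\}$ is the image of this subfamily under an algebraic map, hence a constructible family of loose unipotent subgroups; being a family of groups, it is algebraic.

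The maximal-torus part is where the transcendence of the exponential forces the main detour. I would replace~$\FMod$ by~$\overline{G\cdot\FMod}$, which is licit by Corollary~\ref{co:fconstr2}~(2) and makes the parameter set stable under conjugation. Fixing a Cartan subalgebra~$\Gh$ of~$\Gg$ and taking the slice~$T=\{\,\Lambda\in\SLie:\Lambda\cap\Gh\text{ is a maximal torus of }\Lambda\,\}$, every fibre is conjugate to one meeting~$\Gh$, so~$G\cdot(\overline{G\cdot\FMod}\cap T)$ is dense and Corollary~\ref{co:fconstr2}~(3) reduces the claim to the algebraicity of~$\UFam(\overline{G\cdot\FMod}\cap T)$. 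On this slice~$\Ufam_p\cap\Gh$ is a maximal torus of~$\Ufam_p$, and as~$\Gu_p$ consists of nilpotent elements it meets~$\Gh$ trivially, so~$\Ufam_p\cap\Gh$ is exactly the toral part~$\mathfrak{t}_p$ and varies algebraically in~$p$. The decisive observation is that~$\mathfrak{t}_p$, being an algebraic toral subalgebra contained in the fixed~$\Gh$, is the complexification of a rational subspace of~$\Gh$ by the lemmas preceding Proposition~\ref{pr:ranksc}; such rational subspaces form a countable set, so the morphism~$p\mapsto\mathfrak{t}_p$ sends each irreducible component of the slice into a countable subset of a Grassmannian and is therefore constant. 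Hence along each component the torus~$\exp(\mathfrak{t}_p)$ is one fixed algebraic torus, and the family of tori is trivial, in particular algebraic.

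It remains to recombine the two families. Along a component of the slice the fixed torus normalises each unipotent radical~$\exp(\Gu_p)$ and meets it trivially, so Corollary~\ref{co:fconstr3} shows that the product family~$\{(g,p):g\in\Ufam_p\}$ is algebraic over a dense open subset; Noetherian induction over the finitely many components and their lower-dimensional complements, using Corollary~\ref{co:fconstr2}~(1), then yields that~$\UFam(\overline{G\cdot\FMod}\cap T)$ is algebraic. Tracing the reductions backwards---first Corollary~\ref{co:fconstr2}~(3), then Corollary~\ref{co:fconstr2}~(2)---gives that~$\UFam(\bar\FMod)$ is algebraic, and restricting to the locally closed~$\FMod$ proves the theorem. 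I expect the torus step to be the genuine obstacle: everything else is either linear algebra varying in families or the polynomial nature of~$\exp$ on unipotents, whereas the torus part must convert the analytic, transcendental exponential into algebraic data, which is precisely what the rationality of algebraic toral subalgebras, combined with the irreducibility of the parameter space, accomplishes.
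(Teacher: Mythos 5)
Your proposal is correct and follows essentially the same route as the paper's proof: saturate by the $G$-action and slice via~\ref{co:fconstr2}, use rank semi-continuity~(\ref{pr:ranksc}) together with the rigidity (countability) of algebraic subtori of a fixed torus to make the toral parts of the fibres a single constant torus, integrate the family of unipotent radicals through the algebraic restriction of the exponential, and recombine the two families with~\ref{co:fconstr3}. The only deviations are cosmetic: the paper first conjugates all fibres into a fixed Borel subalgebra~$\Gb$ and extracts the unipotent-radical family as~$\Ufam(\FMod)\cap(\Gu\times\FMod)$ rather than as the Killing-form kernel of constant rank, and it leaves implicit the dense-open and componentwise bookkeeping that you spell out by Noetherian induction.
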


\begin{proof}
We may assume that~$G\cdot \FMod = \FMod$~(\ref{co:fconstr2}
point~$2$). Let~$\Gb$ be a~Borel subalgebra of~$\Gg$. The set~$T$ of
parameters~$p\in\FMod$ such that~$\Ufam_p$ is contained in~$\Gb$ is
constructible and meets all orbit of~$\FMod$ under the operation
of~$G$. By~\ref{co:fconstr2} point~3, we only need to prove the proposition in
the case where~$\FMod = T$. The Lie subalgebra~$\Gb$ of~$G$ is algebraic,
let~$B$ be an algebraic subgroup of~$G$ such that~$\Lie(B) = \Gb$, note
that~$B\cdot \FMod = \FMod$.

Let~$\Gs$ be a maximal torus of~$\Gb$,~$\Gu$ its unipotent radical, and
let~$m$ be the maximal rank of the solvable algebras~$\Ufam_p$
for~$p\in\FMod$. Since the rank is semi-continuous~(\ref{pr:ranksc}) we may
use~\ref{co:fconstr2} a last time to restrict ourselves to the case where
each~$p$ in~$\FMod$ corresponds to a fibre~$\Ufam_p$ meeting~$\Gs$ along a
$m$-dimensional subspace.

Since~$\Gs \cap \Ufam_p$ is an algebraic subtorus of~$\Gs$, it can not move
and there is a subtorus~$\Gs'$ of~$\Gs$ such that~$\Gs \cap \Ufam_p = \Gs'$
for all~$p\in \FMod$ and~$S'$ a connected subgroup of~$G$ whose Lie algebra
is~$\Gs'$.

Now we denote by~$\Ufam^u(\FMod)$ the family of unipotent radical of the
fibres of~$\Ufam(\FMod)$, it is defined by
\[
\Ufam^u(\FMod) = \Ufam(\FMod) \cap (\Gu \times \FMod)
.
\]
The exponential map restricts to an algebraic morphism on~$\Gu$, thus
\[
\UFam^u(\FMod) (\exp\times\id_\FMod(\Ufam^u(\FMod)))
\]
is an algebraic family of subgroups of~$G$. We conclude with~\ref{co:fconstr3}
that
\[
\UFam_p = \UFam^u_p   S'
\]
defines an algebraic family of subgroups of~$G$ parametrised
by~$\FMod$, and~$\Lie(\UFam(\FMod)) = \Ufam(\FMod)$.
\end{proof}


\subsubsection{Proof of~\ref{th:main1} ---  Integration of general families}

\begin{proposition}
\label{pr:digeneral}
Let~$\FMod$ be a quasi-projective
subvariety of~$\SLie$. If~$\FMod$ is contained in~$\SAlg$
then the set-theoretic family~$\UFam(\FMod)$ is an algebraic subvariety of~$G
\times \FMod$.
\end{proposition}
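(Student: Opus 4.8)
The plan is to reduce the general case to the solvable one settled by Theorem~\ref{th:diresolv}, by peeling a Levi subalgebra off each fibre. Throughout I would lean on Corollary~\ref{co:fconstr2}, which lets me replace~$\FMod$ by simpler parameter sets and, through its first point together with Proposition~\ref{pr:fconstr2}, reduce the algebraicity of~$\UFam(\FMod)$ to that of~$\UFam$ over a dense open subset; recall that a quasi-projective~$\FMod$ is open in~$\bar\FMod$, so algebraicity of~$\UFam(\bar\FMod)$ restricts to algebraicity of~$\UFam(\FMod)$. First I would split~$\FMod$ into its irreducible components and argue on each, so I may assume~$\FMod$ irreducible. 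Using Corollary~\ref{co:fconstr2} point~2 I replace~$\FMod$ by~$\overline{G\cdot\FMod}$ and assume~$G\cdot\FMod=\FMod$; this keeps~$\FMod$ irreducible (as~$G$ is connected) and quasi-projective, and the fibres~$\Ufam_p$ stay algebraic.

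Next I would make the semi-simple part of the fibres uniform. By Corollary~\ref{co:dimalcev} the map~$p\mapsto S(\Ufam_p)$ is lower semi-continuous, and since the isomorphism classes occurring have bounded dimension there are finitely many of them, so on a dense open $G$-stable subset this class is a constant~$S(\Gs_0)$, which I may assume holds on all of~$\FMod$. By Richardson's rigidity theorem the semi-simple subalgebras of a fixed isomorphism type fall into finitely many $G$-conjugacy classes; the locus of~$p$ whose Levi subalgebra lies in a prescribed class is $G$-stable and constructible, being the projection of an incidence variety, so one such locus is dense and I may fix a single semi-simple~$\Gs\subset\Gg$ of type~$S(\Gs_0)$ to which the Levi subalgebra of every fibre is $G$-conjugate. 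Setting~$T=\set{q\in\SLie\mid \Gs\subset\Ufam_q}$, a closed subvariety of~$\SLie$, the $G$-stability gives~$G\cdot(\FMod\cap T)=\FMod$, so the hypothesis of Corollary~\ref{co:fconstr2} point~3 is met and I reduce to the family over~$\FMod'=\FMod\cap T$, that is, to the case where the fixed~$\Gs$ is a Levi subalgebra of every~$\Ufam_p$.

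Over~$\FMod'$ each fibre decomposes as~$\Ufam_p=\Gs\oplus\mathfrak{r}_p$, where~$\mathfrak{r}_p$ is the radical of~$\Ufam_p$; being the radical of an algebraic Lie algebra it is again algebraic and solvable. On a dense open subset the radical has constant dimension and is cut out of~$\Ufam_p$ by Killing-form conditions depending algebraically on~$p$, so~$p\mapsto\mathfrak{r}_p$ is a morphism to~$\SLie$ landing in the solvable part of~$\SAlg$, and Theorem~\ref{th:diresolv} makes the family of radical subgroups~$\UFam^{\mathrm{rad}}_p$ (the connected subgroups with Lie algebras~$\mathfrak{r}_p$) algebraic. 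The Levi part is the trivial family~$S\times\FMod'$ with~$\Lie(S)=\Gs$, hence algebraic. Since~$\Gs$ normalises the ideal~$\mathfrak{r}_p$ and meets it only in~$0$, the subgroup~$S$ normalises~$\UFam^{\mathrm{rad}}_p$ and the intersection~$S\cap\UFam^{\mathrm{rad}}_p$ is finite, so Corollary~\ref{co:fconstr3} applies and shows that~$p\mapsto S\cdot\UFam^{\mathrm{rad}}_p=\UFam_p$ is algebraic over a dense open subset. Unwinding the reductions through Corollary~\ref{co:fconstr2} point~1 and Proposition~\ref{pr:fconstr2} then yields the algebraicity of~$\UFam(\FMod)$.

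I expect the semi-simple part to be the main obstacle: everything downstream rests on pinning one Levi subalgebra~$\Gs$ simultaneously inside all fibres, which relies on Richardson's rigidity both for the semi-continuity of the isomorphism type and for the finiteness of conjugacy classes, and on the delicate interplay between the $G$-action and the reductions of Corollary~\ref{co:fconstr2}. By contrast, once~$\Gs$ is fixed, the splitting into a trivial semi-simple factor and an algebraically varying solvable radical, recombined by Corollary~\ref{co:fconstr3}, should be comparatively routine.
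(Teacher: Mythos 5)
Your proof is correct and follows the same skeleton as the paper's: reduce by Corollary~\ref{co:fconstr2} and the semi-continuity of the Levi type~(\ref{co:dimalcev}) to the case where all fibres have a fixed semi-simple type, pin a single semi-simple subalgebra~$\Gs$ inside every fibre via point~3 of~\ref{co:fconstr2} (the paper's citation of~\ref{pr:fconstr}, ``point~3'' is evidently a slip for this), integrate the solvable complement by Theorem~\ref{th:diresolv}, and recombine by Corollary~\ref{co:fconstr3}. You diverge in two places, and both divergences tighten the argument. First, to put the Levi subalgebra of \emph{every} fibre into a single $G$-orbit, the paper only remarks that the Levi factors of one Lie algebra are conjugate under its unipotent automorphisms; that resolves the ambiguity within a single fibre but not conjugacy \emph{across} fibres, which is exactly what the density hypothesis of point~3 of~\ref{co:fconstr2} needs. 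Your appeal to Richardson's rigidity --- finiteness of the $G$-conjugacy classes of semi-simple subalgebras of a fixed isomorphism type, so that one class-locus is $G$-stable, constructible and dense --- supplies that cross-fibre step explicitly. Second, you split off the full solvable radical~$\mathfrak{r}_p$, whereas the paper takes the unipotent radical (the kernel of the ambient Killing pairing, per the lemmas of~\ref{ss:resolv}). Read literally, the paper's formula~$\UFam(\FMod)=(S\times\FMod)\FFam$ then drops the toral part of the radical: for a fibre isomorphic to~$\Ggl_2$ inside~$\Gsl_3$ the unipotent radical is zero and the product reconstructs only the $\Gsl_2$-part. Your version --- the radical is solvable, algebraic, and generically cut out by the algebraic condition~$\kappa(x,[\Ufam_p,\Ufam_p])=0$, so Theorem~\ref{th:diresolv} applies and~$\UFam_p=S\cdot R_p$ with~$S\cap R_p$ finite and~$S$ normalising~$R_p$ as~\ref{co:fconstr3} requires --- is the reading the proof actually needs, and it is consistent with the paper's own treatment of the solvable case, where the toral factor~$S'$ is reinstated separately. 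One caveat you share with the paper but at least flag: both \ref{co:fconstr3} and the constant-dimension argument for the radical only give algebraicity over a dense open subset of the parameter space, so the final unwinding through point~1 of~\ref{co:fconstr2} and Proposition~\ref{pr:fconstr2} tacitly carries a Noetherian induction on the closed complement.
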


\begin{proof}
Taking~\ref{co:fconstr2}, point~1 and~\ref{co:dimalcev} into account, we may
freely assume that all the semi-simple factors of the fibres~$\Ufam(\FMod)$
are isomorphic to a fixed semi-simple algebra~$\Gs$. Moreover all semi-simple
factors of a Lie algebra are equivalent under the operation of the unipotent
isomorphisms of this algebra. We may invoke~\ref{pr:fconstr}, point~3 to
restrict ourselves to the case where~$\Gs$ is a subalgebra of each fibre
of~$\Ufam(\FMod)$. The unipotent radical of a Lie algebra is the kernel of its
Killing form, hence the set-theoretic family~$\Ufam^r(\FMod)$ of Lie
subalgebras of~$\Gg$ whose fibre at~$p$ is the unipotent radical of~$\Ufam_p$
is algebraic. By~\ref{th:diresolv} there is an algebraic family~$\FFam$ of
connected subgroups of~$G$ parametrised by~$\FMod$ such that~$\Lie(\FFam) =
\Ufam^r(\FMod)$. Let~$S$ be the connected subgroup of~$G$ whose Lie algebra
is~$\Gs$, we conclude by~\ref{co:fconstr3} that
\[
\UFam(\FMod) = (S\times\FMod) \FFam
,
\]
hence it is algebraic.
\end{proof}


\section{Moduli space and universal family}
\label{se:mo}
\subsubsection{Bouquet of algebraic varieties}
\label{ss:mobouquet}

The moduli space we construct is not an algebraic variety, but is a ringed
space obtained by gluing together possibly infinitely many
algebraic varieties along closed subsets.

\begin{definition}
Let~$\mathcal{X}$ be a set of irreducible algebraic varieties. A \emph{gluing
pattern}~$(F,h)$ for~$\mathcal{X}$ assigns to each pair~$(X,Y)$ of
elements of~$\mathcal{X}$ a closed subset~$F_{YX}$ of~$X$ and an
isomorphism~$h_{YX}: F_{YX} \to F_{XY}$, tied to satisfy the following
properties:
\begin{enumerate}
\item
For all~$x\in\mathcal{X}$,
one has~$f_{XX} = X$ and~$h_{XX}$ is the identity map.
\item
For any triple~$(X,Y,Z)$ of elements of~$\mathcal{X}$, the
composition~$h_{ZY} h_{YX}$ agrees with~$h_{ZX}$ whenever it can be
defined.
\end{enumerate}
\end{definition}
A gluing pattern on a set of algebraic varieties defines an equivalence
relation on the disjoint sum of the elements of~$\mathcal{X}$. The quotient
space~$\Bouquet$ of this equivalence relation carries a natural structure 
of ringed space.

The Zariski topology on~$\Bouquet$ is the final topology on the
set~$\Bouquet$, with respect to the natural injections~$X\to\Bouquet$.
Since the various~$F_{YX}$ are closed, the maps~$X\to\Bouquet$ are closed
immersions, the set of irreducible components of~$\Bouquet$ is~$\mathcal{X}$,
and a subset~$U$ of~$\Bouquet$ is open if, and only if,
for all~$X\in\mathcal{X}$, the trace~$U\cap X$ is open in~$X$.

If~$U$ is such an open subset of~$\Bouquet$, the ring~$\Regular(U)$ of regular
functions on~$U$ is the subring of the ring
\[
\prod_{X\in\mathcal{X}} \Regular_X(U \cap X)
\]
spanned by the elements that define a function on the
quotient~$\Bouquet$.

\begin{definition}
A \emph{bouquet of algebraic varieties} is a ringed space~$\Bouquet$.
\end{definition}

The name is suggested by the analogy between the gluing pattern of the moduli
space for 1-dimensional connected subgroups of~$\SL_3$ described in~\ref{ss:exsl3k1}
and the one of a \emph{bouquet of circles} in Topology.

Defining a morphism between two bouquets is the same as
defining a set of morphisms between their irreducible components, that are
compatible with their gluing patterns. In the category of bouquets, one can
again glue together a family of bouquets, and obtain a new bouquet.

Note that if~$E$ is a vector space and~$k$ an integer, the tautological bundle
over the Grassmann variety of $k$-dimensional subspaces of~$E$ is a final
object in the category of flat families of $k$-dimensional subspaces of~$E$
over a bouquet.

\begin{remark}
Bouquets of algebraic varieties are called~\emph{infinite-dimensional
varieties} by~Shafarevich~\cite{IND1} or~\emph{ind-varieties}
by~Kumar~\cite{IND2}. However, these authors seem to have been interested by
the case where the variety obtained after gluing is irreducible and infinite
dimensional at its general point, while the varieties we are interested in are
finite dimensional at their general point,
and have infinitely many irreducible components. We thus felt that it would be
misleading to refer to the varieties we encountered under one of the
afore mentioned names.
\end{remark}


\subsubsection{Construction}

\begingroup
\newcommand\XUMod{\mathcal{X}_\UMod}
\newcommand\GUMod{(A,h)_\UMod}
\newcommand\BUMod{\XUMod/\GUMod}
\newcommand\XUFam{\mathcal{X}_\UFam}
\newcommand\GUFam{(A,h)_\UFam}
\newcommand\BUFam{\XUFam/\GUFam}

Let~$\XUMod$ be the set of all irreducible subvarieties of~$\SLie$
that are contained in~$\SAlg$ and~$\XUFam$ the set
\[
\XUFam = \set{
\UFam(\FMod) \mid \FMod \in \XUMod
}
\]
of irreducible subvarieties of~$G \times \SLie$~(\ref{th:main1}). We define
a gluing pattern~$\GUMod$ (resp.~$\GUFam$) for~$\XUMod$ (resp.~$\XUFam$) by
assigning to each pair~$(\FMod_1,\FMod_2)$ of elements of~$\XUMod$
(resp.~$\XUFam$) the closed subset~$\FMod_1 \cap \FMod_2$ of~$\FMod_1$ and the
morphism from~$\FMod_1 \cap \FMod_2$ to~$\FMod_2 \cap \FMod_1$ obtained by
restricting the identical transform of~$\SLie$ (resp.~$G\times\SLie$).

\begin{definition}
The bouquet of algebraic varieties~$\UMod = \BUMod$ is the~\emph{moduli space
for connected subgroups of~$G$} and~$\UFam = \BUFam$ is the~\emph{universal
family for connected subgroups of~$G$.}
\end{definition}

The universality of these objects lies in the following

\begin{theorem}
\label{th:main2}
Let~$\FMod$ be an algebraic variety, and denote by
\begin{enumerate}
\item
$F$ the set of flat families of $k$-dimensional connected subgroups of~$G$ parametrised
by~$\FMod$;
\item
$M$ the set of morphisms of ringed spaces from~$\FMod$ to~$\UMod$.
\end{enumerate}
Then the map~$F\to M$ sending a family~$\FFam$ to the morphism to~$\SLie$
associated with~$\Lie(\FFam)$ and the map from~$M \to F$
sending a morphism~$\psi$ to the family~$\psi^*\UFam$ define a natural
correspondance between~$F$ and~$M$.
\end{theorem}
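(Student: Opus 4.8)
The plan is to reduce the universal property to the one already available for families of linear subspaces --- namely that the tautological bundle over a Grassmann variety is final among flat families of $k$-planes over a bouquet --- and then to transport it across the Lie functor by means of the integration Theorem~\ref{th:main1}. The essential observation is that a morphism of ringed spaces $\FMod\to\UMod$ is the same datum as a morphism of varieties $\varphi:\FMod\to\SLie$ whose image is contained in~$\SAlg$: composing with the natural morphism $q:\UMod\to\SLie$ (which restricts on each irreducible component to its inclusion into $\SLie$) produces such a $\varphi$, and I will show that every such $\varphi$ lifts back, uniquely, into the bouquet. Granting this identification, the two maps of the statement become: differentiate a flat family and read off its classifying morphism to $\SLie$; and pull the universal family back along a morphism to $\UMod$.

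In the first direction, let $\FFam$ be a flat family of $k$-dimensional connected subgroups over $\FMod$. By Corollary~\ref{co:diff} the family $\Lie(\FFam)$ is the pullback of the tautological bundle $\Tautological\to\SGrs$ along a morphism $\varphi:\FMod\to\SGrs$ whose fibre at $p$ is $\Lie(\FFam_p)$. Since each $\FFam_p$ is an algebraic subgroup we have $\varphi(p)\in\SAlg$, so $\varphi$ factors through $\SAlg\subset\SLie$. To lift $\varphi$ to $\UMod$ I treat each irreducible component $\FMod_i$ of $\FMod$ separately: the image $\varphi(\FMod_i)$ is an irreducible constructible subset of $\SLie$ contained in $\SAlg$, so by Theorem~\ref{th:main1} its closure $D_i=\bar{\varphi(\FMod_i)}$ is again contained in $\SAlg$ and is therefore one of the irreducible subvarieties of $\SLie$ out of which $\UMod$ is glued. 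Hence $\varphi|_{\FMod_i}$ factors as $\FMod_i\to D_i\hookrightarrow\UMod$ through the closed immersion of a component, and these factorisations agree on the intersections $\FMod_i\cap\FMod_j$ because the gluing pattern of $\UMod$ is induced by the identity of $\SLie$; they glue to the desired morphism $\psi:\FMod\to\UMod$.

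Conversely, given $\psi:\FMod\to\UMod$, let $\varphi=q\circ\psi$ and define $\psi^*\UFam$ to be the family whose fibre at $p$ is $\UFam_{\varphi(p)}$, the connected subgroup of $G$ with Lie algebra the $k$-plane $\varphi(p)$. Over each component $\FMod_i$ this is the fibre product $\FMod_i\times_{D_i}\UFam(D_i)$, which is algebraic because $\UFam(D_i)$ is algebraic by Theorem~\ref{th:main1}; gluing yields an algebraic subvariety of $G\times\FMod$ with reduced connected $k$-dimensional fibres. Its family of Lie algebras $\Lie(\psi^*\UFam)$ has fibre $\varphi(p)$ at each $p$, so it equals $\varphi^*\Tautological$, which is flat; by Proposition~\ref{pr:diff} the family $\psi^*\UFam$ is itself flat, whence $\psi^*\UFam\in F$. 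The two maps are mutually inverse: differentiating $\psi^*\UFam$ returns the classifying morphism of $\varphi^*\Tautological$, which is $\varphi$ and hence $\psi$; and starting from a flat family $\FFam$, the fibre of $\psi^*\UFam$ at $p$ is the connected subgroup with Lie algebra $\Lie(\FFam_p)$, which is $\FFam_p$ itself since $\FFam_p$ is connected, so the two reduced subvarieties $\psi^*\UFam$ and $\FFam$ of $G\times\FMod$ coincide.

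For naturality, given a morphism $f:\FMod'\to\FMod$ one checks that differentiation and pullback both commute with base change, $\Lie(f^*\FFam)=f^*\Lie(\FFam)$ and $(\psi\circ f)^*\UFam=f^*(\psi^*\UFam)$, so the correspondence is natural. The step I expect to be the true obstacle --- the only one that is not formal once Theorem~\ref{th:main1} is available --- is the identification of $M$ with the morphisms $\FMod\to\SLie$ landing in $\SAlg$: because the bouquet topology on $\UMod$ is strictly finer than the topology induced from $\SLie$, it is a priori unclear that a morphism into $\SLie$ with image in $\SAlg$ is continuous and regular for the bouquet structure. This is precisely what Theorem~\ref{th:main1} repairs, by guaranteeing that the closure of the irreducible constructible image remains inside $\SAlg$ and is thus a genuine irreducible component of $\UMod$ through which the morphism factors; the delicate part is verifying that this factorisation is unique and compatible with the regular functions of the bouquet, so that $q$ behaves as a monomorphism of ringed spaces on the morphisms in question.
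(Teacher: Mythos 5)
Your argument is essentially the paper's own: the paper states Theorem~\ref{th:main2} without a separate proof, the intended justification being exactly the chain you assemble --- Corollary~\ref{co:diff} to classify~$\Lie(\FFam)$ by a morphism to~$\SLie$ landing in~$\SAlg$, Theorem~\ref{th:main1} to guarantee that the closures of the images of the irreducible components of~$\FMod$ stay inside~$\SAlg$ (hence occur among the pieces glued into~$\UMod$) and that the pullback~$\psi^*\UFam$ is algebraic, and Proposition~\ref{pr:diff} to transfer flatness through the exponential. Your explicit bouquet bookkeeping, including the compatibility of the component-wise factorisations with the gluing pattern and the point you flag about regular functions on the bouquet, is a faithful (and if anything more careful) rendering of what the paper leaves implicit.
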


\begin{remark}
One can introduce the category of flat families of connected subschemes of~$G$
parametrised by bouquets of algebraic varieties. It follows from
Theorem~\ref{th:main2} that~$\UFam\to\UMod$ is a final object in this category. 
\end{remark}

\endgroup


\section{Family of identity components}
\label{se:ic}
The set of connected components of an algebraic group is
naturally identified with the quotient group of the group by its identity
component. We show that the set-theoretic family~$\FFam^o$ of
the identity components of the fibres of~$\FFam$ is an algebraic
family, and the family of quotients~$\FFam/\FFam^o$ can be constructed
for the general parameter.

\subsubsection{Closedness}

\begin{proposition}
\label{pr:clcc}
Let~$G$ be an algebraic group, $\FMod$ a quasi-projective variety,
and~$\FFam$ a family of loose subgroups of~$G$ parametrised
by~$\FMod$. For each~$p\in\FMod$ there is a closed subset~$F_p$
of~$\FFam$ that contains all the identity components of the
fibres of~$\FFam$ and meets~$\FFam_p$ exactly along its identity
component~$\FFam_p^o$.
\end{proposition}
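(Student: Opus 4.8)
The plan is to build $F_p$ as the Zariski closure of a family of subgroups that coincides with the family $\FFam^o=\bigcup_{q\in\FMod}\FFam_q^o$ of identity components along the fibre over $p$, but is allowed to be strictly larger over the other parameters. This extra room is \emph{essential}: taking $F_p$ to be the closure of $\FFam^o$ itself would make the statement equivalent to the closedness of $\FFam^o$, which is exactly the conclusion one is after and is not yet available. Once $F_p$ is produced for every $p$, one recovers $\FFam^o=\bigcap_{p\in\FMod}F_p$, because each $F_p$ contains every $\FFam_q^o$ while $F_q\cap\FFam_q=\FFam_q^o$; being an arbitrary intersection of closed sets, this exhibits $\FFam^o$ as closed.

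First I would fix the exponential chart: neighbourhoods $W_0\ni 0$ in $\Gg$ and $W_1\ni e$ in $G$ that are biholomorphic under $\exp$, shrinking $W_1$ so that $\FFam_p\cap\overline{W_1}\subseteq\FFam_p^o$. This last condition is the one place where $p$ enters, and it is available because the finitely many non-identity components of $\FFam_p$ are closed and miss $e$. Using the family multiplication $\mu_\FMod$ I would then form, over each parameter $q$, the subgroup of $\FFam_q$ generated by $\FFam_q\cap W_1$, and call $K$ the resulting set-theoretic family (it depends on $p$ through $W_1$). Since an arbitrary neighbourhood of $e$ generates the connected component, the fibre of $K$ over every $q$ contains $\FFam_q^o$; over $q=p$ it equals $\FFam_p^o$ exactly, by the choice of $W_1$. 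Over other $q$ the generator $\FFam_q\cap W_1$ may well reach non-identity components, so the fibre of $K$ there may be strictly larger than $\FFam_q^o$, which is precisely the permitted slack.

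I would then set $F_p=\bar K$, the Zariski closure in $\FFam$. It is closed and contains $\FFam_q^o$ for all $q$, so the two easy clauses hold. That $\bar K$ is again a family of loose subgroups follows from the continuity argument of \ref{pr:fgroup}, the fibrewise group relation $\theta(K\times_\FMod K)\subseteq K$ passing to the closure; hence $F_p\cap\FFam_p$ is a subgroup of $G$ squeezed between $\FFam_p^o$ and $\FFam_p$, that is, a union of cosets of $\FFam_p^o$. To identify it with $\FFam_p^o$ I would use that, through the chart above, $\bar K$ is described near the identity section by the exponential of the family of Lie algebras $\Lie(\FFam)=T\FFam\cap(\Gg\times\FMod)$, whose fibres are the $\Lie(\FFam_q^o)$; on each locally closed stratum of $\FMod$ where $\dim\FFam_q^o$ is constant this family maps to a single Grassmannian (\ref{co:diff}), and \ref{th:main1} together with \ref{co:main1} shows the corresponding integrated connected family is algebraic.

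The crux, where essentially all the difficulty lies, is to show that passing to the closure does not enlarge the fibre over $p$, so that $F_p\cap\FFam_p=\FFam_p^o$ rather than a proper finite extension. A point of $F_p\cap\FFam_p$ outside $\FFam_p^o$ would be a limit of points of $\bar K$ over parameters $q\to p$ lying in a non-identity component of $\FFam_p$; since fibre dimension is upper semicontinuous, $\FFam_p^o$ is the largest identity component near $p$, so the danger is exactly that the lower-dimensional components $\FFam_q^o$ accumulate onto such a non-identity component. I would exclude this by transporting the limit into the exponential chart, as in \ref{pr:fconstr2}: inside $W_1$ the set $\bar K$ is the exponential of the closure of $\Lie(\FFam)$, which on a stratum of constant $\dim\FFam_q^o$ is the pull-back of the tautological bundle and hence a locally trivial family of linear subspaces, so its fibre over $p$ is forced to be $\Lie(\FFam_p^o)$; and since the accumulating connected groups all pass through $(e,p)$, their limit is connected through $e$ and must lie in the component $\FFam_p^o$. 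Making this accumulation argument rigorous across the non-algebraic exponential, keeping the connectedness and the fibre-dimension strata under control and ruling out that an identity component runs off to infinity as $q\to p$, for which the quasi-projectivity of $\FMod$ is used, is the one substantial step of the proof.
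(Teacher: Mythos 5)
There is a genuine gap, and it sits exactly where you place it yourself. Your set-theoretic family $K$ satisfies $K_q\supseteq\FFam_q^o$ for \emph{every} $q$, so every dangerous limit point --- a euclidean limit of points $g_i\in\FFam_{q_i}^o$ with $q_i\to p$ landing in a non-identity component of $\FFam_p$ --- lies in $\bar K$ no matter how much slack you allow over $q\neq p$. Hence proving $\bar K\cap\FFam_p=\FFam_p^o$ is not easier than proving directly that $\FFam^o$ is closed at $p$: the slack buys nothing for the hard inclusion, only for the trivial one. Your tools cannot close this: the exponential chart and the local triviality of $\Lie(\FFam)$ on dimension strata describe $\bar K$ only inside $W_1\times\FMod$, i.e.\ near the identity section, where the conclusion was never in doubt, while the points to be excluded lie in non-identity components of $\FFam_p$, outside every such chart. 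The assertion that ``the accumulating connected groups all pass through $(e,p)$, so their limit is connected through $e$'' is false as stated: the groups $\FFam_q^o$ are non-compact, and a net of points of $\FFam_{q_i}^o$ can converge to a point far from $e$ without any connectedness surviving in the limit --- this escape to infinity is precisely the phenomenon to be ruled out, and your last paragraph names it and then defers it (``making this accumulation argument rigorous \dots\ is the one substantial step''). So the crux is restated, not proven. There are also secondary defects: $W_1$ is an analytic neighbourhood, so $K$ is not a constructible subset of $G\times\FMod$ and~\ref{pr:fgroup}, stated for a subvariety that is the closure of its restriction to an open set, does not apply to $\bar K$ as claimed; and~\ref{co:diff} presupposes flatness of the family, which is not among the hypotheses here.

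The paper resolves the crux by controlling behaviour at infinity rather than near $e$, which no chart at the identity can do. Via~\ref{pr:compactification} (Chevalley's structure theorem together with the associated-bundle construction $G\times_L\P(E)$) it chooses a projective completion $K$ of $G$ in which the closures of the distinct connected components of the \emph{particular} fibre $\FFam_p$ remain disjoint; it then takes a projective completion $Q$ of $\FMod$ --- this is where quasi-projectivity is actually used, a point your sketch mentions but never exploits --- and forms the Stein factorisation $X\labelledto{f}Q'\to Q$ of the proper projection of the closure $X$ of $\FFam$ in $K\times Q$. Since the fibres of $f$ are connected, the set $F_p=\FFam\cap f^{-1}(f(\{e\}\times Q))$ automatically contains every identity component $\FFam_q^o$, and by the choice of the completion of $G$ it meets $\FFam_p$ only along $\FFam_p^o$: the separation of components at infinity is built in before taking closures, instead of being fought for afterwards. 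Note the paper needs neither the exponential map nor Theorem~\ref{th:main1} for this statement. If you want to salvage your route, the missing ingredient is exactly such a compactification statement; without it, the closure step is unjustified.
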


\begin{proof}
Let~$K_p$ be any projective
completion of~$G$ such that the connected components of the
closure of~$\FFam_p$ in~$K$ are the closures of the connected components
of~$\FFam_p$~(see~\ref{pr:compactification}), and let~$Q$ be any projective
completion of~$\FMod$. We consider the inclusion of~$G \times \FMod$ in~$K
\times Q$ and the Stein
factorisation of the projection to~$Q$ of the closure~$X$ of the image
of~$\FFam$ in~$K\times Q$:
\[
X \labelledto{f} Q' \to Q
.
\]
Now let~$F_p = \FFam \cap \inv f(f(\set{\theta_p(e)}\times Q))$, this is
a closed subset of~$\FFam$ that contains the identity component of
each fibre of~$\FFam$ and meets~$\FFam_p$ precisely along~$\FFam_p^o$.
\end{proof}

\begin{corollary}
\label{co:clcc}
Let~$G$ be an algebraic group, $\FMod$ a variety,
and~$\FFam$ a family of (resp.~loose) subgroups of~$G$ parametrised
by~$\FMod$. The set-theoretic family
\[
\FFam^o = \set{ (g,p) \in \FFam \mid g \in \FFam_p^o }
\]
of the identity components of the fibres of~$\FFam$ is closed
in~$\FFam$ and defines an algebraic family of (resp.~loose) subgroups of~$G$
parametrised by~$\FMod$.
\end{corollary}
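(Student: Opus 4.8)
The plan is to deduce Corollary~\ref{co:clcc} from Proposition~\ref{pr:clcc}, which has just been proved. Proposition~\ref{pr:clcc} supplies, for each fixed parameter~$p\in\FMod$, a closed subset~$F_p$ of~$\FFam$ that contains \emph{all} identity components~$\FFam_q^o$ (as~$q$ ranges over~$\FMod$) and meets the particular fibre~$\FFam_p$ in exactly~$\FFam_p^o$. My goal is to reassemble these pieces into a single closed set, namely
\[
\FFam^o = \set{ (g,p) \in \FFam \mid g \in \FFam_p^o }
.
\]

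**First** I would establish the containment~$\FFam^o \subset F_p$ for every~$p$, which is immediate from the first defining property of~$F_p$: each~$F_p$ contains every identity component~$\FFam_q^o$, hence it contains the whole of~$\FFam^o$. **Next** I would argue the reverse relationship fibrewise. Fix~$p$. The intersection~$F_p \cap \FFam_p$ equals~$\FFam_p^o$ by the second property of~$F_p$, so over the single fibre above~$p$ the closed set~$F_p$ cuts out exactly the identity component. The key observation is then that
\[
\FFam^o = \bigcap_{p\in\FMod} F_p
,
\]
because a point~$(g,q)$ lies in the left side iff~$g\in\FFam_q^o$, and the test~$g\in F_q \cap \FFam_q = \FFam_q^o$ detects precisely this. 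An arbitrary intersection of closed sets is closed, so~$\FFam^o$ is closed in~$\FFam$; being a closed subvariety, it is an algebraic family in the sense of our first definition, and its projection to~$\FMod$ is surjective since every fibre~$\FFam_q^o$ is non-empty (it contains the identity section).

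**The main obstacle** will be to confirm that the fibres of~$\FFam^o$ are again supported by algebraic subgroups of~$G$, as required for a family of (loose) subgroups, and to match the two parenthetical cases. The identity component~$\FFam_q^o$ of an algebraic group~$\FFam_q$ is itself an algebraic subgroup, so the fibre condition holds in both the loose and the reduced case; the reducedness of~$\FFam^o$ at~$q$ in the non-loose case follows from that of~$\FFam$ together with the fact that~$\FFam_q^o$ is a union of irreducible components of~$\FFam_q$ and so inherits its scheme structure. **Finally** I would note that~$\bar\FMod$ being quasi-projective is what licenses the appeal to Proposition~\ref{pr:clcc} (which carries that hypothesis), so for a general variety~$\FMod$ one covers it by quasi-projective opens and glues the resulting closed sets, which agree on overlaps because the construction of~$\FFam^o$ is purely fibrewise and hence local on~$\FMod$.
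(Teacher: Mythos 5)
Your proposal is correct and takes essentially the same route as the paper: localize on~$\FMod$ (the paper reduces to the affine case, you to quasi-projective opens) so that Proposition~\ref{pr:clcc} applies, obtain closedness from the identity~$\FFam^o = \bigcap_{p\in\FMod} F_p$, and then check the fibre conditions. The only cosmetic differences are that the paper deduces reducedness of~$\FFam^o_p$ from its being \emph{open} in~$\FFam_p$ while you use the equally valid fact that it is a union of irreducible components, and that you write~$\bar\FMod$ where you clearly mean~$\FMod$.
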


\begin{proof}
Being closed is a local property, hence we may assume that~$\FMod$ is
an affine variety. By~\ref{pr:clcc} we have
\[
\FFam^o = \bigcap_{p \in \FMod} F_p
\]
which shows that~$\FFam^o$ is a closed subset of~$G \times \FMod$.
Since each fibre~$\FFam^o_p$ of~$\FFam^o$ is open in~$\FFam_p$, the former is
reduced when the latter is.
\end{proof}

\subsubsection{Quotient}

\begin{proposition}
\label{pr:fcompact}
Let~$G$ be an algebraic group and~$\FMod$ be an affine variety. For all
family~$\FFam$ of loose subgroups of~$G$ parametrised by~$\FMod$,
there exists a projective variety~$X$, an embedding of~$G\times\FMod$ in~$X
\times\FMod$ and an open subset~$U$ of~$\FMod$ such that for
all~$p\in U$ the connected components of the closure of~$\FFam_p$
in~$K$ are the closures of the connected components of~$\FFam_p$.
\end{proposition}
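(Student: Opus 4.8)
The plan is to reduce to the case where the number of connected components of the fibres is constant, to build a projective completion~$X$ of~$G$ that separates the components of a single well-chosen fibre, and then to propagate this separation to a whole neighbourhood by a semi-continuity argument read off from a Stein factorisation.

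First I would reduce the hypotheses. Treating one irreducible component of~$\FMod$ at a time, I may assume~$\FMod$ irreducible. By Corollary~\ref{co:clcc} the family~$\FFam^o$ of identity components is an algebraic family, so the fibrewise index~$p\mapsto \#\pi_0(\FFam_p) = [\FFam_p:\FFam_p^o]$ is a constructible function on~$\FMod$; shrinking~$\FMod$ to a dense open set I may assume it takes a constant value~$r$. Over this open set each fibre~$\FFam_p$ is the disjoint union of~$r$ cosets~$C_1(p),\dots,C_r(p)$ of its identity component, all of the same dimension, and these cosets are exactly the connected components of~$\FFam_p$ and are pairwise disjoint closed subsets of~$G$.

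Next I would construct the completion. Fix a generic parameter~$p_0$. The components~$C_1(p_0),\dots,C_r(p_0)$ are finitely many pairwise disjoint closed subvarieties of the quasi-projective variety~$G$, so in any initial projective completion~$X_0$ their pairwise intersections are confined to the boundary~$X_0\setminus G$; blowing up this boundary intersection locus (an operation which is an isomorphism over~$G$, hence yields again a completion of~$G$) I obtain a projective completion~$X$ of~$G$ in which the closures~$\overline{C_i(p_0)}$ are pairwise disjoint. Let~$\overline{\FFam}$ be the closure of~$\FFam$ in~$X\times\FMod$. Since~$X$ is complete the projection~$\overline{\FFam}\to\FMod$ is proper, its fibre over~$p$ is the closure~$\overline{\FFam_p}$ of~$\FFam_p$ in~$X$ over a dense open set of parameters, and its Stein factorisation
\[
\overline{\FFam}\xrightarrow{\;f\;}\FMod'\longrightarrow\FMod
\]
has~$f$ with connected fibres and~$\FMod'\to\FMod$ finite. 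The connected components of~$\overline{\FFam_p}$ are then in bijection with the points of the fibre of~$\FMod'\to\FMod$ over~$p$, so~$p\mapsto\#\pi_0(\overline{\FFam_p})$ is the fibre-cardinality function of a finite morphism and is therefore lower semi-continuous. Because taking closures can only merge components, $\overline{\FFam_p}=\bigcup_i\overline{C_i(p)}$ shows~$\#\pi_0(\overline{\FFam_p})\le r$ everywhere, with equality at~$p_0$ by the choice of~$X$; hence equality holds on a dense open subset~$U\ni p_0$. For~$p\in U$ no two of the closed cosets~$\overline{C_i(p)}$ meet, so the connected components of~$\overline{\FFam_p}$ are precisely the closures of the connected components of~$\FFam_p$, which is the assertion; the required embedding~$G\times\FMod\hookrightarrow X\times\FMod$ is induced by~$G\hookrightarrow X$.

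The main obstacle is the construction of~$X$ at the single parameter~$p_0$ together with the verification that separation, once achieved there, survives on a neighbourhood. The first is a separation-by-blowing-up statement for finitely many disjoint closed subvarieties of a quasi-projective variety, and the second rests on the lower semi-continuity of the number of connected components of the fibres of the proper morphism~$\overline{\FFam}\to\FMod$, which one reads off from its Stein factorisation. Everything else is a routine reduction using Corollary~\ref{co:clcc} and the constancy of a constructible function on a dense open set.
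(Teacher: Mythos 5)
There is a genuine gap in your propagation step. You separate the closures of the components only at the single parameter~$p_0$, and then invoke ``lower semi-continuity of the fibre-cardinality of a finite morphism'' to spread the separation to a neighbourhood. That statement is false for general finite morphisms: for the normalisation of a nodal curve, the fibre over the node has two points while every nearby fibre has one, so the locus where the fibre has at least two points is closed, not open. It becomes true only under extra hypotheses (normal base in characteristic zero, every irreducible component of the source dominating the base), none of which you arrange. Worse, the argument is circular: the completion~$X$ is built from~$p_0$, hence so are~$\overline{\FFam}$, the Stein factorisation~$\FMod'\to\FMod$, and the dense open locus where the fibre of the closure coincides with the closure of the fibre. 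You need~$p_0$ to be generic with respect to data that exist only after~$p_0$ has been chosen. Concretely, if for each~$p\neq p_0$ the closures of two cosets meet at a boundary point~$b(p)$ moving with~$p$, then your blow-up centre~$\overline{C_1(p_0)}\cap\overline{C_2(p_0)}$ misses~$b(p)$, the nearby fibres remain unseparated in the new completion, and~$\FMod'\to\FMod$ exhibits exactly the node pattern above: two points over~$p_0$, one point nearby, so equality at~$p_0$ does not spread. Moreover the fibre of~$\overline{\FFam}$ at~$p_0$ may contain extra boundary points, limits from nearby fibres, bridging the two closures your blow-up so carefully separated, in which case the component count at~$p_0$ is already~$<r$ and nothing follows.

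The underlying structural problem is that your embedding is of product form~$j\times\id_\FMod$ with a fixed~$j:G\hookrightarrow X$, whereas the statement allows, and the paper's proof essentially exploits, an embedding of~$G\times\FMod$ into~$X\times\FMod$ that moves with the parameter. The paper reduces to the maximal affine subgroup~$L$ of~$G$ via the Chevalley structure theorem, constructs from the ideal of~$\FFam^o$ finitely many representations~$V_Q$ and morphisms~$d_Q:\FMod\to V_Q$ such that the stabiliser of~$[d_Q(p)]\in\P(V_Q)$ in~$L$ is~$L\cap\FFam^o_p$ for~$p$ in a stratum~$Q$, and embeds by~$\theta(g,p)=(g,\,g\,d(p),\,p)$; two distinct components~$h_1\FFam^o_p$ and~$h_2\FFam^o_p$ then land in disjoint subvarieties~$K_0\times\{[h_i\FFam^o_p\,d_Q(p)]\}\times\{p\}$, which are closed because the orbits~$h_i\FFam^o_p\,d_Q(p)$ are isomorphic to the complete abelian variety~$\FFam^o_p/(\FFam^o_p\cap L)$. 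The separation thus holds uniformly for all~$p$ at once and no semi-continuity is needed. For what it is worth, your single-fibre blow-up lemma is sound (blowing up the ideal-sum intersection does make strict transforms disjoint), and your reduction via Corollary~\ref{co:clcc} is fine; but without the moving vector~$d(p)$, or a correctly hypothesised semi-continuity statement proved over a normal base with dominating components and with~$p_0$ chosen inside the relevant open loci, the separation achieved at one point cannot be propagated, and this is precisely the heart of the proposition.
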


In other words, there exists a compactification of~$\FFam$ such that the
various connected components of a general fibre in~$\FFam$ have no common
points at infinity.

\begingroup
\newcommand\CQ{\mathcal{Q}}
\begin{lemma}
Let~$L$ be an affine group and~$\FMod$ an affine variety. For all
family~$\FFam$ of loose subgroups of~$L$ parametrised by~$\FMod$,
there is
\begin{enumerate}
\item
a finite partition~$\CQ$ of~$P$;
\item
representations~$(V_Q)_{Q\in\CQ}$ of~$G$;
\item
morphisms~$d_Q: \FMod\to V_Q$;
\end{enumerate}
such that for all~$Q\in\CQ$ and all~$p\in Q$ we have~$d_Q(p)\neq0$ and the
stabiliser of~$[d_Q(p)]\in\P(V_Q)$ in~$G$ is~$\FFam^o_p$.
\end{lemma}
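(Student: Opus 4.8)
The plan is to reduce the statement to a family version of Chevalley's theorem representing a closed subgroup as the stabiliser of a line. Since only the identity components~$\FFam^o_p$ matter, I would first replace~$\FFam$ by the family~$\FFam^o$ of identity components, which is again an algebraic family of loose subgroups by Corollary~\ref{co:clcc}; each fibre~$\FFam^o_p$ is then a connected closed subgroup of the affine group~$L$ (writing~$G=L$). It thus suffices to produce, on the pieces of a finite partition of~$\FMod$, morphisms~$d_Q$ whose value at~$p$ spans a line in a representation of~$L$ with stabiliser exactly~$\FFam^o_p$.

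Next I would carry out Chevalley's construction uniformly over the whole family. Let~$\C[L]$ carry the right-translation action~$\rho$ of~$L$, which is a locally finite rational representation. The family~$\FFam^o \subseteq L\times\FMod$ is closed, so its ideal~$J \subseteq \C[L]\otimes\C[\FMod]$ is finitely generated; writing a finite generating set as combinations~$\sum f\otimes r$ with~$f\in\C[L]$, the functions~$f$ span a finite-dimensional subspace, and I would enlarge it to the smallest~$\rho$-stable finite-dimensional subspace~$W\subseteq\C[L]$ containing them. For a single closed subgroup~$H\subseteq L$ one has~$\rho_g(I(H)) = I(Hg)$, so~$\rho_g$ fixes~$I(H)$ if and only if~$g\in H$; hence, with~$d=\dim(I(H)\cap W)$, the line~$\wedge^{d}(I(H)\cap W)\subseteq\wedge^{d}W$ has stabiliser exactly~$H$. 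Applying this fibrewise to~$H=\FFam^o_p$, I want the subspaces~$M_p := I(\FFam^o_p)\cap W$ to organise into an algebraic family.

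Then I would stratify. The function~$p\mapsto\dim M_p$ is upper semicontinuous and bounded by~$\dim W$, so~$\FMod$ decomposes into finitely many locally closed subvarieties on which it is constant; this is the partition~$\CQ$, with value~$d_Q$ on~$Q$. Over each~$Q$ the~$\C[\FMod]$-module~$\mathcal{M}=J\cap(W\otimes\C[\FMod])$ has constant fibre rank~$d_Q$ (after refining the partition so that its fibre at~$p$ is precisely~$M_p$), hence is locally free there and defines a morphism~$Q\to\Grass(d_Q,W)$; composing with the Plücker embedding into~$\P(\wedge^{d_Q}W)$ and clearing denominators over the affine variety~$\FMod$ yields a morphism~$d_Q:\FMod\to V_Q:=\wedge^{d_Q}W$ whose value at each~$p\in Q$ is a nonzero vector spanning~$\wedge^{d_Q}M_p$. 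By the fibrewise computation above,~$\mathrm{Stab}_L([d_Q(p)]) = \mathrm{Stab}_L(M_p)=\FFam^o_p$ for all~$p\in Q$, which is the required conclusion.

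The hard part will be the uniformity of the construction across the family. A single~$W$ must simultaneously generate every fibre ideal~$I(\FFam^o_p)$---so that the stabiliser of~$M_p$ is exactly~$\FFam^o_p$ and not a larger subgroup---while the fibrewise intersections~$M_p=I(\FFam^o_p)\cap W$ must vary algebraically. The latter cannot hold on all of~$\FMod$, since the intersection dimension jumps, and this is precisely what forces the passage to a finite partition. The delicate point is to refine~$\CQ$ so that on each stratum the fibre of the coherent module~$\mathcal{M}$ coincides with~$M_p$ and still generates the full reduced fibre ideal: this is a non-flatness phenomenon comparing the scheme-theoretic fibre of~$\FFam^o$ with the reduced group~$\FFam^o_p$, and controlling that discrepancy is where the real work lies. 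Once it is arranged, the Plücker formula for~$d_Q$ and the stabiliser identity are routine.
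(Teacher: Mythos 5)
Your skeleton is the same as the paper's: reduce to the family~$\FFam^o$ of identity components via Corollary~\ref{co:clcc}, run Chevalley's stabiliser-of-a-line construction uniformly from a finite-dimensional space of generators of the family's ideal, stratify by a dimension count, and take a top exterior power. But your execution diverges at the central step, and there you have a genuine gap --- one you flag yourself (``controlling that discrepancy is where the real work lies'') without closing. You attach to each parameter the subspace~$M_p = I(\FFam^o_p)\cap W$ cut out by the \emph{reduced} fibre ideal, and you need the module~$\mathcal{M} = J\cap(W\otimes\C[\FMod])$ to have fibre~$M_p$ and to be locally free on the strata. The reduced ideals of the fibres of a family do not assemble into a coherent module: at a non-reduced fibre of the loose family~$\FFam^o$ the fibre of~$\mathcal{M}$ is essentially the partial evaluation~$\epsilon(p)\bigl(J\cap(W\otimes\C[\FMod])\bigr)$, which can be strictly smaller than~$I(\FFam^o_p)\cap W$, and the function~$p\mapsto\dim\bigl(I(\FFam^o_p)\cap W\bigr)$ is not even obviously constructible. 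So the stratification, the identification of fibres, and the local freeness you invoke are precisely what remains to be proved; the proposal stops where the proof has to begin.

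The paper's proof avoids the discrepancy instead of controlling it: it never intersects with reduced fibre ideals. It fixes a finite-dimensional $L$-stable subspace~$E\subseteq\Reg_L\otimes\Reg_\FMod$ containing generators of the ideal~$I$ of~$\FFam^o$, sets~$W = E\cap I$, and works with the partial evaluations~$W_p = \epsilon(p)(W)$: these are \emph{images} of one fixed space under maps depending polynomially on~$p$, so their algebraic variation comes for free. Fixing a spanning family~$(w_1,\dots,w_n)$ of~$W$, the pieces of the partition are the mere \emph{subsets} of~$\FMod$ on which a given index set yields a basis~$(\epsilon(p)(w_i))_{i\in\gamma}$ of~$W_p$; the map~$d_Q(p) = \epsilon(p)(w_{i_1})\wedge\cdots\wedge\epsilon(p)(w_{i_{\delta}})$ is then a morphism defined polynomially on \emph{all} of~$\FMod$ and nonvanishing on the piece. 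Observe that the lemma only requires the partition to be finite, not that its pieces be subvarieties: this dispenses at one stroke with the Grassmannian, the local freeness, the Pl\"ucker embedding, and your ``clearing denominators'' step --- which is itself a secondary gap, since a morphism from a locally closed~$Q$ to~$\P(\bigwedge^{d}W)$ lifts to a nowhere-zero map to~$\bigwedge^{d}W$ only after trivialising a line bundle, and regular maps on a locally closed subvariety do not extend to~$\FMod$ without further refinement and argument. Your fibrewise stabiliser identity, by contrast, is the same one the paper takes from Borel and is sound; with~$M_p$ replaced by the evaluation spaces~$\epsilon(p)(W)$, the rest of your argument collapses onto the paper's.
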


\begin{proof}
Let~$E$ be a finite dimensional $L$-stable linear subspace of~$\Reg_L \otimes
\Reg_\FMod$ containing generators of the ideal~$I$ of the family~$\FFam^o$ of
identity components of~$\FFam$, and let~$W = E \cap I$. For each~$p$
in~$\FMod$, we denote by~$\epsilon(p)$ the partial evaluation morphism
from~$\Reg_L \otimes
\Reg_\FMod$ to~$\Reg_L$ sending a function~$f$ to the
function~$\epsilon(p)(f)$ defined
by~$\epsilon(p)(f)(g) = f(g,p)$. The linear subspace~$E =
\epsilon(\FMod)(E)$ of~$\Reg_L$ spanned by partial evaluations is finite
dimensional, and contains all the spaces
\[
W_p = \epsilon(p)(W),\qquad p\in \FMod
.
\]
Let~$\delta:\FMod\to\N$ be the function defined by~$p\mapsto\dim W_p$ and
put~$m = \max\set{\delta(\FMod)}$. For each~$p$ in~$\FMod$ 
there is a subsystem of~$(\epsilon(p)(w_1), \dots, \epsilon(p)(w_m))$
that is a basis of~$W_p$. Let~$\gamma$ be a function choosing for
each~$p\in\FMod$ indices in~$\set{1,\dots,m}$ of such a subsystem.
Let~$\CQ$ be the partition of~$\FMod$ defined by the
function~$\delta\times\gamma$.
For each~$Q\in\CQ$ we set
\[
V_Q
=
\Alt^{\delta(Q)} E
\]
and, with~$\gamma(Q) = \set{ i_1 <\cdots< i_{\delta(Q)} }$,
\[
d_Q (p) =
\epsilon(p)(w_{i_1})\wedge \cdots \wedge \epsilon(p)(w_{i_{\delta(Q)}})
.
\]
When~$p\in\FMod$ belongs to~$Q\in\CQ$, the map~$d_Q$ does not vanish
at~$p$. The ideal of~$\Reg_G$ spanned by
\[
\set{\epsilon(p)(w_{i_1}),\dots, \epsilon(p)(w_{i_{\delta(Q)}})}
\]
is the ideal of~$\FFam^o_p$, hence the stabiliser of~$[d_Q(p)]\in\P(V_Q)$
in~$G$ is~$\FFam^o_p$ (see~\cite[3.8 and~5.1]{BOREL}).
\end{proof}

\begin{proof}[Proof of~\ref{pr:fcompact}]
According to the Chevalley structure Theorem~\cite{CONRAD}, $G$ is an extension of
an abelian variety~$A$ by an affine group~$L$. We apply the lemma to~$L$ and
put
\[
V = \bigoplus_{Q\in\CQ} V_Q
\quad\text{and}\quad
d = \bigoplus_{Q\in\CQ} d_Q
.
\]
We denote by~$\pr_Q$ the projection from~$\P(V)\times\P(V_Q)$ to~$\P(V_Q)$ of
the closure of the graph of the projection from~$\P(V)$ to~$\P(V_Q)$. The
latter is a rational $G$-invariant map that is defined at~$[d(p)]$ for
each~$p\in Q$.

Starting with~$K_0$ any projective
compactification of~$G$ we let~$X = K_0\times\P(V)$ and define the embedding
\[
\fun\theta\: G\times \FMod\to X \times \FMod\\
(g,p)\mapsto(g, g d(p), p)
.\\
\]
Now choose~$p\in\FMod$, $Q\in\CQ$ containing~$p$ and consider two different connected
components~$h_1\FFam_p^o$ and~$h_2 \FFam_p^o$, where~$\set{h_1, h_2}\subset
L$, of~$\FFam_p$. The stabilizer of~$[d_Q(p)]$ in~$L$
is~$L\cap\FFam_p^o$~(\ref{co:appcomp2}), hence the two
subvarieties
\[
h_1\FFam_p^o d_Q(p)
\quad\text{and}\quad
h_2\FFam_p^o d_Q(p)
\]
of~$\P(V_Q)$ are disjoint. Since they are isomorphic to the abelian
variety~$\FFam_p^o/(\FFam_p^o\cap L)$, they are furthermore closed
in~$\P(V_Q)$.  We conclude that~$h_1\FFam_p^o$ and~$h_2\FFam_p^o$ are
respectively mapped by~$(\id_G\times \pr_Q \times \id_\FMod)\circ\theta$ into
the two disjoint closed subvarieties
\[
K_0 \times \set{[h_1 \FFam_p^o d_Q(p)]} \times\set{p}
\quad\text{and}\quad
K_0 \times \set{[h_2 \FFam_p^o d_Q(p)]} \times\set{p}
\]
of~$G\times\P(V_Q)\times\FMod$, so their closures in~$X \times\FMod$
remain disjoint.
\end{proof}
\endgroup

\begin{corollary}
\label{co:stein}
Let~$G$ be an algebraic group, $\FMod$ a variety,
and~$\FFam$ a family of loose subgroups of~$G$ parametrised
by~$\FMod$. The projection~$\FFam \to\FMod$ admits a Stein factorisation
\[
\FFam \labelledto{\pi_0} \pi_0(\FFam) \to \FMod
\]
of~$\FFam\to P$: the morphism~$\pi_0(\FFam)$ is an algebraic variety, the
morphism~$\FFam\to\pi_0(\FFam)$ has connected fibres and the
morphism~$\pi_0(\FFam)\to\FMod$ is proper and finite.
\end{corollary}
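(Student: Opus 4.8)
The plan is to reduce the statement to the classical Stein factorisation of a \emph{proper} morphism, by first compactifying the family so that its projection to~$\FMod$ becomes proper, while keeping the connected components of its fibres separated. The main obstacle is precisely that the projection~$\FFam\to\FMod$ is not proper---its fibres are algebraic groups, which need not be complete---so no Stein factorisation is available for it directly; everything hinges on the compactification furnished by Proposition~\ref{pr:fcompact}.

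First I would reduce to the case where~$\FMod$ is affine. The factorisation we seek is intrinsic---the fibres of~$\FFam\to\pi_0(\FFam)$ ought to be the connected components of the fibres of~$\FFam\to\FMod$---so the pieces built over the members of an affine cover of~$\FMod$ glue, and properness and finiteness of~$\pi_0(\FFam)\to\FMod$ are local on the target. With~$\FMod$ affine I would apply Proposition~\ref{pr:fcompact} to obtain a projective variety~$X$, an embedding~$\theta\colon G\times\FMod\hookrightarrow X\times\FMod$ and an open subset~$U\subset\FMod$ such that for every~$p\in U$ the connected components of the closure of~$\FFam_p$ are exactly the closures of the connected components of~$\FFam_p$, these closures being pairwise disjoint.

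Next, let~$\bar\FFam$ be the closure of~$\theta(\FFam)$ in~$X\times\FMod$. Since~$X$ is complete, the projection~$\pi\colon\bar\FFam\to\FMod$ is proper, so it admits a genuine Stein factorisation
\[
\bar\FFam \labelledto{q} Z \labelledto{r} \FMod,
\]
where~$Z=\mathrm{Spec}_{\FMod}\,\pi_*\mathcal{O}_{\bar\FFam}$ with its reduced structure, the morphism~$q$ has connected fibres, and~$r$ is finite and proper. By construction the fibre of~$r$ over~$p$ indexes the connected components of~$\bar\FFam_p$, and over~$U$ these are, by the choice of the compactification, in bijection with the connected components of~$\FFam_p$.

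Finally I would restrict the Stein map to~$\FFam$ itself. As~$\theta(\FFam)$ is locally closed in~$X\times\FMod$, it is open and dense in~$\bar\FFam$, so~$q$ restricts to a morphism~$\pi_0\colon\FFam\to Z$; one sets~$\pi_0(\FFam)$ to be its image, which over~$U$ is all of the restriction of~$Z$ to~$U$. The key point, and the place where Proposition~\ref{pr:fcompact} is indispensable, is that over~$U$ the fibres of~$\pi_0$ are connected: a fibre of~$q$ is a connected component~$C$ of~$\bar\FFam_p$, which is the closure of a single connected component~$D$ of~$\FFam_p$, and since the closures of distinct components are disjoint one has~$C\cap\FFam_p=D$, an irreducible---hence connected---coset of~$\FFam_p^o$. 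Thus over the general parameter~$\pi_0\colon\FFam\to\pi_0(\FFam)$ has connected fibres while~$\pi_0(\FFam)\to\FMod$ is finite and proper, as required. I expect the only genuine difficulty to lie in this last separation-at-infinity step: were the closures of two components of~$\FFam_p$ to meet inside~$\bar\FFam_p$, the Stein map would merge them and the restricted fibre would be disconnected---which is exactly what passing to the open set~$U$ of Proposition~\ref{pr:fcompact} prevents.
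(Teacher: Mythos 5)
Your proof is correct and follows essentially the same route as the paper's: reduce to the case of an affine~$\FMod$, compactify the family fibrewise via Proposition~\ref{pr:fcompact} so that components of the general fibre stay separated at infinity, take the Stein factorisation of the resulting proper projection, and restrict it to~$\FFam$. The only cosmetic difference is that the paper also compactifies the base ($\FMod\subset Q$) and takes the closure in~$X\times Q$, whereas you observe that completeness of~$X$ alone already makes the closure of~$\theta(\FFam)$ proper over~$\FMod$---both variants yield the same factorisation, and your explicit caveat that connectedness of the fibres of~$\pi_0$ is only guaranteed over the open set~$U$ matches the paper's own framing that the quotient is constructed for the general parameter.
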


The fibre of~$\pi_0(\FFam)$ at~$p\in\FMod$ is the group~$\pi_0(\FFam_p)$, so
we were able to perform the quotient~$\FFam_p/\FFam_p^o$ in all the fibres of
the family~$\FFam$ simultaneously.

\begin{proof}
Let~$j:\FFam\to X\times\FMod$ the embedding provided
by~\ref{pr:fcompact}, it is enough to define~$\pi_0$ on an
open cover of~$\FMod$ so we may restrict ourselves to the case where~$\FMod$ is
affine, and admits a projective compactification~$Q$. Now the wished Stein
factorisation is obtained by restricting to~$\FFam\to \FMod$ the Stein
factorisation of the projection on~$Q$ of the closure of~$j(\FFam)$ in~$X
\times Q$.
\end{proof}


\section{Examples}
\subsubsection{Families of subgroups of tori}

Let~$S$ be a torus. A 1-parameter subgroup~$\lambda$ of~$S$ is characterized
by its initial tangent vector~$\lambda'(1)\in\Gs$. The correspondence between
1-parameter subgroups and initial tangent vectors induces
a group isomorphism between the group of all 1-parameter subgroups of~$S$
and the lattice~$Y \subset \Gs$ of their initial tangent vectors. A~connected
subgroup of~$S$ is a torus as well, and is characterized by the set of
its 1-parameter
subgroups, hence we have a natural bijection between the set~$\SGrp$ of
$k$-dimensional connected subgroups of~$S$ and~$\Grass(k,Y)
\subset\Grass(k,\Gs)$.

\begin{proposition}
Let~$S$ be a torus,~$\FMod$ a variety and~$\FFam$ a family of loose
subgroups of~$S$ parametrised by~$\FMod$. If each irreducible component
of~$\FFam$ dominates~$\FMod$, then~$\FFam$ is a trivial family.
\end{proposition}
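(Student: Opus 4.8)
The plan is to exploit the rigidity of the subgroups of a torus: both the subtori of $S$ and the finite subgroups of $S$ form countable sets, so an algebraic family of them over an irreducible base must be constant on a dense open subset. I first record that the hypothesis forces $\FMod$ to be irreducible. Indeed the projection $\FFam\to\FMod$ is surjective, and the image of each irreducible component of $\FFam$ is irreducible and, by assumption, dense; since these dense images cover $\FMod$ and each already equals $\FMod$, the base $\FMod$ coincides with an irreducible set and is therefore irreducible. By generic flatness and generic reducedness I would then pass to a dense open subset $V\subseteq\FMod$ over which $\FFam\to\FMod$ is flat with reduced fibres of a constant dimension $d$. Because each component of $\FFam$ meets $S\times V$ in a dense open subset, $\FFam$ is the closure of $\FFam|_V$, so it suffices to trivialise $\FFam|_V$. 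Throughout I use that a connected subgroup of $S$ is a subtorus, classified by a rational $d$-plane of $\Gs$, and that the set $\Grass(d,Y)$ of such planes is countable.

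Next I would show the identity component is constant on $V$. By~\ref{co:clcc} the family $\FFam^o$ of identity components is algebraic; shrinking $V$ if necessary it is flat with connected $d$-dimensional fibres, so~\ref{co:diff} provides a morphism $\psi\colon V\to\Grass(d,\Gs)$ with $\psi(p)=\Lie(\FFam^o_p)$. The image of $\psi$ lies in the countable set $\Grass(d,Y)$; since $V$ is irreducible and a countable irreducible subset of a complex variety is a single point, $\psi$ is constant. Hence there is a fixed subtorus $T_0\subseteq S$ with $\FFam^o_p=T_0$ for every $p\in V$.

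Then I would treat the component groups by passing to the quotient torus $\bar S=S/T_0$, with quotient map $q\colon S\to\bar S$. The image $\bar\FFam=(q\times\id)(\FFam|_V)$ is a family of finite subgroups of $\bar S$, and each of its irreducible components dominates $V$ with finite fibres. Every point of such a component projects to a point lying in some finite subgroup $\bar\FFam_p$ of $\bar S$, hence to a torsion point of $\bar S$; as the torsion points of a torus form a countable set, the irreducible projection of each component to $\bar S$ is a single torsion point. Since each component dominates $V$, it is therefore of the form $\{\bar c_i\}\times V$, and taking the union over components gives $\bar\FFam=\bar H\times V$ for a fixed finite subgroup $\bar H\subseteq\bar S$. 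Consequently $\FFam_p=q^{-1}(\bar H)=:H$ for all $p\in V$, that is $\FFam|_V=H\times V$; taking closures and using once more that every component of $\FFam$ meets $S\times V$ densely yields $\FFam=\overline{H\times V}=H\times\FMod$, so $\FFam$ is trivial.

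The main obstacle is the passage through the identity components. Directly, the countability argument fails on $\FFam$ itself, because the points of a positive-dimensional fibre are not torsion; the rigidity only applies to the subtori, through the countability of the rational Grassmannian, and—after quotienting by the now-fixed torus $T_0$—to the finite component groups, through the countability of the torsion points. The delicate point is thus to legitimately produce the classifying morphism to the Grassmannian (invoking~\ref{co:clcc} and~\ref{co:diff} after arranging flatness of $\FFam^o$ over $V$) so that the two countability mechanisms can be deployed one after the other.
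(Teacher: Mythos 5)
Your proof is correct and follows the same two-step strategy as the paper: first make the identity component constant by rigidity of subtori, then kill the component group in the quotient torus. The connected step is essentially identical -- the paper also produces a classifying morphism to the Grassmannian landing in~$\Grass(k,Y)$ and concludes it cannot move, though it argues constancy from the total discontinuity of~$\Grass(k,Y)$ in the euclidean topology rather than from countability. The genuine divergence is in the finite step: the paper invokes Baire's theorem to extract a single exponent~$N$ killing every fibre of the image of~$\FFam$ in~$S/S'\times\FMod$, so that all these fibres sit inside the fixed finite $N$-torsion subgroup~$C$; you instead let each irreducible component of the image family project to the countable set of torsion points of~$\bar S$ and collapse to a single point. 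Your variant is arguably cleaner, since both halves of your argument then run on the same mechanism (an irreducible image inside a countable set is a point) and Baire's theorem is avoided entirely; the paper's Baire argument, on the other hand, reappears verbatim in its treatment of abelian varieties (\ref{le:afinite}), so it serves there as a uniform tool for both settings.

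One caution: as literally stated, ``a countable irreducible subset of a complex variety is a single point'' is false -- a countable dense subset of an irreducible curve is irreducible in the induced Zariski topology. What saves you is that the sets to which you apply it ($\psi(V)$ and the projections of components of~$\bar\FFam$ to~$\bar S$) are images of morphisms of varieties, hence constructible by Chevalley's theorem, and a countable constructible set is finite, so irreducible forces a point. Either insert that constructibility remark, or replace the countability argument by the paper's: the image is connected in the euclidean topology while $\Grass(k,Y)$, respectively the torsion set, is totally disconnected.
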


\begin{proof}
We first assume that~$\FFam$ is irreducible, so that each fibre is
connected~(\ref{co:clcc}).
Let then~$\Lie(\FFam) \subset \Gs\times\FMod$ be the algebraic family whose
fiber at~$p \in \FMod$ is the Lie algebra of~$\FFam_p$. This family is
equidimensional, hence there is an open subset~$\FMod^o$ of~$\FMod$
such that its restriction to~$\FMod^o$ is flat. The universal
property of the Grassmann variety yields a morphism~$\psi: \FMod^o \to \SGrs$
such that~$\Lie(\FFam)$ is the pullback of the tautological bundle
on~$\SGrs$. But~$\psi$ takes its values in~$\Grass(k,Y)$ which, in the euclidean
topology, is a totally discontinuous topological space. Hence~$\psi$
is constant and~$\FFam$ is a trivial family above~$\FMod^o$, it must
then be trivial above all of~$\FMod$.

We now treat the case where~$\FFam$ may have multiple connected
components. By the case where~$\FFam$ is irreducible there is a
subtorus~$S'$ of~$S$ such that~$\FFam^o = S' \times \FMod$. By Baire's
Theorem there is an exponant~$N\in\N$ that kills all the fibres of the
image of~$\FFam$ in~$S/S' \times \FMod$. The subgroup~$C$ of~$S$
consisting of elements killed by~$N$ is finite and if~$C' = \set{ s\in
C \mid \exists p \in \FMod \; s \in \FFam_p }$ then~$\FFam = C' S'
\times \FMod$, so it is a trivial family.
\end{proof}


\subsubsection{Families of subgroups of abelian varieties}

A variation~(\ref{le:arigidity}) on the \emph{rigidity lemma} for projective
morphisms~\cite{MUMFORD} and the Stein factorization enable us to show that
families of subgroups of abelian varieties are constant~(\ref{pr:arigidity}).

\begin{proposition}
\label{co:aclcc}
Let~$A$ be an abelian variety,~$\FMod$ a variety and~$\FFam$ a family of (loose)
subgroups of~$A$ parametrised by~$\FMod$. The set-theoretic family
\[
\FFam^o = \set{ (a,p) \in A \times \FMod \mid a \in \FFam_p^o }
\]
of subgroups of~$A$ is an algebraic family of (loose) subgroups of~$A$.
\end{proposition}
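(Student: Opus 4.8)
The plan is to derive this as the abelian-variety instance of Corollary~\ref{co:clcc}: since~$A$ is an algebraic group and~$\FMod$ a variety, that result applies verbatim. I would nevertheless spell out the direct argument, because the completeness of~$A$ lets us bypass the delicate compactification of~\ref{pr:fcompact}. The entire difficulty in the general case was to keep the connected components of a fibre apart at infinity, but over a complete group there is no infinity to worry about. Concretely, the first thing I would record is that, because~$A$ is complete, the projection~$A\times\FMod\to\FMod$ is proper; as~$\FFam$ is a closed subvariety of~$A\times\FMod$, the restricted projection~$\pi:\FFam\to\FMod$ is proper as well. This properness is exactly the feature special to abelian varieties, and it is what makes the Stein-factorisation machinery available \emph{for free}.

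Next I would form the Stein factorisation of~$\pi$,
\[
\FFam \labelledto{\pi_0} \FFam' \to \FMod,
\]
with~$\pi_0$ proper and having connected fibres and~$\FFam'\to\FMod$ finite, exactly as in~\ref{co:stein} but now without any preliminary compactification. Over a point~$p\in\FMod$ the finite fibre of~$\FFam'\to\FMod$ is the set of connected components of~$\FFam_p$, and the fibre of~$\pi_0$ over each of its points is the corresponding component; in particular the component containing the identity, namely~$\FFam_p^o$, is the fibre of~$\pi_0$ over the point~$\pi_0(e,p)$. Writing~$\sigma:\FMod\to\FFam$, $p\mapsto(e,p)$, for the identity section, which is a closed immersion, the properness of~$\pi_0$ makes~$\pi_0(\sigma(\FMod))$ closed in~$\FFam'$, whence
\[
\FFam^o = \inv{\pi_0}\bigl(\pi_0(\sigma(\FMod))\bigr)
\]
is closed in~$\FFam$. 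Being closed it is an algebraic subvariety of~$A\times\FMod$; each fibre~$\FFam_p^o$ is open in~$\FFam_p$, so the family is reduced whenever~$\FFam$ is, and it is a family of (loose) subgroups of~$A$ parametrised by~$\FMod$.

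The step demanding care is the identification of~$\FFam^o$ with the preimage~$\inv{\pi_0}(\pi_0(\sigma(\FMod)))$: I must check that the connected fibres produced by the Stein factorisation are \emph{exactly} the set-theoretic connected components of the group fibres~$\FFam_p$, so that no component is split or merged, and that the identity section selects precisely the component of~$e$, i.e.\ the identity component and nothing larger. Both points follow from the standard properness formalism together with the fact that, over the complete group~$A$, each closed subgroup~$\FFam_p$ is itself complete, with finitely many connected components that are the cosets of the abelian subvariety~$\FFam_p^o$; the reducedness bookkeeping in the loose case is handled exactly as in the proof of~\ref{co:clcc}.
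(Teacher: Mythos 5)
Your proposal is correct and in essence follows the paper's own argument: both proofs establish closedness of~$\FFam^o$ through a Stein factorisation made available by the completeness of~$A$, both cut out the identity components as the preimage under the connected-fibres morphism of the image of the identity section, and both handle reducedness in the loose case by the same fibrewise open-immersion remark. The only deviation is bookkeeping: the paper first localises to affine~$\FMod$, passes to a projective completion~$X$ and Stein-factorises the projection of the closure of~$\FFam$ in~$A\times X$, whereas you Stein-factorise the proper morphism~$\FFam\to\FMod$ directly, which is legitimate provided you grant Stein factorisation for proper (not merely projective) morphisms and that~$\FFam$ is closed in~$A\times\FMod$, as the paper's notion of family indeed provides.
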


\begin{proof}
We show that~$\FFam^o$ is a closed subset of~$\FFam$. Since being closed
in~$A \times \FMod$ is a local property, we may assume that~$P$ is affine, and
therefore admits a projective completion~$X$.

Now consider the Stein factorisation
\[
\bar\FFam \labelledto{f} X' \to X
\]
of the projection to~$X$ of the closure~$\bar\FFam$ of~$\FFam$ in~$A \times
X$: the variety~$X'$ is projective, the morphism~$f$ has connected fibers
and~$X' \to X$ is finite. We have then
\[
\FFam^o = \FFam \cap \inv f(f( \set{e}\times X),
\]
which shows that this set is closed. For each~$p$, the map~$\FFam_p^o \to
\FFam_p$ is an open immersion, hence each~$\FFam_p^o$ is reduced when
each~$\FFam_p$ is.
\end{proof}

\begin{corollary}
\label{co:aconnected}
Let~$A$ be an abelian variety,~$\FMod$ a variety and~$\FFam$ a family of (loose)
subgroups of~$A$ parametrised by~$\FMod$. If~$\FFam$ is irreducible, then each
of its fibre is connected.
\end{corollary}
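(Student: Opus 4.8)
The plan is to deduce this from the closedness statement just established in Proposition~\ref{co:aclcc}, namely that the family of identity components~$\FFam^o$ is a closed subset of~$\FFam$, and then to show by a dimension count that this closed subset is in fact all of~$\FFam$. Since~$\FFam$ is irreducible and the projection~$\pi:\FFam\to\FMod$ is surjective, the base~$\FMod$ is irreducible, being the image of an irreducible variety; I set~$d = \dim\FFam - \dim\FMod$, the dimension of the generic fibre of~$\pi$.

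First I would observe that~$\FFam^o$ surjects onto~$\FMod$, since it contains the identity section~$\set{e}\times\FMod$. Next, for~$p$ in a dense open subset~$V$ of~$\FMod$ the theorem on fibre dimensions gives~$\dim\FFam_p = d$; because~$\FFam_p^o$ is a connected component of the group~$\FFam_p$, and all connected components of a group are translates of one another, it has the same dimension~$d$. Hence the generic fibre of the surjection~$\FFam^o\to\FMod$ also has dimension~$d$.

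Then I would select an irreducible component~$Z$ of~$\FFam^o$ that dominates~$\FMod$ and realises this generic fibre dimension: the finitely many non-dominating components contribute empty fibres over a general point, so some dominating component~$Z$ satisfies~$\dim Z_p = d$ for general~$p$, whence~$\dim Z = \dim\FMod + d = \dim\FFam$. As~$Z$ is a closed irreducible subset of the irreducible variety~$\FFam$ of the same dimension, we get~$Z = \FFam$, and therefore~$\FFam^o = \FFam$. This means every fibre satisfies~$\FFam_p = \FFam_p^o$ and is connected.

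The one point demanding care---and the main obstacle---is the passage from fibrewise information to the total space: the set~$\FFam^o$ need not be irreducible a priori, so I cannot apply the fibre dimension theorem to it directly. The argument circumvents this by locating a single full-dimensional component rather than reasoning about~$\FFam^o$ as a whole. Alternatively, one could argue that~$\FFam^o$ is at once closed and open in~$\FFam$, using the separation of the connected components at infinity furnished by the Stein factorisation in the proof of~\ref{co:aclcc}, and then conclude directly from the connectedness of the irreducible variety~$\FFam$.
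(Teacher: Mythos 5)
Your argument is correct and is essentially the paper's route: the paper states this corollary without a written proof, as an immediate consequence of the closedness of~$\FFam^o$ established in Proposition~\ref{co:aclcc}, and your dimension count (a dominating component of the closed set~$\FFam^o$ has full fibre dimension~$d$ since all components of an algebraic group are cosets of~$\FFam_p^o$, hence has dimension~$\dim\FFam$ and must equal the irreducible~$\FFam$) is exactly the bookkeeping that makes it immediate. One caveat on your closing aside: openness of~$\FFam^o$ in~$\FFam$ is not furnished by~\ref{co:aclcc}, which gives only closedness, and non-identity components can a priori degenerate onto the identity component (compare Example~\ref{ex:reduced}, where the two components of the normaliser of a torus merge into a Borel subgroup at the special fibres), so the clopen argument would need genuine extra work---but your main proof does not rely on it.
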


\begin{corollary}
\label{co:aconnirr}
Let~$A$ be an abelian variety,~$\FMod$ a variety and~$\FFam$ a family of loose
subgroups of~$A$ parametrised by~$\FMod$. If each irreducible component
of~$\FFam$ dominates~$\FMod$, then~$\FFam^o$ is irreducible.
\end{corollary}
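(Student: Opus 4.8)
The plan is to derive the irreducibility of~$\FFam^o$ from the fact that all of its fibres are irreducible, using the hypothesis on the components of~$\FFam$ only to rule out ``extra'' components sitting over proper closed subvarieties of~$\FMod$; here I assume~$\FMod$ irreducible, as it must be for the conclusion to hold. First I would record two elementary observations. Since~$A$ is an abelian variety, every connected algebraic subgroup of~$A$ is an abelian subvariety, hence irreducible, so each fibre~$\FFam^o_p = \FFam_p^o$ of~$\FFam^o$ is irreducible. By~\ref{co:aclcc} the set-theoretic family~$\FFam^o$ is algebraic, and as~$e\in\FFam_p^o$ for every~$p$ the identity section~$\set{(e,p)\mid p\in\FMod}$ lies in~$\FFam^o$, so the projection~$\pi\colon\FFam^o\to\FMod$ is surjective.

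These observations reduce the problem to a single point. I would invoke the elementary fact that a surjective morphism~$\pi\colon Y\to Z$ onto an irreducible variety whose generic fibre is irreducible and \emph{every} irreducible component of whose source dominates~$Z$ has irreducible source: over a dense open~$V\subseteq Z$ each component meets the fibre, irreducibility of the fibre forces one component to contain all the others over~$V$, and passing to closures leaves a single component. As the generic fibre~$\FFam_\eta^o$ of~$\pi$ is irreducible, it therefore suffices to prove that \emph{every irreducible component of~$\FFam^o$ dominates~$\FMod$}.

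For this I would return to the Stein factorisation used in the proof of~\ref{co:aclcc}. Taking~$\FMod$ affine with projective completion~$X$, writing~$\bar\FFam$ for the closure of~$\FFam$ in~$A\times X$, we have
\[
\bar\FFam\labelledto{f} X' \to X,
\]
with~$f$ proper with connected fibres and~$X'\to X$ finite, and~$\FFam^o = \FFam\cap \inv f(f(\set{(e,p)\mid p\in X}))$. Because the composite~$\bar\FFam\to X$ restricts to the identity on~$\set{(e,p)\mid p\in X}$, the map~$f$ carries this section isomorphically onto a subvariety~$S\subseteq X'$ which is a section of the finite, hence separated, morphism~$X'\to X$; thus~$S$ is irreducible, and over each~$p$ the fibre of~$f$ above~$S$ is exactly the identity component~$\FFam_p^o$. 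Consequently~$\bar\FFam^o=\inv f(S)$ fibres over the irreducible~$S$ with irreducible fibres, and the task is to exclude a component~$W$ of~$\FFam^o$ lying over a proper closed~$Z=\overline{\pi(W)}\subsetneq\FMod$.

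The hypothesis enters here: every irreducible component of~$\FFam$ dominates~$\FMod$, so every component~$\bar\FFam^{(i)}$ of~$\bar\FFam$ dominates~$X$ with generic fibre of some fixed dimension~$d_i$. A component~$W$ as above is irreducible, hence contained in some~$\bar\FFam^{(i)}$, and over generic~$p\in Z$ its fibre~$W_p$ is closed in the irreducible~$\FFam_p^o$; one then compares~$\dim W=\dim Z+\dim W_p$ with~$\dim\bar\FFam^{(i)}=\dim\FMod+d_i$ and with the closure of the generic identity components carried by the component of~$\FFam^o$ through the identity section, to conclude that~$W$ is already contained in that closure and is not a separate component. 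I expect precisely this last step to be the main obstacle: it amounts to controlling how the dimension of the identity component~$\FFam_p^o$ may jump over proper closed subvarieties and to verifying that the domination hypothesis is exactly strong enough to forbid a jump producing a spurious lower component of~$\FFam^o$. The remaining ingredients---irreducibility of the fibres and the section property of~$S$---are formal.
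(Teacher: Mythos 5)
Your reduction is, in outline, exactly the paper's own argument: the paper too picks an irreducible component $\FFam^1$ of $\FFam^o$ whose general fibre has the minimal fibre dimension $n$, observes (all fibres $\FFam^o_p$ being irreducible) that $\FFam^1$ and $\FFam^o$ agree over a dense open subset of $\FMod$, and then absorbs every other component into $\FFam^1$ using precisely the statement you isolate and leave open: \emph{every irreducible component of $\FFam^o$ dominates $\FMod$}. You have correctly located the crux --- interestingly, the paper itself asserts this clause (``since each irreducible component $\FFam^2$ of $\FFam^o$ dominates $P$'') with no justification, although the hypothesis concerns components of $\FFam$, and closedness of $\FFam^o$ in $\FFam$ (\ref{co:aclcc}) does not transfer it. But since your task was to prove the corollary, leaving this step unproven is a genuine gap, and the dimension comparison you sketch cannot close it: for a non-flat family nothing in dimension theory prevents the fibre dimension from jumping up over a proper closed subset $Z\subsetneq\FMod$ (compare the exceptional fibres of a birational morphism), and a jump $\dim\FFam^o_{p_0}>n$ is exactly what would produce the spurious component $W$; comparing $\dim W=\dim Z+\dim W_p$ with $\dim\FMod+d_i$ yields no contradiction whatsoever.

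What excludes the jump is the group structure in the abelian variety, not a dimension count. A way to close the gap, in the spirit of the Baire-type arguments the paper uses in \ref{le:afinite} and for tori: $A$ has only countably many abelian subvarieties, and the level sets of $p\mapsto\FFam^o_p$ are constructible, so one of them is dense and there are a dense open $V\subseteq\FMod$ and a fixed abelian subvariety $B$ with $\FFam^o_p=B$ for all $p\in V$; shrinking $V$, the number $m$ of connected components of $\FFam_p$ is constant on $V$, so each class of $\FFam_p/B$ is killed by $m$ and $\FFam_p$ lies in the closed subgroup $C=q^{-1}(\{a\in A/B\mid ma=0\})$, where $q\colon A\to A/B$, whose identity component is $B$. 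The domination hypothesis now enters exactly once: it gives $\FFam=\overline{\FFam\cap(A\times V)}\subseteq C\times\FMod$, while $B\times V\subseteq\FFam$ and irreducibility of $\FMod$ give $B\times\FMod\subseteq\FFam$; hence $B\subseteq\FFam_p\subseteq C$ for every $p$, so $\FFam^o_p=B$ for all $p$ and $\FFam^o=B\times\FMod$ is irreducible outright --- in particular every component of $\FFam^o$ dominates, which is what both your argument and the paper's needed. With some such insertion your proof is complete; as written, it stops exactly where the real content begins.
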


\begin{proof}
The fibres of~$\FFam^o\to\FMod$ are irreducible, let~$n$ be their minimal
dimension. By Chevalley's Theorem for the dimensions of the fibres of a
morphism, the general fibre of~$\FFam^o\to\FMod$ has dimension~$n$. There
exists an irreducible component~$\FFam^1$ of~$\FFam^o$ whose general fibre has
dimension~$n$. Hence~$\FFam^1$ and~$\FFam^o$ agree above a dense open subset
of~$\FMod$. Since each irreducible~$\FFam^2$ component of~$\FFam^o$
dominates~$P$, it meets~$\FFam^1$ along a dense open subset of~$\FFam^2$, so
that~$\FFam^2 \subset\FFam^1$. We conclude that~$\FFam^o$ is irreducible.
\end{proof}

\begin{proposition}
\label{pr:arigidity}
Let~$A$ be an abelian variety,~$\FMod$ a variety and~$\FFam$ a family of loose
subgroups of~$A$ parametrised by~$\FMod$. If each irreducible component
of~$\FFam$ dominates~$\FMod$, then~$\FFam$ is a trivial family.
\end{proposition}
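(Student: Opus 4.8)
Proposition~\ref{pr:arigidity} asserts that a family of loose subgroups of an abelian variety, each of whose irreducible components dominates the base, is trivial.

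The plan is to first trivialise the family $\FFam^o$ of identity components by a rigidity argument, and then to recover $\FFam$ from $\FFam^o$ by bounding the component groups via the Stein factorisation of Corollary~\ref{co:stein}. This mirrors the structure of the torus case, with the discreteness of the lattice of subtori replaced by the rigidity lemma~\ref{le:arigidity}.

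First I would pass to~$\FFam^o$. By Corollary~\ref{co:aconnirr} the hypothesis that every irreducible component of~$\FFam$ dominates~$\FMod$ makes~$\FFam^o$ irreducible, and each fibre~$\FFam_p^o$ is then an abelian subvariety of~$A$ (by Corollary~\ref{co:aconnected}, each fibre of the irreducible family~$\FFam^o$ is connected). By generic flatness there is a dense open subset~$\FMod^o$ of~$\FMod$ over which~$\FFam^o\to\FMod^o$ is flat with fibres of a constant dimension~$k$; I fix a base point~$p_0\in\FMod^o$ and write~$B = \FFam_{p_0}^o$, an abelian subvariety of dimension~$k$. Let~$q: A\to A/B$ be the quotient morphism onto the abelian variety~$A/B$, whose kernel is~$B$.

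The core step is the rigidity argument, which I expect to be the main obstacle. Consider the morphism
\[
\Phi\colon \restrict{\FMod^o}{\FFam^o}\to A/B,\qquad (a,p)\mapsto q(a).
\]
Over~$p_0$ the whole fibre~$B$ is contracted by~$\Phi$ to the single point~$q(e)$. Since~$\restrict{\FMod^o}{\FFam^o}\to\FMod^o$ is proper (as~$A$ is projective) with connected fibres, irreducible total space and connected base, the variation on the rigidity lemma~\ref{le:arigidity} applies and forces~$\Phi$ to factor through the projection to~$\FMod^o$. Evaluating this factorisation on the zero section~$p\mapsto(e,p)$, which lies in~$\FFam^o$ because every fibre is a subgroup containing~$e$, shows that it is the constant map with value~$q(e)$; hence~$q(\FFam_p^o)=\set{q(e)}$ and~$\FFam_p^o\subset B$ for every~$p\in\FMod^o$. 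As the fibres all have dimension~$k=\dim B$ and are connected subgroups, equality~$\FFam_p^o=B$ follows. Thus~$\restrict{\FMod^o}{\FFam^o}=B\times\FMod^o$, and taking closures in the irreducible variety~$\FFam^o$ gives~$\FFam^o=B\times\FMod$. The delicate points here are verifying the hypotheses of~\ref{le:arigidity} in the relative setting and excluding jumps in fibre dimension, which is exactly why I restrict to the flat locus~$\FMod^o$ before taking closures.

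It remains to descend from~$\FFam^o$ to~$\FFam$, and this step I expect to be routine given the earlier results. By Corollary~\ref{co:stein} the Stein factorisation~$\FFam\to\pi_0(\FFam)\to\FMod$ has finite second map, so the orders of the component groups~$\pi_0(\FFam_p)=\FFam_p/B$ are bounded by a single integer~$N$, and each~$\FFam_p/B$ is therefore a subgroup of the finite torsion group~$(A/B)[N]$. Consequently the image of~$\FFam$ in~$(A/B)\times\FMod$ lands in the finite set~$(A/B)[N]$, so each irreducible component~$\FFam^{(i)}$ of~$\FFam$, being irreducible and dominating~$\FMod$, maps to a single point~$c_i\in(A/B)[N]$. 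Choosing a lift~$\tilde c_i\in A$, the component~$\FFam^{(i)}$ is contained in the irreducible variety~$(\tilde c_i B)\times\FMod$ of dimension~$\dim\FMod+k$; since~$\FFam^{(i)}_p$ is a union of connected components of~$\FFam_p=B\cdot(\text{cosets})$ lying in the single coset~$\tilde c_i B$, the general fibre of~$\FFam^{(i)}$ is exactly~$\tilde c_i B$, so~$\FFam^{(i)}$ has the same dimension as~$(\tilde c_i B)\times\FMod$ and hence equals it. Taking the union over all components yields~$\FFam=H\times\FMod$ with~$H=\bigcup_i \tilde c_i B$, and~$H$ is a subgroup of~$A$ because it is a fibre~$\FFam_p$; this is the desired triviality.
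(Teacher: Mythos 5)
Your proof is correct, and its first half is essentially the paper's: the paper isolates your rigidity step as Lemma~\ref{le:aconstant}, where one takes the irreducible family~$\FFam^o$ (irreducible by~\ref{co:aconnirr}, exactly as you argue), picks a fibre~$B$, invokes Chow's theorem to produce the quotient homomorphism~$A\to A/B$, and applies the rigidity lemma~\ref{le:arigidity} via the zero section to force every fibre into~$B$, concluding with the theorem on fibre dimensions. Your explicit restriction to the flat locus before choosing the base point~$p_0$ is a welcome precision here, since the paper's choice of~$p$ (``dominated by a connected fiber'') is only implicitly the generic one needed for the dimension count to close. Where you genuinely diverge is the finite part. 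The paper pushes the whole family forward under~$f_\FMod$, obtains an algebraic family of loose \emph{finite} subgroups of~$A/B$ whose components dominate~$\FMod$, and trivialises it by Lemma~\ref{le:afinite}, whose common exponent~$N$ comes from Baire's theorem (the same device as in the torus example). You instead extract the bound from the finiteness of the Stein factorisation~\ref{co:stein}: the fibre cardinalities of the finite morphism~$\pi_0(\FFam)\to\FMod$ are uniformly bounded, which bounds the component groups. This is a purely algebraic substitute for the Baire argument, and it has the merit of reusing machinery the paper has already built; both routes then conclude by observing that components mapping into a finite subset of~$A/B$ must sit over single points.

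One step deserves a patch: you assert that~$\FFam^{(i)}_p$ is a union of connected components of~$\FFam_p$, but a fibre of an irreducible component of~$\FFam$ need not a priori be a union of components of the full fibre (several components of~$\FFam$ could in principle share the coset). The conclusion survives without this claim, and without your dimension count: the image of~$\FFam^{(i)}$ in~$\FMod$ contains a dense open subset~$U$, and for~$p\in U$ the fibre~$\FFam_p$ meets the coset~$\tilde c_i B$, hence contains it, because the connected components of~$\FFam_p$ are exactly the cosets of~$B=\FFam^o_p$. Thus~$\tilde c_i B\times U\subset\FFam$, and since families are closed subvarieties its closure~$\tilde c_i B\times\FMod$ is an irreducible closed subset of~$\FFam$ containing~$\FFam^{(i)}$ (you already showed~$\FFam^{(i)}\subset\tilde c_i B\times\FMod$). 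Maximality of irreducible components then gives~$\FFam^{(i)}=\tilde c_i B\times\FMod$ directly, and the rest of your descent goes through unchanged.
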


We start with two lemmas:

\begin{lemma}[Rigidity lemma variant]
\label{le:arigidity}
Let~$X$ be a complete variety, $Y$ and~$Z$ any varieties, $\VFam \subset X
\times Y$ an irreducible family of connected subvarieties of~$X$ parametrised
by~$Y$ admitting a section, and~$f: \VFam \to Z$ a morphism. If there exists a
member
of~$\VFam$ contracted to a point by~$f$, then there is a morphism~$g: Y \to Z$
such that~$f = g \circ \pr_Y$, where~$\pr_Y$ is the projection of~$\VFam$
to~$Y$.
\end{lemma}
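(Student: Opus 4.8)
The plan is to adapt the \emph{classical rigidity lemma} for complete varieties to the relative setting carried by the family $\VFam$. Write $\pr_Y \colon \VFam \to Y$ for the projection and let $s \colon Y \to \VFam$ be the given section, so $\pr_Y \circ s = \id_Y$ and $s(y)\in\VFam_y$. I would set $g = f \circ s$, which is a morphism $Y \to Z$, and then prove the single identity $f = g \circ \pr_Y$. The two structural facts I would lean on throughout are that, $X$ being complete and $\VFam$ being closed in $X \times Y$, the projection $\pr_Y$ is proper and hence a closed map, and that consequently each fibre $\VFam_y$ is a complete connected variety.

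First I would localise on the target. Let $y_0 \in Y$ be a parameter whose fibre $\VFam_{y_0}$ is contracted by $f$ to a point $z_0 \in Z$, and choose an affine open neighbourhood $U$ of $z_0$ in $Z$. The set $f^{-1}(Z \setminus U)$ is closed in $\VFam$ and disjoint from $\VFam_{y_0}$; since $\pr_Y$ is closed, its image $\pr_Y(f^{-1}(Z \setminus U))$ is a closed subset of $Y$ not containing $y_0$. Its complement $V$ is therefore an open neighbourhood of $y_0$, and by construction $f(\VFam_y) \subset U$ for every $y \in V$.

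Next I would run the rigidity step fibre by fibre over $V$. For $y \in V$ the fibre $\VFam_y$ is complete and connected and is mapped by $f$ into the affine variety $U$; since a morphism from a complete connected variety to an affine variety is constant, $f(\VFam_y)$ is a single point. That common value is $f(s(y)) = g(y)$, so the identity $f = g \circ \pr_Y$ already holds on the open set $\pr_Y^{-1}(V)$.

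Finally I would spread this identity over all of $Y$. Because $\VFam$ is irreducible and $\pr_Y^{-1}(V)$ is nonempty (it contains $\VFam_{y_0}$), the latter is dense in $\VFam$; the two morphisms $f$ and $g \circ \pr_Y$ into the separated variety $Z$ therefore agree on a dense subset of the reduced space $\VFam$, hence everywhere, which is the claim. The main obstacle is the local reduction of the second paragraph: one must use completeness of $X$ to guarantee that $\pr_Y$ is closed, so that the good locus $V$ is genuinely open, and that the fibres stay complete and connected, so that contraction at the single parameter $y_0$ propagates to an entire neighbourhood; the concluding spreading-out is then a formal consequence of the irreducibility of $\VFam$ and the separatedness of $Z$.
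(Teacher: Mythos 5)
Your proof is correct and takes essentially the same approach as the paper's: define $g = f\circ s$ via the section, use completeness of~$X$ to show that the set~$V$ of parameters whose fibre lands in an affine neighbourhood~$U$ of~$z_0$ is open and nonempty, contract each complete connected fibre over~$V$ to the single point~$g(y)$, and extend the identity~$f = g\circ\pr_Y$ from the dense set~$\pr_Y^{-1}(V)$ by irreducibility of~$\VFam$. Your explicit appeal to separatedness of~$Z$ in the final spreading-out step is a minor tidying of what the paper leaves implicit, not a different argument.
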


\begin{proof}
Let~$s: Y \to \VFam$ be a section of~$\VFam$ and let~$g: Y \to Z$ be the map
defined by~$g(y) = f(s(y),y)$. Since~$\VFam$ is irreducible, we only need to
show that~$f$ and~$g \circ \pr_Y$ agree on a dense open subset
of~$\VFam$.

Let~$y_0$ be a parameter such that~$\VFam_{y_0}$ is contracted to
a point~$z_0$ by~$f$, and let~$U$ an affine neighbourhood of~$z_0$
in~$Z$. Let~$F = Z \setminus U$ its complement and~$G = \pr_Y \inv
f(F)$. Since~$\inv f(F)$ is a closed subset of~$X \times Y$ and~$X$ is
complete, the projection~$G$ of~$\inv f(F)$ on~$Y$ is closed. Further~$y_0
\not\in G$ since~$f(\VFam_{y_0}) = { z_0 }$. Therefore~$V = Y - G$ is a
non-empty open subset of~$Y$, for each~$y \in V$ the complete
connected variety~$\VFam_y$ gets mapped by~$f$ into the affine variety~$U$,
hence to a single point of~$U$. But this means that for any~$x \in X$, $y\in
V$, we have~$f(x,y) = f(s(y),y) = g \circ \pr_Y(x,y)$.
\end{proof}

\begin{lemma}
\label{le:aconstant}
Let~$A$ be an abelian variety,~$\FMod$ a variety and~$\FFam$ a family of loose
subgroups of~$A$ parametrised by~$\FMod$. If~$\FFam$ is irreducible, then it
is a trivial family.
\end{lemma}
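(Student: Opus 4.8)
The plan is to deduce triviality from the rigidity lemma~\ref{le:arigidity}, by using one chosen fibre to manufacture a morphism that contracts a member of the family.

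First I would set up the hypotheses of the rigidity lemma. Since~$\FFam$ is irreducible, every fibre~$\FFam_p$ is connected by~\ref{co:aconnected}; being a connected subgroup of the abelian variety~$A$, each~$\FFam_p$ is an abelian subvariety, hence closed and complete, and in particular contains the identity~$e$. Consequently~$\FFam$ admits the section~$\sigma\colon\FMod\to\FFam$, $p\mapsto(e,p)$, and~$A$ is complete, so~\ref{le:arigidity} is applicable to~$\VFam=\FFam$ with~$X=A$ and~$Y=\FMod$.

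Next I would fix a parameter~$p_0\in\FMod$, set~$H=\FFam_{p_0}$, and let~$\phi\colon A\to A/H$ be the quotient morphism. The morphism to consider is
\[
f=\phi\circ\pr_A\colon\FFam\longrightarrow A/H,
\qquad (a,p)\mapsto\phi(a).
\]
By construction the member~$\FFam_{p_0}=H$ is contracted by~$f$ to the point~$0\in A/H$, so the rigidity lemma furnishes a morphism~$g\colon\FMod\to A/H$ with~$f=g\circ\pr_\FMod$. Evaluating along the section gives~$g(p)=f(\sigma(p))=\phi(e)=0$ for every~$p$, hence~$g\equiv0$ and therefore~$f\equiv0$. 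This says exactly that~$\phi(a)=0$ for all~$(a,p)\in\FFam$, that is,~$\FFam_p\subseteq\ker\phi=H=\FFam_{p_0}$ for every~$p\in\FMod$.

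Finally, since~$p_0$ was arbitrary, the inclusion~$\FFam_p\subseteq\FFam_{p'}$ holds for every pair~$p,p'\in\FMod$; exchanging the roles of~$p$ and~$p'$ yields the reverse inclusion, so all fibres coincide with a single subgroup~$H$. As~$\FFam$ is a reduced subvariety of~$A\times\FMod$ whose support is~$H\times\FMod$, it equals~$H\times\FMod$, so the family is trivial. The step I expect to carry the real content is the choice of the contracting morphism~$f=\phi\circ\pr_A$: once the quotient by a chosen fibre is in place, the contraction of~$\FFam_{p_0}$ and the evaluation~$g=f\circ\sigma=0$ are formal, and the symmetry in~$p_0$ is precisely what upgrades the inclusion of fibres into their equality.
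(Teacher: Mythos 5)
Your proof is correct and follows essentially the same route as the paper: both apply the rigidity lemma~\ref{le:arigidity} to the composition of the projection to~$A$ with the homomorphism (the quotient map provided by Chow's theorem) killing a chosen fibre, and evaluate the resulting factorisation along the identity section to conclude~$\FFam_q \subseteq \FFam_{p_0}$ for every~$q$. Your endgame is in fact slightly cleaner: by letting~$p_0$ range over all of~$\FMod$ and using symmetry you get equality of all fibres at once, whereas the paper fixes a single fibre and must still invoke the theorem on the dimension of fibres of a morphism together with an irreducibility/closure argument to upgrade the inclusions to triviality.
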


\begin{proof}
Since~$\FFam$ is irreducible, it has connected fibres~(\ref{co:aconnected}).
Let~$p$ be a point of~$\FMod$ dominated by a connected
fiber~$\FFam_p$ of~$\FFam$. According to a theorem of
Chow~\cite[Theorem~1]{CHOW} there exists an abelian variety~$Z$ and a group
homomorphism~$f_0: A \to Z$ whose kernel is~$\FFam_p$. Let~$f$ be the
restriction to~$\FFam$ of~$f_0 \times \id_\FMod$. The fiber at~$p$ is
contracted by~$f$ to the unit element of~$Z$, hence it follows from
Lemma~\ref{le:arigidity} that~$f$ can be factorized out through the
projection~$pr_\FMod$ of~$\FFam \to \FMod$. Consequently, each fiber~$\FFam_q$
of~$\FFam$ is contained in the kernel~$\FFam_p$ of~$f_0$. We conclude with the
theorem on the dimension of the fibers of a morphism that there is an open
subset~$V$ of~$\FMod$ above which  each fiber actually
equals~$\FFam_p$. But~$\FFam$ is irreducible and equals the closure
of~$\FFam_p \times V$ in~$A \times \FMod$ that is, that the family~$\FFam$ is
trivial.
\end{proof}

\begin{lemma}
\label{le:afinite}
Let~$A$ be an abelian variety,~$\FMod$ a variety and~$\FFam$ a family of loose
finite subgroups of~$A$ parametrised by~$\FMod$. If each irreducible component
of~$\FFam$ dominates~$\FMod$, then~$\FFam$ is a trivial family.
\end{lemma}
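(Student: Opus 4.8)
The plan is to push everything onto a dense open subset of~$\FMod$ over which the projection is finite and reduced, to pin down the fibres there by exploiting the fact that a finite subgroup of~$A$ consists of torsion points of bounded order, and finally to recover the whole family by taking closures. Since each irreducible component of~$\FFam$ dominates~$\FMod$, its image closure is an irreducible set equal to~$\FMod$, so~$\FMod$ is irreducible and I will assume this throughout.

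First I would choose a dense open subset~$V$ of~$\FMod$ over which the projection~$\FFam\to\FMod$ becomes well behaved. As every fibre~$\FFam_p$ is finite, each irreducible component~$Z$ of~$\FFam$ maps to~$\FMod$ by a dominant, generically finite morphism, which in characteristic zero is generically \'etale. Shrinking~$V$, I may therefore assume that~$\FFam|_V\to V$ is finite, that all its fibres are reduced, and that the cardinality~$\norm{\FFam_p}$ equals a constant~$m$ for~$p\in V$. For such~$p$ the fibre~$\FFam_p$ is a genuine finite subgroup of~$A$ of order~$m$, so by Lagrange every~$a\in\FFam_p$ satisfies~$m\cdot a=0$; that is,~$\FFam_p$ is contained in the finite group~$A[m]=\set{a\in A\mid m\cdot a=0}$ of~$m$-torsion points.

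The decisive step is then immediate. Over~$V$ we have~$\FFam|_V\subset A[m]\times V$, and since~$A[m]$ is finite the right-hand side is the disjoint union of the irreducible closed subsets~$\set{a}\times V$. Each irreducible component of~$\FFam|_V$ lies in one such~$\set{a}\times V$ and dominates~$V$, hence, being irreducible of the same dimension, equals~$\set{a}\times V$. Thus~$\FFam|_V=S\times V$ for the subset~$S=\FFam_p\subset A[m]$, which is independent of~$p\in V$ and is a subgroup of~$A$; in other words the family is already trivial over~$V$. Equivalently, one may phrase this as the remark that any section of~$\FFam|_V\to V$ is a morphism from the irreducible variety~$V$ to the finite set~$A[m]$, hence constant.

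Finally I would propagate triviality to all of~$\FMod$. Because each irreducible component of~$\FFam$ dominates~$\FMod$ and~$V$ is dense, the restriction~$\FFam|_V$ is dense in~$\FFam$, so~$\FFam$ is the closure of~$S\times V$ in~$A\times\FMod$; as~$S$ is finite this closure is~$S\times\FMod$, whence~$\FFam=S\times\FMod$ is a trivial, and in particular reduced, family. The only point demanding care is the reduction to~$V$: I must make sure that generic \'etaleness and generic reducedness really produce a constant, reduced, finite fibre structure so that the torsion bound applies uniformly. Once this is secured the torsion argument and the closure step are routine, and notably neither the rigidity lemma~\ref{le:arigidity} nor Lemma~\ref{le:aconstant} is needed for this finite case.
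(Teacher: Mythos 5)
Your proof is correct, but it reaches the crucial uniform torsion bound by a genuinely different mechanism than the paper. The paper's proof is very short: by Baire's theorem (each finite fibre is killed by some exponent, and there are only countably many exponents) there is a single~$N$ killing every fibre, so~$\FFam$ sits inside~$B\times\FMod$ with~$B = \set{a\in A \mid Na = 0}$ finite; then, exactly as in your final step, each irreducible component lies in some~$\set{a}\times\FMod$, dominates~$\FMod$, and therefore equals it, giving~$\FFam = B'\times\FMod$. You instead obtain the bound algebraically: over a dense open~$V$ the projection is finite and, in characteristic zero after shrinking, \'etale of constant set-theoretic degree~$m$, so Lagrange places every fibre over~$V$ inside~$A[m]$; triviality over~$V$ then propagates by the closure argument, using the domination hypothesis a second time. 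The endgame is identical in both proofs; what differs is Baire category and the euclidean topology versus generic finiteness/\'etaleness and Lagrange. Your route is purely algebro-geometric and makes explicit the generic-to-global step that the paper's one-line appeal to Baire quietly relies on (the level sets of the exponent are only constructible, so Baire first yields the exponent on a dense subset, and domination is needed to spread it); the paper's route is shorter and needs no control of fibre cardinality or of how components meet over~$V$ --- any uniform exponent suffices, even a crude one like~$m!$ in your setting. The point you flagged as delicate is indeed routine: pairwise intersections of irreducible components have dimension strictly less than~$\dim\FMod$, hence do not dominate, so the components are disjoint over a dense open and the fibre cardinality is constant there; alternatively you could skip \'etaleness entirely, since for a finite morphism the set-theoretic fibre size is bounded by the degree, and order at most~$d$ already gives torsion killed by~$d!$. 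Your closing remark that neither Lemma~\ref{le:arigidity} nor Lemma~\ref{le:aconstant} is needed agrees with the paper, whose proof of this lemma likewise uses neither.
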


\begin{proof}
By Baire's Theorem, there is a common exponent~$N\in\N$ that kills all the
fibres of~$\FFam$. But the subgroup~$B$ of~$A$ consisting of its elements of
order~$N$ is finite, and~$\FFam$ is a subvariety of~$B \times \FMod$. Since
each irreducible component of~$\FFam$ dominates~$\FMod$, this subvariety is of
the form~$B'\times \FMod$ for some subgroup~$B'$ of~$B$.
\end{proof}

\begin{proof}[Proof of~\ref{pr:arigidity}]
By~\ref{co:aconnirr} the family~$\FFam^o$ of connected components of~$\FFam$
is irreducible, and by~\ref{le:aconstant} it must be trivial. Let~$B$ be the
connected closed subgroup of~$A$ such that~$\FFam^o = B \times
\FMod$. According to Chow~\cite[Theorem~1]{CHOW} there is an abelian
variety~$Z$ and a morphism of algebraic groups~$f:A\to Z$ whose kernel is~$B$.

We denote by~$f_\FMod$ the morphism~$A \times\FMod \to Z \times \FMod$
associated to~$f$ and by~$f_\FMod(\FFam)$ the image of~$\FFam$ through this
morphism. It is a closed subvariety of~$Z \times \FMod$, and defines a family
of loose finite subgroups of~$Z$ whose irreducible components
dominate~$\FMod$. We conclude by~\ref{le:afinite} that his family is trivial,
of the form~$B'\times \FMod$. Hence~$\FFam = \inv f(B') \times \FMod$ is also a
trivial family.
\end{proof}


\subsubsection{Universal family for~$\SL_2$}

We proceed in two steps, analysing first the one-dimensional subgroups and
second the two-dimensional subgroups. We let~$G = \SL_2$.

\paragraph{One-dimensional subgroups}
The operation of~$G$ on~$\GP(\Gg) =
\SGrs$ has two orbits: the closed orbit is the set of nilpotent elements
in~$\GP(\Gg)$ and the open one is the set of semi-simple elements. Thus
each one-dimensional Lie subalgebra of~$\Gg$ is algebraic: $\SAlg_1 =
\GP(\Gg)$ and the universal family of one-dimensional subgroups of~$G$
is a smooth irreducible variety of dimension~$3$.

\paragraph{Two dimensional subgroups}
A~2-dimensional subgroup of~$G$
is a Borel subgroup~$B$. According to the theory of reductive groups, their
set~$\SGrp$ is the variety~$G/B\simeq\P^1$. The universal family of
2-dimensional subgroups of~$G$ is the
saturation with respect to the operation of~$G$ of the set~$\set{ (B, g) \mid
g \in B}$ in~$G/B \times G$. This is a smooth irreducible algebraic variety of
dimension~$3$.


\subsubsection{Universal family of one-dimensional subgroups of~$\SL_3$}
\label{ss:exsl3k1}

This example demonstrates the hairy structure of the moduli space, while
remaining tractable. We let~$G = \SL_3$ and describe the moduli space of
one-dimensional subgroups of~$G$.

Our first task is to describe~$\SAlg_1$ as a set, to do this we take
advantage of the operation of~$G$ on~$\SLie = \GP(\Gg)$. It follows
from the Chevalley-Jordan decomposition that the tangent space to a
one-dimensional subgroup of~$G$ is either spanned by a semi-simple
element of~$\Gg$ or by a nilpotent element. The group~$G$ has two
nilpotent orbits~$\set{\Nilpotent^6, \Nilpotent^3}$ in~$\GP(\Gg)$
whose dimensions are respectively~$6$ and~$3$, each of them is
included in~$\SAlg$. In order to parametrize the semi-simple orbits
of~$G$ in~$\SAlg$, we introduce a torus~$S$ of~$G$
and~$\SAlg(S)_1 \subset \SAlg_1$ the countable set of the algebraic Lie
1-dimensional subalgebras of~$\Lie(S)$. Since any semi-simple element
in~$\GP(\Gg)$ is~$G$-conjugated to an element of~$\Lie(S)$, we may
conclude that 
\[
\set{ Gp \mid p \in \SAlg(S) } \cup \set{\smash{\Nilpotent^6,\Nilpotent^3}}
\]
is the partition of~$\SAlg_1$ in~$G$-orbits. An irreducible subvariety
of~$\SLie_1$ contained in~$\SAlg_1$.


\appendix
\newcommand\an[1]{#1^{\mathrm{an}}}
\newcommand\Sheaf{\mathcal{F}}

\section{Observation of flatness through a biholomorphism}
\label{se:flat}

Let~$X$ be an algebraic variety and~$\Reg_X$ its structural sheaf. We denote
by~$\an X$ the corresponding analytic variety and by~$\an\Reg_X$ the sheaf of
holomorphic functions on~$\an X$. Note that~$X$ is endowed with its Zariski
topology while~$\an X$ is with its euclidean one. We denote by~$\phi$ the
natural map~$\an X \to X$. It is is a morphism of ringed spaces, that enables
us to define the analytic extension~$\an\Sheaf$ of a sheaf~$\Sheaf$
of~$\Reg_X$-modules by
\[
\an\Sheaf = \inv\phi \Sheaf \otimes_{\inv\phi\Reg_X} \an\Reg_X
,
\]
this is a sheaf of~$\an\Reg_X$-modules. Serre~\cite{GAGA} introduced the notion
of \emph{flat pair} of rings~(\ibid\ définition~4). The following statement
results immediately from~\cite[Corollaire~1, p.~11]{GAGA}
and~\cite[Proposition~22, p.~36]{GAGA}.

\begin{proposition}
\label{pr:flatpair}
For all sheaf of~$\Reg_X$-module $\Sheaf$ and all~$x\in X$, the natural map~$\Sheaf_x\to
\an\Sheaf_x$ is injective.
\end{proposition}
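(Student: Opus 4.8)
The plan is to reduce the assertion to a local statement about the stalks and then to quote Serre's theory of flat pairs. First I would unwind the definition of the analytification. Since
\[
\an\Sheaf = \inv\phi\Sheaf \otimes_{\inv\phi\Reg_X} \an\Reg_X
\]
and the formation of stalks commutes with inverse images and with tensor products, taking the stalk at~$x$ gives
\[
\an\Sheaf_x = \Sheaf_x \otimes_{\Reg_{X,x}} \an\Reg_{X,x},
\]
where~$\Reg_{X,x}$ and~$\an\Reg_{X,x}$ denote respectively the algebraic and the analytic local ring at~$x$. Under this identification the map of the statement is the base-change map~$m\mapsto m\otimes 1$.

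Next I would invoke the two cited results. By~\cite[Corollaire~1, p.~11]{GAGA} the couple~$(\Reg_{X,x},\an\Reg_{X,x})$ is a flat pair, that is,~$\an\Reg_{X,x}$ is flat over~$\Reg_{X,x}$; since the structural map~$\Reg_{X,x}\to\an\Reg_{X,x}$ is moreover a homomorphism of local rings, this flatness is automatically faithful. The injectivity of~$m\mapsto m\otimes1$ is then exactly the conclusion of~\cite[Proposition~22, p.~36]{GAGA}, applied to the module~$\Sheaf_x$.

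The last step involves no real difficulty and can also be argued by hand: a faithfully flat extension~$A\to B$ is universally injective, in the sense that for every~$A$-module~$M$ the unit~$M\to M\otimes_A B$ is injective. Writing~$K$ for its kernel and tensoring with the flat module~$B$, one identifies~$K\otimes_A B$ with the kernel of the base change~$M\otimes_A B\to(M\otimes_A B)\otimes_A B$; but this base change is split injective, a retraction being furnished by the~$B$-module structure of~$M\otimes_A B$, so~$K\otimes_A B=0$ and faithful flatness forces~$K=0$. Applying this with~$A=\Reg_{X,x}$, $B=\an\Reg_{X,x}$ and~$M=\Sheaf_x$ concludes the proof. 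I expect no genuine obstacle here: the whole substance lies in Serre's flatness result for the analytic local ring, and the only point requiring care is the identification of~$\an\Sheaf_x$ with the base change of~$\Sheaf_x$ along~$\Reg_{X,x}\to\an\Reg_{X,x}$, which is immediate from the definition of~$\an\Sheaf$.
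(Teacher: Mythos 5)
Your proof is correct and takes essentially the same route as the paper, which disposes of the statement by citing Serre's GAGA: you merely make explicit the identification of~$\an\Sheaf_x$ with~$\Sheaf_x\otimes_{(\Reg_X)_x}(\an\Reg_X)_x$ and the fact that the flat local homomorphism~$(\Reg_X)_x\to(\an\Reg_X)_x$ is automatically faithfully flat, hence universally injective. The exact division of labour between Serre's Corollaire~1 and Proposition~22 may differ from what you assign, but the content invoked is the same, and your hand-made verification of universal injectivity is a correct, welcome supplement.
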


We use it to compare flatness of~$\Sheaf$ and~$\an\Sheaf$ through a
biholomorphism.

\begin{proposition}
Let~$f:X\to\FMod$ a morphism of algebraic variety and~$\Sheaf$ a sheaf
of~$\Reg_X$-modules on~$X$. For all~$x$, $\Sheaf_x$ is a flat~$(\Reg_\FMod)
_x$-module if, and only if, $\an\Sheaf_x$ is a
flat~$(\an\Reg_\FMod)_x$-module.
\end{proposition}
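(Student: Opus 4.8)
The plan is to reduce the statement to commutative algebra of stalks and to exploit that analytification is faithfully flat on local rings. Write $y=f(x)$ and set $A=(\Reg_\FMod)_x$, $A'=(\an\Reg_\FMod)_x$ for the algebraic and analytic local rings of $\FMod$ at $y$, and $B=\Reg_{X,x}$, $B'=\an\Reg_{X,x}$ for those of $X$ at $x$. The morphism $f$ induces $A\to B$ and $A'\to B'$, and these fit into a commutative square together with the analytification maps $A\to A'$ and $B\to B'$. Put $M=\Sheaf_x$, so that by the very definition of $\an\Sheaf$ one has $M'=\an\Sheaf_x=M\otimes_B B'$. Serre's results recalled above show that $A\to A'$ and $B\to B'$ are flat; being local homomorphisms of local rings they are in fact faithfully flat, and Proposition~\ref{pr:flatpair} records the resulting injectivity $M\to M'$. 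I must prove that $M$ is flat over $A$ if and only if $M'$ is flat over $A'$.

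For the implication ``$M'$ flat $\Rightarrow$ $M$ flat'' I would argue purely formally. If $M'$ is flat over $A'$, then it is flat over $A$ as well, since $A'$ is flat over $A$ and a flat module over a flat algebra is flat over the base. Moreover the canonical map $M\to M'=M\otimes_B B'$ is an $A$-pure injection: for every $A$-module $L$ the induced map $L\otimes_A M\to L\otimes_A M'=(L\otimes_A M)\otimes_B B'$ is the base change of the $B$-module $L\otimes_A M$ along the faithfully flat $B\to B'$, hence injective. A pure $A$-submodule of a flat $A$-module is flat, so $M$ is flat over $A$.

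For the reverse implication ``$M$ flat $\Rightarrow$ $M'$ flat'' I would use faithfully flat base change along $A\to A'$. Since $M$ is flat over $A$, the module $M\otimes_A A'$ is flat over $A'$. A direct computation identifies $M'$ with a further base change, namely $M'=M\otimes_B B'=(M\otimes_A A')\otimes_{B\otimes_A A'}B'$, where the $(B\otimes_A A')$-module $M\otimes_A A'$ is flat over $A'$. Tensoring an $A'$-flat $(B\otimes_A A')$-module with $B'$ over $B\otimes_A A'$ preserves $A'$-flatness as soon as $B'$ is flat over $B\otimes_A A'$: for an injection $L_1\hookrightarrow L_2$ of $A'$-modules the map $L_1\otimes_{A'}(M\otimes_A A')\to L_2\otimes_{A'}(M\otimes_A A')$ is injective and is a morphism of $(B\otimes_A A')$-modules, so it stays injective after $\otimes_{B\otimes_A A'}B'$, which is exactly $L_1\otimes_{A'}M'\to L_2\otimes_{A'}M'$. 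This yields $M'$ flat over $A'$.

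The hard part is precisely this last flatness of $B'=\an\Reg_{X,x}$ over $B\otimes_A A'=\Reg_{X,x}\otimes_{\Reg_{\FMod,y}}\an\Reg_{\FMod,y}$, which expresses that analytification is compatible with the morphism $f$ and constitutes the genuine GAGA content of the statement; I expect to extract it from the flat-couple property of Serre~\cite{GAGA} (Corollaire~1, p.~11 and Proposition~22, p.~36) already invoked for Proposition~\ref{pr:flatpair}. Everything else—faithful flatness of the analytification maps, the purity argument, and ordinary faithfully flat base change—is formal once this relative flatness is in hand.
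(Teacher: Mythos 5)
Your proof of the implication ``$\mathcal{F}^{\mathrm{an}}_x$ flat $\Rightarrow$ $\mathcal{F}_x$ flat'' is complete and correct, and it takes a genuinely different route from the paper's. The paper checks the finitely-generated-ideal criterion directly: a nonzero element of $\ker(\mathfrak{a}\otimes_A M\to M)$ is transported to the analytic side and asserted to stay nonzero by the injectivity of Proposition~\ref{pr:flatpair}. You instead upgrade that injectivity to $A$-purity of $M\to M'=M\otimes_B B'$, using faithful flatness of $B\to B'$, and conclude with ``a pure submodule of a flat module is flat.'' Your variant is in fact tighter: what the paper's step really requires is injectivity of $\mathfrak{a}\otimes_A M\to\mathfrak{a}\otimes_A M'$, which is exactly the purity you establish, not merely the injectivity of $M\to M'$ recorded in~\ref{pr:flatpair}; so on this direction you have supplied a detail the paper leaves implicit.

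In the converse direction you have diagnosed the crux correctly but left it unproven, and this is a genuine gap as the proposal stands: the flatness of $B'=\mathcal{O}^{\mathrm{an}}_{X,x}$ over $C=B\otimes_A A'$ does \emph{not} follow formally from the two cited statements of Serre~\cite{GAGA}, which concern only the absolute couples $(A,A')$ and $(B,B')$. (Be aware that the paper's own one-line converse---``a flat module remains flat after base extension''---elides the same point: base change along $A\to A'$ produces $M\otimes_A A'$, not $M'=M\otimes_B B'$, and passing from the $A'$-flatness of the former to that of the latter is precisely your missing lemma.) The lemma is true, but the proof goes through the completion comparison that underlies Serre's Proposition~22 rather than through its statement. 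Since the algebraic and analytic local rings have the same completion, one has $\mathfrak{m}_{A'}=\mathfrak{m}_A A'$ and $A/\mathfrak{m}_A^k\simeq A'/\mathfrak{m}_{A'}^k$, whence $C/\mathfrak{m}_B^kC\simeq (B/\mathfrak{m}_B^k)\otimes_{A/\mathfrak{m}_A^k}(A'/\mathfrak{m}_A^kA')\simeq B/\mathfrak{m}_B^k$ for all~$k$. As $B$ is essentially of finite type over $A$, the ring $C$ is Noetherian, and its localization $C_{\mathfrak{n}}$ at the maximal ideal $\mathfrak{n}$ lying under $\mathfrak{m}_{B'}$ is a Noetherian local ring whose completion is $\widehat{B}\simeq\widehat{B'}$. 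A local homomorphism of Noetherian local rings inducing an isomorphism on completions is flat: $\widehat{B'}=\widehat{C_{\mathfrak{n}}}$ is flat over $C_{\mathfrak{n}}$, and $B'\to\widehat{B'}$ is faithfully flat, so kernels created over $C_{\mathfrak{n}}$ would survive the completion, and $B'$ is flat over $C_{\mathfrak{n}}$, hence over $C$. Once this argument is added, your proof is complete---and, in the first direction, more careful than the paper's.
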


\begin{proof}
Recall that, when~$A$ is a ring, an~$A$-module $M$ is flat if, and only if,
for any finitely generated ideal~$\Ga$ of~$A$ the natural map~$\Ga\otimes M
\to M$ is injective. 

If~$\Sheaf_x$ is not a flat~$(\Reg_\FMod)_x$-module, the kernel of the
map~$(\Reg_\FMod)_x \otimes \Sheaf_x \to \Sheaf_x$ contains a non-zero
element. This element also belongs to the kernel of the
map~$(\an\Reg_\FMod)_x \otimes \an\Sheaf_x \to \an\Sheaf_x$ and is
non-zero by~\ref{pr:flatpair}, so~$\an\Sheaf_x$ is not a
flat~$(\an\Reg_\FMod)_x$-module.

The converse assertion is implied by the fact that a flat module
remains flat after base extension~\cite[9.1A]{H}.
\end{proof}

\begin{corollary}
\label{co:appflat}
Let~$\FMod$ be a variety and for~$i\in\set{0,1}$ a morphism~$f_i:X_i\to\FMod$,
a sheaf~$\Sheaf_i$ of~$\Reg_{X_i}$-modules, and a point~$x_i\in X_i$. If there
is a local biholomorphism~$\psi: W_0 \to W_1$ mapping a
neighbourhood~$W_0$ of~$x_0$ in~$X_0$ to a neighbourhood~$W_1$ of~$x_1$
in~$X_1$ and such that~$\left.f_1\right\vert_{W_1} = \psi\circ
\left.f_1\right\vert_{W_1}$ and~$\psi^*\left.\an\Sheaf_0\right\vert_{W_0} =
\left.\an\Sheaf_1\right\vert_{W_1}$, then~$\Sheaf_0$ is flat at~$x_0$ if, and
only if,~$\Sheaf_1$ is flat at~$x_1$.
\end{corollary}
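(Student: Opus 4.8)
The plan is to transfer the whole question to the analytic local rings, where the biholomorphism~$\psi$ acts directly. First I would apply the Proposition just proved to each index~$i$: flatness of~$\Sheaf_i$ at~$x_i$ means flatness of the stalk~$(\Sheaf_i)_{x_i}$ over~$(\Reg_\FMod)_{f_i(x_i)}$, and that Proposition rewrites this as flatness of the analytic stalk~$(\an\Sheaf_i)_{x_i}$ over~$(\an\Reg_\FMod)_{f_i(x_i)}$. So it suffices to show that the two analytic flatness conditions are equivalent.

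The compatibility hypothesis~$\restrict{W_0}{f_0} = \restrict{W_1}{f_1}\circ\psi$, together with~$\psi(x_0) = x_1$, gives~$f_0(x_0) = f_1(x_1)$; write~$y\in\FMod$ for this common image. Consequently both analytic stalks~$(\an\Sheaf_0)_{x_0}$ and~$(\an\Sheaf_1)_{x_1}$ are modules over one and the same local ring~$(\an\Reg_\FMod)_y$, the module structures coming from~$f_0$ and~$f_1$ respectively.

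The core of the argument is that~$\psi$ matches these two module structures. As a biholomorphism it induces, by pullback of germs of holomorphic functions, an isomorphism of analytic local rings~$(\an\Reg_{X_1})_{x_1}\to(\an\Reg_{X_0})_{x_0}$; the relation~$f_0 = f_1\circ\psi$ promotes this to an isomorphism of~$(\an\Reg_\FMod)_y$-algebras, and the hypothesis that~$\psi$ carries~$\an\Sheaf_0$ to~$\an\Sheaf_1$ near the two points transports the stalk~$(\an\Sheaf_1)_{x_1}$ onto~$(\an\Sheaf_0)_{x_0}$ compatibly with this identification. In particular the two analytic stalks are isomorphic as~$(\an\Reg_\FMod)_y$-modules. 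Since isomorphic modules over a fixed ring are simultaneously flat or not flat, the stalk~$(\an\Sheaf_0)_{x_0}$ is flat over~$(\an\Reg_\FMod)_y$ if, and only if,~$(\an\Sheaf_1)_{x_1}$ is. Feeding this back through the first step gives the corollary.

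I expect the only delicate point to be the bookkeeping of the various pullbacks: one must check that the module action of~$(\an\Reg_\FMod)_y$ really is intertwined by~$\psi$, which is precisely what the compatibility of~$\psi$ with the two maps down to~$\FMod$ guarantees. Once this intertwining is in place the equivalence of the two analytic flatness conditions is purely formal, flatness being an invariant of the isomorphism class of a module over a fixed local ring.
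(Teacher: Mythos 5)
Your argument is correct and is exactly the intended one: the paper leaves this corollary without an explicit proof, as it follows immediately from the preceding Proposition (reducing flatness to analytic flatness) together with transport of structure along~$\psi$, which is precisely your two-step reduction. You also rightly read the compatibility hypothesis as $\restrict{W_0}{f_0} = \restrict{W_1}{f_1}\circ\psi$ (the statement contains a typo) and correctly identify the intertwining of the $(\an\Reg_\FMod)_y$-module structures as the only point needing care.
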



\section{Construction of certain compactifications}

\begin{proposition}
\label{pr:rextension}
Let~$X$ be a variety, $G$ be an algebraic group and~$L$ be a subgroup of~$G$
operating on~$X$. There exists a $G$-variety~$G \times_L X$ and a
closed immersion of~$L$-varieties~$j:X \to G \times_L X$ such that,
for all~$G$-variety~$Y$ and all immersion of~$L$-varieties~$j_Y: X \to
Y$, there is a unique morphism of~$G$-varieties~$f: G \times_L X \to
Y$ such that~$j = f \circ j_Y$.

Furthermore, there is a~$G$-invariant projection~$G \times_L X \to
G/L$ whose fibre above the co-set~$L$ is~$j(X)$, and this projection is proper
if, and only if,~$X$ is complete.
\end{proposition}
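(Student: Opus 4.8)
The plan is to realise $G\times_L X$ as the quotient of the product $G\times X$ by a free action of $L$, following the standard associated-bundle construction. First I would equip $G\times X$ with the $L$-action $\ell\cdot(g,x)=(g\ell^{-1},\ell x)$; this action is free, because $L$ already acts freely on $G$ by right translation, and it commutes with the left-translation action $g'\cdot(g,x)=(g'g,x)$ of $G$ on the first factor. The first projection $G\times X\to G$ is $L$-equivariant over the $L$-torsor $G\to G/L$, so composing with $G\to G/L$ yields an $L$-invariant map $G\times X\to G/L$ that descends to the desired $G$-equivariant projection $p\colon G\times_L X\to G/L$. I set $j\colon X\to G\times_L X$, $x\mapsto[(e,x)]$. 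A short computation with the $L$-action shows $[(\ell,x)]=[(e,\ell x)]$, so the image $j(X)$ is exactly the fibre $p^{-1}(eL)$, which identifies $X$ with the fibre over the coset $L$, as required by the \emph{furthermore} clause.

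The main obstacle is showing that the geometric quotient $(G\times X)/L$ exists as a variety. I would argue through local triviality of $G\to G/L$: since $G$ is smooth over $G/L$ and the map is a faithfully flat $L$-torsor, it admits sections over an étale cover $\set{U_i\to G/L}$. Two sections $s_i,s_j$ satisfy $s_i=s_j\,g_{ij}$ for a transition cocycle $g_{ij}\colon U_i\times_{G/L}U_j\to L$, and gluing the trivial pieces $U_i\times X$ by the isomorphisms $(u,x)\mapsto(u,g_{ij}(u)^{-1}x)$ over the overlaps produces an object $E\to G/L$ with fibre $X$. Effectivity of this descent—that the glued object is genuinely a variety—is where the technical content lies; it holds because $G/L$ is quasi-projective by Chevalley's theorem and the gluing data take values in the group $L$ acting algebraically on $X$, so one may invoke faithfully flat descent for the quasi-projective morphism $G\times X\to G$. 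This $E$ is, by construction, $G\times_L X$, carrying the $G$-action and the projection $p$ described above, and $j$ is a closed immersion because it identifies $X$ with the closed fibre $p^{-1}(eL)$.

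For the universal property I would proceed as follows. Given a $G$-variety $Y$ and an $L$-immersion $j_Y\colon X\to Y$, define $\tilde f\colon G\times X\to Y$ by $\tilde f(g,x)=g\cdot j_Y(x)$. Since $j_Y$ is $L$-equivariant, $\tilde f(g\ell^{-1},\ell x)=g\ell^{-1}\cdot\ell\cdot j_Y(x)=g\cdot j_Y(x)=\tilde f(g,x)$, so $\tilde f$ is $L$-invariant and factors through a morphism $f\colon G\times_L X\to Y$. By construction $f$ is $G$-equivariant and $f\circ j=j_Y$. Uniqueness is immediate: any $G$-morphism agreeing with $j_Y$ on $j(X)$ is forced on every $G\cdot[(e,x)]=\set{[(g,x)]\mid g\in G}$, and these classes exhaust $G\times_L X$.

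Finally, properness of $p$ is local on the base $G/L$ and may be tested after the étale cover $U_i\to G/L$, over which $E$ restricts to the trivial bundle $U_i\times X\to U_i$. By base change, $U_i\times X\to U_i$ is proper if and only if $X\to\{\mathrm{pt}\}$ is proper, i.e. if and only if $X$ is complete (restricting to a point of $U_i$ recovers $X$ in the converse direction). Hence $p$ is proper exactly when $X$ is complete, which is the remaining assertion.
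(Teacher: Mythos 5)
Your construction is the same as the paper's: the paper also forms the quotient of $G\times X$ by the free action $\ell\cdot(g,x)=(g\ell^{-1},\ell x)$, but it disposes of the existence question in one line by citing Fogarty's theorem that geometric quotients of free actions exist, and then asserts that this quotient fulfils all the conclusions. Your verifications of the universal property (factoring $\tilde f(g,x)=g\cdot j_Y(x)$ through the quotient, with uniqueness forced by $G$-equivariance on the classes $[(g,x)]$) and of the properness criterion (properness is \'etale-local on $G/L$, reducing to the trivial bundle $U_i\times X\to U_i$) are correct, and in fact supply details the paper leaves entirely implicit. Note also that the statement's condition ``$j=f\circ j_Y$'' is a typo for $j_Y=f\circ j$, which is what you correctly prove.

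The one genuine gap is exactly where you locate the technical content: effectivity of the descent. \'Etale (or faithfully flat) descent of schemes is \emph{not} effective in general; the standard sufficient condition is that the object being descended be quasi-affine over the base, or quasi-projective together with a descent datum on a relatively ample line bundle. This is a condition on the \emph{fibre} side, not on the base, so your appeal to quasi-projectivity of $G/L$ does no work, and ``the gluing data take values in $L$ acting algebraically on $X$'' does not rescue it: unless the $L$-action on $X$ linearizes in an ample line bundle, the cocycle $g_{ij}$ need not act on any projective embedding of $X$, and there are classical examples, in the spirit of Hironaka's, of \'etale-locally trivial fibre bundles with complete fibre whose total space is not a scheme. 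So for $X$ an arbitrary variety, as in the statement, your argument does not close. To repair it, either assume $X$ quasi-projective with an $L$-linearized ample bundle --- which covers the paper's only application, where $X=\P(E)$ with a linear $L$-action --- or do as the paper does and invoke an existence theorem for geometric quotients of free actions outright.
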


\begin{proof}
Let~$L$ operate on~$G \times X$ by~$l \cdot (g,x) = (g\inv l, l
x)$. This is a free action and the geometric quotient~$G \times_L X$
exists~\cite{FOGARTY}. It fulfills the conclusions of the proposition.
\end{proof}

\begin{corollary}
\label{co:appcomp1}
When~$X$ and~$G / L$ are complete, so is~$G \times_L X$.
\end{corollary}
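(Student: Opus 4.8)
The plan is to realise the completeness of $G\times_L X$ as properness of its structure morphism to a point, factoring that morphism through the projection to $G/L$ furnished by Proposition~\ref{pr:rextension}. First I would invoke the final clause of that proposition: since $X$ is complete by hypothesis, the $G$-invariant projection $\pi: G\times_L X \to G/L$ is proper. This is the only step in which the completeness of~$X$ is used, and it has already been established for us.

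Next I would bring in the second hypothesis. To say that $G/L$ is complete is precisely to say that its structure morphism to a point is proper. The structure morphism of $G\times_L X$ factors as the composite of $\pi$ with the structure morphism of $G/L$, so it is a composition of two proper morphisms.

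Finally, since properness is stable under composition, this composite is itself proper, which is exactly the statement that $G\times_L X$ is complete. The argument is thus purely formal once Proposition~\ref{pr:rextension} is available: there is no genuine obstacle, the only point worth noticing being the division of labour, with the completeness of~$X$ making $\pi$ proper and the completeness of~$G/L$ taking care of the base. If one wishes to be scrupulous, one should also remark that $G\times_L X$ is separated and of finite type, so that ``complete'' and ``proper over a point'' coincide; but both properties follow from its construction as a geometric quotient of $G\times X$ by a free action of~$L$.
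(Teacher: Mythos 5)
Your proposal is correct and is precisely the argument the paper intends: the corollary is stated without a separate proof because it follows immediately from the final clause of Proposition~\ref{pr:rextension} (properness of $G\times_L X \to G/L$ when $X$ is complete) composed with the properness of $G/L$ over a point. Your additional remark on separatedness and finite type is a harmless scruple already guaranteed by the geometric quotient construction.
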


\begin{corollary}
\label{co:appcomp2}
If~$\Orbit_1$ and~$\Orbit_2$ are two distinct orbits of~$L$ in~$X$,
then~$G\Orbit_1$ and~$G\Orbit_2$ are two distinct orbits of~$G$ in~$G\times_L
X$.
\end{corollary}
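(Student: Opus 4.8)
The plan is to exploit the $G$-equivariant projection $\pi\colon G\times_L X\to G/L$ furnished by Proposition~\ref{pr:rextension}, whose fibre over the base coset $L$ is $j(X)$, together with the transitivity of $G$ on $G/L$. The statement reduces to the general principle that, for a $G$-equivariant morphism onto the homogeneous space $G/L$, the $G$-orbits of the source are in bijection with the $L$-orbits of the fibre over the base point, the bijection being realised by intersecting a $G$-orbit with that fibre.

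First I would record that the stabiliser of the coset $L$ in $G$ is $L$ itself, and that $\pi$ is $G$-equivariant. Since $j$ is a closed immersion of $L$-varieties, the $L$-action on the fibre $\pi^{-1}(L)=j(X)$ is transported by $j$ to the original $L$-action on $X$; in particular $j(\Orbit_1)$ and $j(\Orbit_2)$ are two distinct $L$-orbits lying inside this fibre, and by definition $G\Orbit_i=G\cdot j(\Orbit_i)$.

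Next, because $G/L$ is a single $G$-orbit, every $G$-orbit $\Omega$ of $G\times_L X$ has $\pi(\Omega)=G/L$ and therefore meets $\pi^{-1}(L)=j(X)$. The key verification is that $\Omega\cap j(X)$ is a single $L$-orbit: it is $L$-stable, because $L$ fixes the coset $L$ while preserving $\Omega$; and if $y,y'\in\Omega\cap j(X)$ then $y'=gy$ for some $g\in G$, whence $gL=\pi(gy)=\pi(y')=L$ forces $g\in L$, so $y$ and $y'$ lie in one $L$-orbit. This yields the announced bijection between the $G$-orbits of $G\times_L X$ and the $L$-orbits of $j(X)\cong X$, sending $\Orbit_i$ to $G\Orbit_i$.

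Finally, distinct $G$-orbits are disjoint, so $j(\Orbit_1)$ and $j(\Orbit_2)$, being distinct $L$-orbits, cannot meet a common $G$-orbit; equivalently, $G\Orbit_1=G\Orbit_2$ would force, through the computation above, the identity $\Orbit_1=\Orbit_2$, against the hypothesis. Hence $G\Orbit_1$ and $G\Orbit_2$ are distinct. I expect no genuine obstacle here: the only delicate points are the stabiliser identity and the verification that a $G$-orbit meets the base fibre in a single $L$-orbit, both of which are immediate once the equivariant projection of Proposition~\ref{pr:rextension} is available. One could equally avoid the general principle and argue directly, unwinding the quotient $(G\times X)/L$ to produce the element $l\in L$ carrying a chosen representative of $\Orbit_1$ to one of $\Orbit_2$.
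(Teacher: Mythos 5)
Your proof is correct and takes essentially the same route as the paper: both arguments hinge on the equivariant projection $G\times_L X \to G/L$ of Proposition~\ref{pr:rextension}, which forces any $g\in G$ carrying one point of the fibre $j(X)$ over the base coset to another to satisfy $gL = L$, hence $g\in L$, so that distinct $L$-orbits in $X$ cannot merge into one $G$-orbit. The paper states this in three lines; your additional scaffolding (the bijection between $G$-orbits of $G\times_L X$ and $L$-orbits of the fibre) is a harmless elaboration of the same mechanism.
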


\begin{proof}
Let~$x_1$ and~$x_2$ two points in~$X$ and assume that $Gx_1$ and~$Gx_2$ have a
common point. These two orbits are then equal, and there is~$g\in G$
sending~$x_1$ to~$x_2$. Using the projection~$G\times_L X \to G/L$, one sees
that~$g$ belongs to~$L$, so that~$Lx_1 = L x_2$.
\end{proof}

Let~$G$ be an algebraic group, by~Chevalley structure Theorem~\cite{CONRAD},
there is an exact sequence of algebraic groups
\[
1 \to L \to G \labelledto\alpha A \to 0
\]
where~$L$ is an affine group and~$A$ an abelian variety. The group~$L$ is the
maximal affine subgroup of~$G$. Remark that a subgroup~$H$ of~$G$ is connected
if, and only if,~$\alpha(H)$ and~$H \cap L$ are connected.

\begin{proposition}
\label{pr:compactification}
Let~$G$ be an algebraic group and~$H$ a subgroup of~$G$. There is a
projective completion~$K$ of~$G$ such that the connected components of the
closure of~$H$ in~$K$ are the closures of the connected components of~$H$.
\end{proposition}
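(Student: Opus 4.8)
The plan is to realise $K$ as an induced space $G\times_L X$, built from the Chevalley structure sequence $1\to L\to G\to A\to 1$ with $\alpha\colon G\to A$, $L$ the maximal affine subgroup and $A$ an abelian variety, together with a representation of $L$ that pins down $H^o\cap L$ as a line-stabiliser. First I would set up the affine separating datum. The group $N=H^o\cap L$ is a closed connected subgroup of the affine group $L$, normal in $H^o$ because $L$ is normal in $G$; moreover $H^o/N\cong\alpha(H^o)=\alpha(H)^o$ is a connected subgroup of $A$, hence an abelian variety, in particular complete. By Chevalley's theorem on linear algebraic groups \cite[5.1]{BOREL} there is a finite-dimensional representation $V$ of $L$ and a line $[v]\in\GP(V)$ whose stabiliser $L_{[v]}$ in $L$ is \emph{exactly} $N$. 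Taking $X$ to be the closure of the $L$-stable graph of $\ell\mapsto(\ell,[\ell v])$ inside $K_0\times\GP(V)$, for $K_0$ any $L$-equivariant projective compactification of $L$, yields a projective $L$-equivariant compactification of $L$ carrying an $L$-morphism $\rho\colon X\to\GP(V)$ that extends the orbit map.

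Next I would assemble the compactification. Using \ref{pr:rextension} I form $K=G\times_L X$; since $X$ and $G/L=A$ are complete, $K$ is complete by \ref{co:appcomp1}, and a linearisation of the $L$-action on the projective $X$ makes $K$ projective. As $L\subset X$ is open and $L$-stable, $G=G\times_L L$ is an open subvariety of $K$, so $K$ is a projective completion of $G$. Inducing $\rho$ gives a $G$-morphism $\bar\rho\colon K\to Y:=G\times_L\GP(V)$, where $Y=\GP(\mathcal{E})$ is the projectivisation of the vector bundle $\mathcal{E}=G\times_L V$ over the abelian variety $A$, hence a projective, separated $G$-variety; the composite $G\hookrightarrow K\xrightarrow{\bar\rho}Y$ sends $g$ to the class $[g,[v]]$.

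Finally I would read off the separation. Two elements $g,g'\in G$ satisfy $[g,[v]]=[g',[v]]$ in $Y$ precisely when $g^{-1}g'\in L_{[v]}=N$; since $N\subseteq H^o$, this map is constant on each component $hH^o$ and injective on the set of components of $H$. Thus the image in $Y$ of a component $hH^o$ is isomorphic to $H^o/N\cong\alpha(H)^o$, an abelian variety, hence a complete and therefore \emph{closed} subvariety of the separated variety $Y$, and the images of distinct components are disjoint. Because $\bar\rho$ is a morphism into a separated variety, the closure in $K$ of each component maps into its closed image, so the closures of distinct components of $H$ are pairwise disjoint. The connected components of $\bar H$ in $K$ are therefore exactly the closures of the connected components of $H$, as required.

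I expect the main obstacle to lie in the construction of $K$ rather than in the final separation: one must exhibit a projective $L$-equivariant compactification of $L$ carrying the orbit map to $\GP(V)$, verify that $G\times_L X$ is genuinely projective and not merely complete, and check that $Y=G\times_L\GP(V)$ is a separated projective $G$-variety in which complete subvarieties are closed. This is, in effect, the single-parameter instance of the family statement \ref{pr:fcompact}; the reason the behaviour at infinity is controllable is precisely that the components are carried to disjoint \emph{complete} subvarieties of $Y$, which forces their closures apart.
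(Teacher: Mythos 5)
Your proposal is correct and follows essentially the same route as the paper: both rest on the Chevalley structure sequence $1\to L\to G\to A\to 0$, Chevalley's line-stabiliser theorem applied to the affine part (the paper pins down $M^o=(H\cap L)^o$ where you use $N=H^o\cap L$; both are legitimate, and your choice makes the orbit $H^o\cdot[v]\cong\alpha(H^o)$ visibly an abelian variety, where the paper must note that $H^o/M^o$ is one), the induced spaces of~\ref{pr:rextension}--\ref{co:appcomp2}, and the key mechanism that the components of~$H$ are carried into pairwise disjoint complete, hence closed, subvarieties, which forces their closures apart. The only substantive packaging difference is that the paper sidesteps your equivariant compactification $X$ of $L$ and the attendant projectivity worry for $G\times_L X$ by simply taking $K$ to be the closure of the graph $g\mapsto(g,gD)$ inside $K_0\times(G\times_L\P(E))$ with $K_0$ an \emph{arbitrary} projective completion of~$G$ (your gestures at linearisation can indeed be completed via a Segre-type embedding, so this is a gloss, not a gap); note also that your phrase that $g\mapsto[g,[v]]$ is ``constant on each component'' is a slip---it is constant only on cosets of~$N$---but the argument you actually run, namely that distinct components have disjoint closed images in the separated variety~$Y$, is the right one and matches the paper's.
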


\begin{proof}
Let~$L$ be the maximal affine subgroup of~$G$.
By Chevalley's Theorem on homogeneous spaces of affine
groups~\cite[5.1]{BOREL}, there is a linear representation~$E$ of~$L$
containing a line~$D$ whose stabilizer in~$L$ is the identity component~$M^o$
of~$M = L\cap H$. The orbit of~$D$ under the operation of~$M$ on~$\P(E)$ is a
finite subset~$F \simeq M/M^o$ of~$\P(E)$.

Let~$X = G \times_L \P(E)$ and~$K_0$ be any projective completion of~$G$. We
consider the morphism
\[
\fun\theta\: G \to K_0 \times X\\
g\mapsto (g,gD)
\\
\]
and claim that the closure~$K$ of~$\theta(G)$ in~$K_0 \times X$ is a suitable
compactification.

Assume that~$H$ has multiple connected components and let~$h_1 H^o$ and~$h_2
H^o$ be two of them, where~$\set{h_1,h_2}\subset H$. The connected
components~$h_i H^o D$ of the orbit~$H D$ are isomorphic to the abelian
variety~$H^o/M^o$, hence are complete varieties. They are furthermore
disjoint by~\ref{co:appcomp2}. We can
conclude that the connected components~$h_i H^o$ are mapped under~$\theta$
into the disjoint complete subvarieties~$K_0 \times h_i H^o D$ of~$K_0 \times
X$, so their closures are disjoint.
\end{proof}


\bibliographystyle{plain}
\bibliography{modgrp}
\end{document}